\theoremstyle{plain}
\newtheorem{lemma}{Lemma}[section]
\newtheorem{corollary}[lemma]{Corollary}
\newtheorem{theorem}[lemma]{Theorem}
\newtheorem{defandthm}[lemma]{Definition and Theorem}
\theoremstyle{definition}
\newtheorem{problem}[lemma]{Problem}
\newtheorem{remark}[lemma]{Remark}
\newtheorem{definition}[lemma]{Definition}
\newcommand{\fatx}{\bm{x}}
\newcommand{\faty}{\bm{y}}
\newcommand{\fatz}{\bm{z}}
\newcommand{\fatv}{\bm{v}}
\newcommand{\fatw}{\bm{w}}
\newcommand{\fatu}{\bm{u}}
\newcommand{\fate}{\bm{e}}
\newcommand{\fatF}{\bm{F}}
\newcommand{\fatn}{\bm{n}}
\newcommand{\fatD}[1]{\bm{D}(#1)}
\newcommand{\divx}{\mathrm{div}_{\hspace{-0.2mm}\fatx}}
\newcommand{\gradx}{\nabla_{\!\hspace{-0.2mm}\fatx}}
\newcommand{\pt}{\partial_t}
\newcommand{\dx}{\mathrm{d}\fatx}
\newcommand{\dt}{\mathrm{d}t}
\newcommand{\deltat}{\Delta t}
\newcommand{\dsx}{\mathrm{d}S_{\fatx}}
\newcommand{\gammadot}{\dot{\gamma}}
\newcommand{\into}{\int_{\Omega}}
\newcommand{\reals}{\mathbb{R}}
\newcommand{\naturals}{\mathbb{N}}
\newcommand{\tr}{\mathrm{tr}}
\DeclareMathOperator{\uM}{%
  \underline{\textit{M}\mkern-\uMskip}
  \mkern\postskip
}
\begin{document}

\title{Hybrid multiscale method for polymer melts: analysis and simulations}

\author{Ranajay Datta$^\dag$ \and Mária Luká\v{c}ová-Medvi\v{d}ová$^\ddag$
\and Andreas Schömer$^\ddag$ \and Peter Virnau$^\dag$
}

\date{\today}

\maketitle

\centerline{\dag\,Institute of Physics, Johannes Gutenberg University Mainz}
\centerline{Staudingerweg 9, 55128 Mainz, Germany}
\centerline{rdatta@uni-mainz.de, virnau@uni-mainz.de}

\medskip

\centerline{\ddag\,Institute of Mathematics, Johannes Gutenberg University Mainz}
\centerline{Staudingerweg 9, 55128 Mainz, Germany}
\centerline{lukacova@uni-mainz.de, anschoem@uni-mainz.de}

\bigskip

\begin{abstract}
    We model the flow behaviour of dense melts of flexible and semiflexible ring polymers in the presence of walls using a hybrid multiscale approach. Specifically, we perform molecular dynamics simulations and apply the Irving-Kirkwood formula to determine an averaged stress tensor for a macroscopic model. For the latter, we choose a Cahn-Hilliard-Navier-Stokes system with dynamic and no-slip boundary conditions. We present numerical simulations of the macroscopic flow that are based on a finite element method. In particular, we present detailed proofs of the solvability and the energy stability of our numerical scheme. Phase segregation under flow between flexible and semiflexible rings, as observed in the microscopic simulations, can be replicated in the macroscopic model by introducing effective attractive forces.
\end{abstract}
\textit{Keywords:} Cahn-Hilliard equation, Navier-Stokes equations, finite element methods, energy-stable numerical scheme, multiscale modelling, molecular dynamics, dense polymer melts 

\section{Introduction}
In this paper, we present a phenomenological description of the flow behaviour of dense blends of flexible and semiflexible ring polymers using a hybrid multiscale approach. In particular, we present a finite element method for our model and give detailed proofs for its solvability and energy stability. Our considerations are inspired by a molecular dynamics (MD) experiment presented in \citep[Section 4]{Datta_2023}. There, the authors found that a blend consisting of equal proportions of flexible and semiflexible ring polymers in a periodic box remains mixed in equilibrium - even after the introduction of walls at the bottom and the top of the box. However, when subjected to flow (with the walls still being present), the same blend started to phase segregate (see Section \ref{sec_microscopic} for some more details). Therefore, the authors concluded that in microfluidic devices subjection to flow can be a means of separating ring polymers with similar chemical properties according to stiffness. Previous studies involving mixtures of ring and linear polymers \cite{Weiss_Nikoubashman_2019}, star polymers and linear chains \cite{Srivastava_Nikoubashman_2018}, as well as a more recent investigation involving a dense binary blend of small flexible knotted and unknotted ring polymers \cite{Datta_Virnau_2025}, have similarly demonstrated that subjecting a blend to channel flow can induce segregation of the components.

To model the flow on macroscopic scales, we use a continuum model that incorporates rheological information from MD simulations.
More specifically, we apply the Irving-Kirkwood formula to determine an averaged stress tensor from MD. The viscosity in the resulting stress tensor depends on the shear rate $\gammadot$ and on the mixing ratio of the polymers involved. It is subsequently passed to a Cahn-Hilliard-Navier-Stokes system describing the macroscopic flow. A similar strategy has been pursued in \citep{Datta}, where the authors analysed the flow behaviour of melts of chain polymers of different stiffness. The results they present in \citep[Section 5]{Datta} demonstrate that the rheological information derived from MD has a decisive influence on the macroscopic flow. We note that in \citep{Datta} only melts of single polymer types were considered. Consequently, the macroscopic model in \citep{Datta} does not have to be able to capture segregation phenomena, since it consists of the (incompressible) Navier-Stokes equations only.

To capture the segregation phenomena observed in the MD experiment in \citep{Datta_2023} described above, we couple the Navier-Stokes equations to the Cahn-Hilliard equation. The resulting Cahn-Hilliard-Navier-Stokes system has been studied analytically and numerically, see, e.g., \citep{Boyer} for the existence and uniqueness of global weak solutions, \citep{Brunk} for a finite element scheme with order-optimal convergence rates and \cite{GuillenGonzalez} for structure-preserving finite element schemes. In fact, even the more general AGG model proposed by Abels, Garcke and Grün in \cite{AGG} has been studied extensively, see, e.g., \cite{ADG} and \cite{ADG2} for the existence of weak solutions, \cite{Giorgini} for the local-in-time existence of strong solutions in three space dimensions and \cite{AGGio} for a weak-strong uniqueness result as well as global-in-time existence of strong solutions in two space dimensions. Numerical schemes for the AGG model can be found in \cite{GuillenGonzalez2} and \cite{Grün}. We note that in all these papers the viscosity solely depends on the concentration variable $\phi$ (i.e. on the mixing ratio of the two components).

For an analysis of incompressible fluids with shear-dependent viscosity, including existence results for the corresponding governing equations, we refer to \cite{Blechta}. In this work, the reader can find references to other relevant works in this context such as \cite{Bulicek1, Bulicek2, Bulicek3}. Some rigorous numerical results in this direction are also available, see, e.g., \cite{Berselli}, where order-optimal error estimates for a numerical scheme are proven. In all of the five aforementioned works, single-component flows are considered. An existence result for a model describing non-Newtonian two-phase flow with a shear- and composition-dependent stress tensor can be found in \cite{Abels}.

In \cite{Abels}, Neumann boundary conditions are applied to the Cahn-Hilliard part of the model. To obtain the correct segregation behaviour in our situation, we apply dynamic boundary conditions to the Cahn-Hilliard equation. The exact form of our boundary conditions originates from the physics community, see, e.g., \citep{Fischer}. Mathematically, the Cahn-Hilliard equation with boundary conditions similar to ours has been studied, for instance, in \citep{Goldstein} and \citep{Petcu}. Also the AGG model has been studied with dynamic boundary conditions, see, e.g., \cite{Gal, Knopf} for the existence of weak solutions in this case.

The paper is organised as follows. In Section \ref{sec_microscopic}, we present the framework for the MD simulations and their outcomes. Section \ref{sec_macroscopic} is devoted to the numerical scheme for the Cahn-Hilliard-Navier-Stokes system, the proof of its properties and the presentation of numerical results including a convergence study. In Appendix \ref{sec_fitting}, we present the result of a least-squares fitting of the viscosity data obtained from the MD simulations to the Carreau-Yasuda model. Appendices \ref{sec_inequalities}--\ref{sec_schaefer} provide the mathematical tools necessary for the analysis of our numerical scheme in Section \ref{sec_macroscopic}.

\section{Microscopic model and simulation techniques}
\label{sec_microscopic}

In this study, oligomers are depicted as coarse-grained bead-spring chains, consistent with the framework developed by Kremer and Grest \cite{Kremer1990}. Interactions between bead pairs are modelled using the Weeks-Chandler-Andersen (WCA) potential \cite{Weeks_1971} which is repulsive in nature:
\begin{equation}
V_{\rm WCA}(r) = \left\{
\begin{array}{cl}
4\varepsilon \left[ \left( \dfrac{\sigma}{r} \right)^{12} - \left( \dfrac{\sigma}{r} \right)^6 + \dfrac{1}{4} \right] & \quad \text{if} \;\; r \leq 2^{1/6}\sigma, \\[4mm]
0 & \quad \text{if} \;\; r > 2^{1/6}\sigma,
\end{array}
\right.
\end{equation}
where \( \sigma \) denotes the diameter of the bead and \( \varepsilon \) the strength of the WCA interactions. These parameters are adopted as the units of length and energy, respectively, and all subsequent quantities are expressed in dimensionless units.

Along with the repulsive WCA potential, consecutive beads in a chain are additionally bonded via the finitely extensible non-linear elastic (FENE) potential \cite{FENE}:
\begin{equation}
V_{\rm FENE}(r) = -\frac{1}{2} K R^2 \ln \left[ 1 - \left( \frac{r}{R} \right)^2 \right],
\end{equation}
where the spring constant \( K \) is set to 30, and the maximum bond extension \( R \) is 1.5. 

To incorporate semiflexibility into the chains, a bending energy term is introduced as follows:
\begin{equation}
V_\theta = \kappa (1 + \cos \theta),
\end{equation}
where \( \theta \) is the angle formed by three consecutive beads, and \( \kappa \) is the stiffness parameter. This form of bending potential is inspired by the Kratky-Porod model \cite{Kratky_49, DoiEdwards, RubinsteinColby}, which is widely used for semiflexible polymers. For large values of \( \kappa \), the persistence length \( \ell_{\rm p} \) is approximately \( \ell_{\rm p}/\ell_{\rm b} \approx \kappa \), where the bond length \( \ell_{\rm b} \approx \sigma \).

Our dense oligomer melts are characterized by a number density $\rho=0.8$, and system sizes are $15^3$ unless otherwise specified. We perform non-equilibrium MD simulations using the LAMMPS \cite{Plimpton1995} package to shear the oligomer melts. Shear was imposed along the \( x \)-axis, the flow (f) direction, with the \( y \)- and \( z \)-axes being the gradient (g) and the vorticity (v) directions, respectively. In our setup, shear is introduced by overlaying a velocity gradient onto the particles' thermal velocities via the employment of the SLLOD equations \cite{Evans1984, Ladd1984, Tuckerman1997, Evans2008} of motion. These equations alter the equations of motion of the particles by the addition of a velocity component that varies with height, providing a standard microscopic framework for modelling shear. To maintain a constant temperature of \( T = 1 \), LAMMPS couples the SLLOD equations with a Nos\'{e}-Hoover thermostat \cite{Evans1985, Tuckerman1997}. Integration of the equations of motion is carried out via the velocity Verlet algorithm. For the implementation of shear, LAMMPS used a non-orthogonal simulation box characterized by periodic boundary conditions, which is continuously deformed in a manner consistent with the imposed shear rate ($\dot{\gamma}$) \cite{Evans1979, Hansen1994}. This approach is equivalent to the imposition of the Lees-Edwards boundary conditions \cite{Evans2008, Todd2017}. 

The non-diagonal component of the stress tensor \( \sigma_{xy} \) is calculated via Irving-Kirkwood expression \cite{Irving1950, allen-tildesley-87}:
\begin{equation}
\sigma_{xy} = -\frac{1}{V} \left[ \sum_{i=1}^N m_i v_{i,x} v_{i,y} + \sum_{i=1}^N \sum_{j>i}^N \left( r_{ij,x} f_{ij,y} \right) \right],
\end{equation}
where \( m_i \) and \( \mathbf{v}_i \) are the mass and peculiar velocity of the \( i^\text{th} \) particle, \( V \) is the system volume, and \( \mathbf{r}_{ij} \) and \( \mathbf{f}_{ij} \) represent the distance and force vectors between particle pairs. 

Upon the estimation of the stress tensor, the shear viscosity \( \eta(\dot{\gamma}) \) is computed via the following relation:
\begin{equation}
\eta = \frac{\sigma_{xy}}{\dot{\gamma}},
\end{equation}

To establish a reference value for viscosity in the limit of vanishing shear rates ($\dot{\gamma}\rightarrow{0}$), the Green-Kubo relation is employed to calculate the zero-shear viscosity:
\begin{equation}
\eta_{\rm GK} = \frac{V}{k_B T} \int_0^\infty \langle \sigma_{xy}(t) \sigma_{xy}(0) \rangle \;\mathrm{d}t,
\end{equation} Here, \( k_B \) is the Boltzmann constant.

In addition, we examine pressure-induced flow of a dense ($\rho=0.8$) binary blend of flexible  (\( \kappa = 0 \)) and stiffer (\( \kappa = 10 \)) ring polymers within a slit-like channel, with polymers of both components having a degree of polymerization $N=15$. Dimensions of the channel are \( 15 \times 30 \times 15 \). Two layers of atoms, structured as face-centred cubic lattices with a density of 4.0 \cite{Duan_2015}, are positioned below and above the channel (which results in a slight compression of the total volume of the polymers subjected to flow). Wall particles are fixed and mutually non-interacting. A WCA potential with diameter \( \sigma_{\mathrm{wp}} = 0.85 \)~\cite{Duan_2015} and interaction strength \( \varepsilon_{\mathrm{wp}} = 4.0 \) is utilised to model the interactions between wall atoms and polymer beads. A dissipative particle dynamics (DPD) thermostat \cite{Soddemann_Duenweg_2003, Pastorino_Binder_2007, Binder_DPD_2011} maintains a uniform temperature of \( T = 1 \) within the channel. The DPD thermostat is characterized by a friction coefficient \( \lambda_{\rm DPD} = 4.5 \) and a cutoff distance (related to the dissipative and random forces associated with the DPD thermostat) \( r_{\rm c,DPD} = 2 \times 2^{1/6} \) \cite{Pastorino_confproc_2015, Pastorino_2014}. The cutoff pertaining to the conservative WCA interactions between the polymer beads is $2^{1/6}$. A constant force $f_x=0.095$ is applied in the flow direction ($x$-direction) to all polymer beads. The nature of the interaction between the wall particles and the polymer beads along with the structure of the channel walls ensure that the resultant flow profile is characterized by no-slip boundary conditions at the walls, with flow velocity being maximum at the channel centre.

\subsection{Results}

\begin{figure}[ht!]

      \includegraphics[width=6.8in]{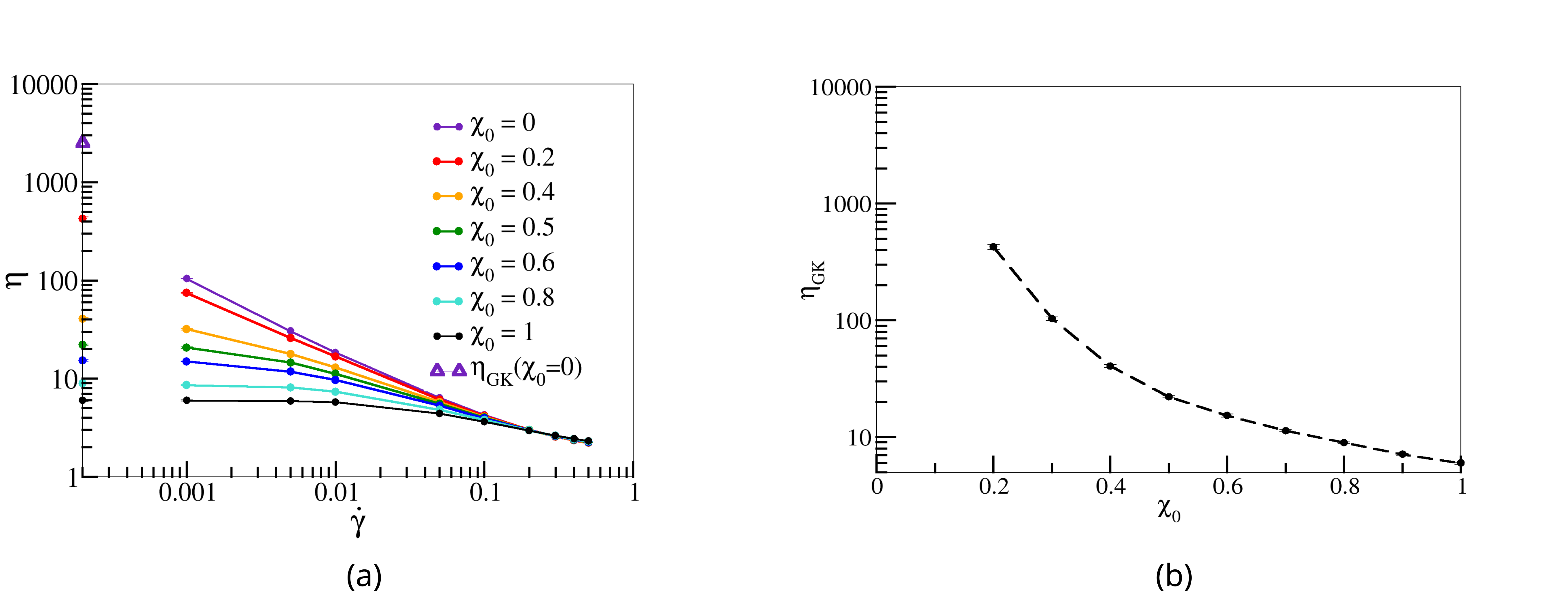}

\caption{
\textbf{(a)} Shear viscosity $\eta$ as function of shear rate $\dot{\gamma}$ for dense ($\rho = 0.8$) binary blends of flexible ($\kappa=0$) and stiffer ($\kappa=10$) ring polymers, corresponding to different proportions ($\chi_0$) of flexible rings . Each ring consists of $N = 15$ monomers. Corresponding zero-shear viscosities, $\eta_\text{GK}$ (calculated by the Green-Kubo relation) are shown on the $y$-axis. Note that the value of  $\eta_\text{GK}$ corresponding to $\kappa=10$ as exhibited on the $y$-axis has been obtained from a least square fitting and not from MD simulations, cf. Appendix \ref{sec_fitting}.
\textbf{(b)} Zero-shear viscosity from the Green-Kubo relation $\eta_\text{GK}$ as a function of $\chi_0$ in a binary mixture of ring polymers consisting of flexible and stiff ($\kappa=10$) rings. A box size of $15\times15\times15$ was considered for all the simulations. Figure~\ref{fig1}b has been adapted with permission from R. Datta, F. Berressem, F. Schmid, A. Nikoubashman, and P. Virnau. Viscosity of Flexible and Semiflexible
Ring Melts: Molecular Origins and Flow-Induced Segregation. Macromolecules, 56(18):7247–7255, 2023. Copyright 2023 American Chemical Society \cite{Datta_2023}. All lines are guides for the eye.
}
\label{fig1}
\end{figure}

Reference \cite{Datta_2023} studies binary mixtures of polymers at equilibrium and under flow. Their results show that an equimolar binary blend of flexible ($\kappa=0$) and stiffer ($\kappa=10$) ring polymers at $\rho=0.8$ and $T=1$ remains homogeneously mixed at equilibrium ($N=15$ for both species of polymers). 
As shown in Figure~\ref{fig1}a, such binary blends, subjected to shear, exhibit shear-thinning, irrespective of the proportion of flexible rings in the mixture. There is, however, a steep decrease in the zero-shear viscosity of flexible and stiffer ring mixtures, with an increase in the proportion of flexible rings ($\chi_0$).   

While a $50/50$ binary blend of flexible and stiffer ($\kappa=10$) ring polymers remains homogeneously mixed in equilibrium, Reference \cite{Datta_2023} describes a method by which such a blend can be made to undergo phase segregation. According to this method, such a mixture is confined within particle-based walls. The structure of the walls and the interactions between the walls and the polymer particles have been tuned such that no-slip boundary conditions are maintained at the channel walls during flow. At equilibrium, the blend does not phase segregate even in the presence of walls (Figure~\ref{fig2}a). Still, when subjected to flow via the application of a constant force $f_x=0.095$ on all polymer beads, the mixture phase segregates, with the stiffer rings prefer to accumulate near the channel center, while the flexible rings preferring to migrate towards the channel walls (Figures~\ref{fig2}b and \ref{fig2}c).

\begin{figure}[ht!]

      \includegraphics[width=6.6in]{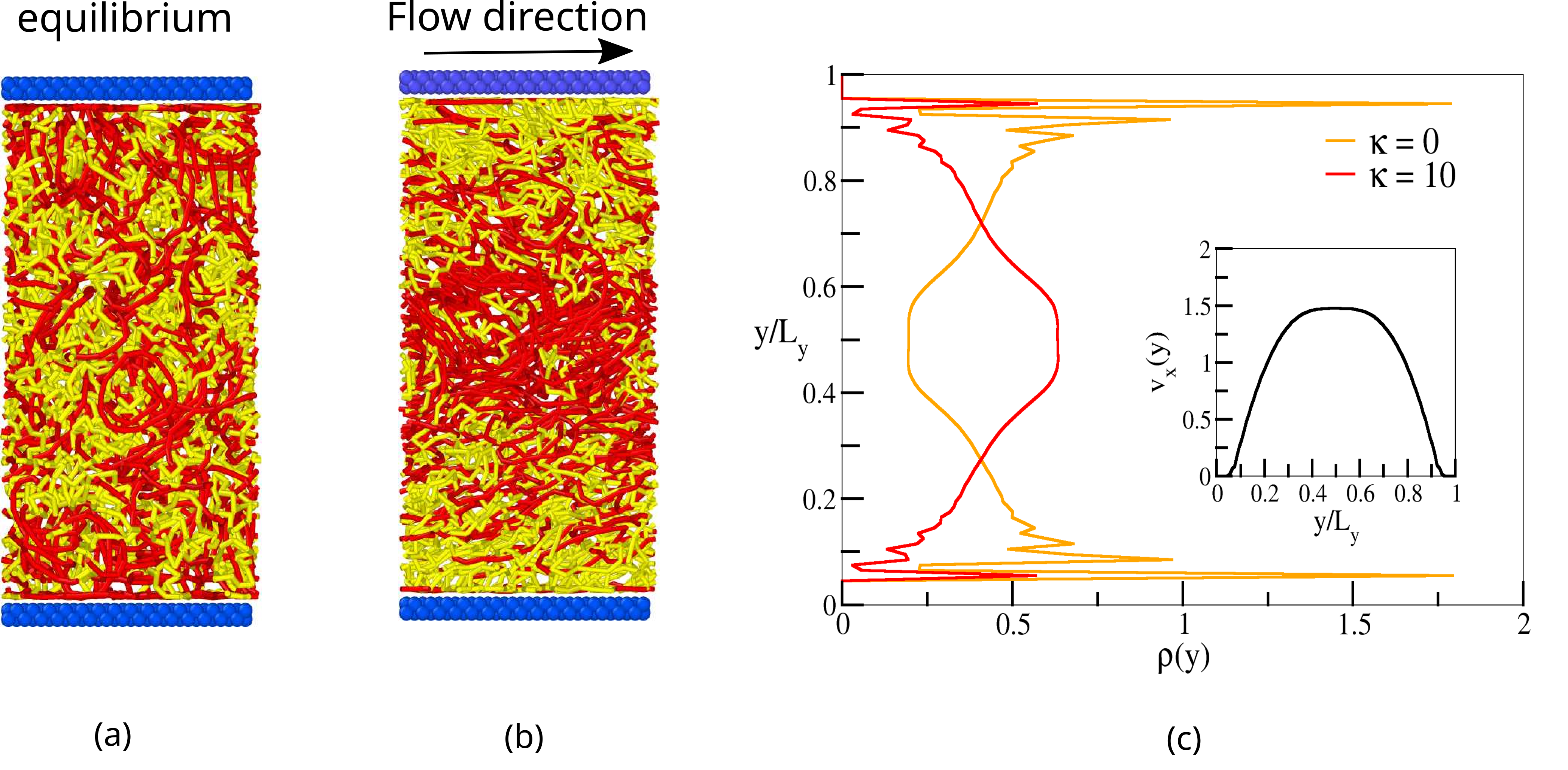}

\caption{\textbf{(a)} An equilibrated binary mixture of flexible ($\kappa=0$, yellow) and stiffer ($\kappa=10$, red) rings at $\chi_0=0.5$, confined within a channel bounded by particle-based walls (blue). \textbf{(b)} The same mixture under flow, induced by applying a constant force $f_x=0.095$ along the $x$-axis to all particles. \textbf{(c)} Density profiles of the respective components across the channel cross-section. (The inset depicts the velocity profile in the flow direction, confirming the implementation of no-slip boundary conditions.) Adapted with permission from R. Datta, F. Berressem, F. Schmid, A. Nikoubashman, and P. Virnau. Viscosity of Flexible and Semiflexible
Ring Melts: Molecular Origins and Flow-Induced Segregation. Macromolecules, 56(18):7247–7255, 2023. Copyright 2023 American Chemical Society} \cite{Datta_2023}.
\label{fig2}
\end{figure}

\section{The hybrid macroscopic-microscopic model} \label{sec_macroscopic}
This section is devoted to a hybrid model for the mixture flow that combines macroscopic fluid dynamics with the results of the microscopic model/MD in order to describe complex rheology. The macroscopic model for the description of phase separation effects is given by the following Cahn-Hilliard-Navier-Stokes (CHNS) system
\begin{alignat}{2}
    \pt\phi + \divx(\phi\fatu) &= \divx(M(\phi)\gradx\mu) && \qquad \text{in $[0,T]\times\Omega$,} \label{CHNS_phi} \\[2mm]
    \mu &= - \gamma\Delta_{\fatx}\phi + f'(\phi) && \qquad \text{in $[0,T]\times\Omega$,}\label{CHNS_mu} \\[2mm]
    \pt\fatu + (\fatu\cdot\gradx)\fatu + \gradx p + \phi\gradx\mu &= \divx[\eta(\gammadot,\phi)\fatD{\fatu}] + \fatF && \qquad \text{in $[0,T]\times\Omega$,} \label{CHNS_u} \\[2mm]
    \divx(\fatu) &= 0 && \qquad \text{in $[0,T]\times\Omega$,} \label{CHNS_div} \\[2mm]
    \fatu &= \bm{0} && \qquad \text{on $[0,T]\times\Gamma$,} \label{CHNS_bdry_u} \\[2mm]
    \partial_{\fatn}\mu &= 0 && \qquad \text{on $[0,T]\times\Gamma$,} \label{CHNS_bdry_mu} \\[2mm]
    \pt\phi - s\Delta_{\Gamma}\phi + g'(\phi) + \partial_{\fatn}\phi &= 0 && \qquad \text{on $[0,T]\times\Gamma$,} \label{CHNS_bdry_phi} \\[2mm]
    (\mu,\phi,\fatu,p)(t,\fatx+L_j\fate_j) &= (\mu,\phi,\fatu,p)(t,\fatx)  && \qquad \text{for all $(t,\fatx)\in [0,T]\times \Gamma_{j}$, $j\in\{1,\dots,d-1\}$,} \label{CHNS_periodic} \\[2mm]
    (\phi, \fatu)(0,\cdot) &= (\phi_0, \fatu_0) && \qquad \text{in $\Omega$.} \label{CHNS_initial}
\end{alignat}
Here, $\fate_j$ denotes the $j$-th unit vector in $\reals^d$, $T>0$ is some final time and 
\begin{align}
    \Omega = \prod_{k\,=\,1}^{d} (0,L_k) \qquad L_k > 0, \;\; k\in\{1,\dots,d\}, \qquad d\in\{2,3\}. \notag
\end{align}
The boundary portions of $\Omega$ mentioned above are given by
\begin{align}
    \Gamma = \prod_{k\,=\,1}^{d-1} (0,L_k) \times \{0,L_d\}, \qquad \Gamma_j = \partial\Omega \cap \{x_j = 0\},
    \;\;j\in\{1,\dots,d-1\}. \notag
\end{align}

Thus, \eqref{CHNS_periodic} encodes periodicity in all spatial directions except for the $x_d$-direction. By $\partial_{\fatn}$, $\Delta_{\Gamma}$ and $\nabla_{\Gamma}$ we denote the outward normal derivative, the Laplace-Beltrami operator and the tangential gradient on $\Gamma$, respectively. Note that in our context, the latter reduces to 
\begin{align}
    \Delta_\Gamma = \sum_{k\,=\,1}^{d-1} \partial^{2}_{x_k}, \qquad \nabla_\Gamma = \sum_{k\,=\,1}^{d-1} \fate_k\partial_{x_k}. \notag
\end{align}
Moreover, $\phi$ denotes the volume fraction of the stiffer polymer, $\bm{u}$ the mixture velocity and $p$ the pressure. Furthermore,
\[\fatD{\fatu} = \frac{1}{2}\left(\gradx\fatu + (\gradx\fatu)^T\right) \qquad \text{and} \qquad \gammadot=\gammadot(\fatD{\fatu})=\sqrt{2\fatD{\fatu}:\fatD{\fatu}}\]
denote the symmetric velocity gradient and the shear rate, respectively. The quantity $\mu$ can be interpreted as the chemical potential of the flexible polymer as it equals the variational derivative of the free energy of the system
\begin{align}
    E(\phi,\fatu) = \into \left(\frac{1}{2}\,|\fatu|^2 + f(\phi) + \frac{\gamma}{2}\,|\gradx\phi|^2 \right)\dx + \int_{\Gamma} \left(g(\phi) + \frac{s}{2}\,|\nabla_{\Gamma}\phi|^2\right)\dsx \label{energy}
\end{align}
with respect to $\phi$, i.e., 
\[\frac{\delta E}{\delta \phi} = - \gamma\Delta_{\fatx}\phi + f'(\phi) = \mu.\]
Complex non-Newtonian effects arising in the polymeric mixture are modelled by the viscosity $\eta$ that is obtained from the MD simulations. Moreover, $\fatF$ denotes an external body force, $f,g$ are potential functions and $M$ is the so-called mobility function. \\

\noindent \textbf{Assumptions.} We make the following assumptions throughout this section. \hypertarget{A1}{}
\begin{itemize}
    \item[(A1)]{$\gamma,s>0$ are positive constants. \hypertarget{A2}{}
    }
    \item[(A2)]{$\fatF$ is constant in time and space. \hypertarget{A3}{}
    }
    \item[(A3)]{$M\in C(\reals)$ and $M(\phi)\geq 0$ for all $\phi\in\reals$. \hypertarget{A4}{}
    }
    \item[(A4)]{$\eta\in C([0,\infty)\times\reals)$ and there exist $\overline{\eta}\geq\underline{\eta}>0$ such that $\underline{\eta}\leq\eta(\gammadot,\phi)\leq \overline{\eta} $ for all $(\gammadot,\phi)\in[0,\infty)\times\reals$. \hypertarget{A5}{}
    }
    \item[(A5)]{There exist convex functions $f_{\mathrm{vex}}, g_{\mathrm{vex}}\in C^2(\reals)$ and concave functions $f_{\mathrm{cav}}, g_{\mathrm{cav}}\in C^2(\reals)$ such that $(f,g) = (f_{\mathrm{vex}} + f_{\mathrm{cav}}, g_{\mathrm{vex}} + g_{\mathrm{cav}})$.
    Moreover, there are numbers $q>1$, $c\geq 0$, $c'>0$ such that for all $\phi\in\reals$
    \begin{gather*}
        \min\{f(\phi), g(\phi)\} \geq - c, \qquad \max\{|f'_{\mathrm{vex}}(\phi)|,|f'_{\mathrm{cav}}(\phi)|\} \leq c'(1 + |\phi|^3), \\[2mm] 
        \max\{|g'_{\mathrm{vex}}(\phi)|,|g'_{\mathrm{cav}}(\phi)|\} \leq c'(1 + |\phi|^{q}).
    \end{gather*}
    }
\end{itemize}

\subsection{General notation}
We introduce some general notation. \\

\noindent \textbf{Vector notation and boxes.} For vectors $\fatx\in\reals^d$ we typically use bold lower case symbols. For vectors in $\fatx'\in\reals^{d-1}$ we shall add a prime symbol to the notation. Analogously, we shall write
\begin{align}
    (\bm{0},\bm{L}) = \prod_{k\,=\,1}^d (0,L_k) \qquad \text{and} \qquad (\bm{0}',\bm{L}') = \prod_{k\,=\,1}^{d-1} (0,L_k). \notag
\end{align}

\noindent \textbf{Norms and function spaces.} The maximum norm $||\cdot||_\infty$ and the $p$-norms $||\cdot||_p$,  $p\in[1,\infty)$, of numbers, vectors and matrices read as follows:
\begin{align*}
    ||\mathcal{A}||_\infty = \left\{
    \begin{array}{cl}
        |\mathcal{A}| & \text{if $\mathcal{A}\in\reals$,} \\[2mm]
        \max\{|a_i|\,|\,i=1,\dots,d\} & \text{if $\mathcal{A}\in\reals^d$,} \\[2mm]
        \max\{|a_{ij}|\,|\,i,j=1,\dots,d\} & \text{if $\mathcal{A}\in\reals^{d\times d}$,} 
    \end{array}
    \right. \qquad 
    ||\mathcal{A}||_p = \left\{
    \begin{array}{cl}
        |\mathcal{A}| & \text{if $\mathcal{A}\in\reals$,} \\[2mm]
        \left(\sum_{i\,=\,1}^d |a_i|^p\right)^{1/p} & \text{if $\mathcal{A}\in\reals^d$,} \\[2mm]
        \left(\sum_{i,j\,=\,1}^d |a_{ij}|^p\right)^{1/p} & \text{if $\mathcal{A}\in\reals^{d\times d}$.} 
    \end{array}
    \right. 
\end{align*}
For the scalar product of numbers, vectors and matrices we write 
\begin{gather*}
    \langle\mathcal{A},\mathcal{B}\rangle = \left\{
    {\arraycolsep=2pt
    \begin{array}{rlll}
        & & \mathcal{A}\mathcal{B} & \text{if $\mathcal{A},\mathcal{B}\in\reals$,}  \\[2mm]
        \mathcal{A}\cdot\mathcal{B} &=& \sum_{i\,=\,1}^d a_{i}b_{i} & \text{if $\mathcal{A},\mathcal{B}\in\reals^d$,} \\[2mm]
        \mathcal{A}:\mathcal{B} &=& \sum_{i,j\,=\,1}^d a_{ij}b_{ij} & \text{if $\mathcal{A},\mathcal{B}\in\reals^{d\times d}$.}
    \end{array}
    }
    \right. 
\end{gather*}
$C(\overline{\Omega})$ denotes the space of continuous functions defined on $C(\overline{\Omega})$ endowed with the maximum norm given by
\begin{align}
    ||\mathcal{F}||_{C(\overline{\Omega})} = \max_{\overline{\Omega}}\{||\mathcal{F}||_\infty\}
    \notag 
\end{align}
for all $\mathcal{F}\in C(\overline{\Omega})\cup C(\overline{\Omega})^d\cup C(\overline{\Omega})^{d\times d}$. The usual Lebesgue spaces are denoted by $L^p(\Omega), p\in[1,\infty]$, and the $L^2$-based Sobolev space by $H^1(\Omega)=W^{1,2}(\Omega)$. They are endowed with the following norms:
\begin{align}
    ||\mathcal{F}||_{L^p(\Omega)} = \left\{\begin{array}{cl}
        \left({\displaystyle\int_\Omega} ||\mathcal{F}||^p_p \notag\;\dx\right)^{1/p} & \text{if $p<\infty$,} \\[4mm]
        \underset{\Omega}{\mathrm{ess\,sup}}\{||\mathcal{F}||_\infty\} & \text{if $p=\infty$,}
    \end{array}
    \right. \qquad ||\mathcal{G}||_{H^1(\Omega)} = \left(||\mathcal{G}||^2_{L^2(\Omega)} + ||\gradx\mathcal{G}||^2_{L^2(\Omega)}\right)^{1/2}
\end{align}
for all $\mathcal{F}\in L^p(\Omega)\cup L^p(\Omega)^d\cup L^p(\Omega)^{d\times d}$ and $\mathcal{G}\in H^1(\Omega)\cup H^1(\Omega)^d$. For an introduction to these spaces we refer to \citep{Adams} (see also \citep[Section 2]{Necas} for the Sobolev spaces). Due to \eqref{CHNS_periodic}, we need the periodic subspaces $H^1_{\mathrm{per}}(\Omega)$ and $H^1_{0,\mathrm{per}}(\Omega)$ of $H^1(\Omega)$ that are given by
\begin{align}
    H^1_{\mathrm{per}}(\Omega) &= \Big\{f\in H^1(\Omega)\,\Big|\,\mathrm{tr}[f](\fatx+L_j\fate_j) = \mathrm{tr}[f](\fatx) \;\;\text{for a.e. $\fatx\in\Gamma_j$, $j\in\{1,\dots,d-1\}$}\Big\}, \notag \\[2mm]
    H^1_{0,\mathrm{per}}(\Omega) &= \Big\{f\in H^1_{\mathrm{per}}(\Omega)\,\Big|\,\mathrm{tr}[f]\big|_\Gamma = 0\Big\}. \notag
\end{align}
For the trace operator $\mathrm{tr}:H^1(\Omega)\to L^q(\partial\Omega)$ we refer to Theorem \ref{thm_trace}. An introduction to the spaces $L^p(\partial\Omega)$, $p\in[1,\infty]$, defined over the boundary of $\Omega$ and to the boundary integral (with surface element $\dsx$) can be found in \citep[Sections 2.4 and 3.1]{Necas} and \citep[Sections 2.1 and 2.2]{Mitrea}. We are, however, in a very simple situation since we only need to deal with flat boundary parts. For the Lebesgue and Sobolev spaces over $\Gamma$ we have the following simple definitions:     
\begin{align}
    L^p(\Gamma) &= \Big\{f:\Gamma\to\reals\,\Big|\,f(\cdot,0),f(\cdot,L_d)\in L^p((\bm{0}',\bm{L}'))\Big\}, \qquad p\in[1,\infty],\notag \\[2mm]
    H^1(\Gamma) &= \Big\{g:\Gamma\to\reals\,\Big|\,g(\cdot,0),g(\cdot,L_d)\in H^1((\bm{0}',\bm{L}'))\Big\}. \notag
\end{align}
The corresponding norms read
\begin{align}
    ||\mathcal{F}||_{L^p(\Gamma)} &= 
    {\arraycolsep=0pt
    \left\{\begin{array}{rll}
        \left({\displaystyle\int_{\Gamma}} ||\mathcal{F}||^p_p \;\dsx\right)^{1/p} &\;= \left({\displaystyle\int_{(\bm{0}'\bm{L}')}} \Big[||\mathcal{F}(\cdot,0)||^p_p + ||\mathcal{F}(\cdot,L_d)||^p_p\Big]\,\dx'\right)^{1/p} & \quad \text{if $p<\infty$,} \\[6mm]
        \underset{\Gamma}{\mathrm{ess\,sup}}\big\{||\mathcal{F}||_\infty\big\} &\;= \max\!\left\{\underset{(\bm{0}'\bm{L}')}{\mathrm{ess\,sup}}\big\{||\mathcal{F}(\cdot,0)||_\infty\big\}, \underset{(\bm{0}'\bm{L}')}{\mathrm{ess\,sup}}\big\{||\mathcal{F}(\cdot,L_d)||_\infty\big\}\right\} & \quad \text{if $p=\infty$,}
    \end{array}
    \right. 
    } \notag \\[2mm]
    ||\mathcal{G}||_{H^1(\Gamma)} &= \left(||\mathcal{G}||^2_{L^2(\Gamma)} + ||\nabla_{\Gamma}\mathcal{G}||^2_{L^2(\Gamma)}\right)^{1/2} = \left(||\mathcal{G}||^2_{L^2(\Gamma)} + ||\nabla_{\!\fatx'}\,\mathcal{G}(\cdot,0)||^2_{L^2((\bm{0}',\bm{L}'))} + ||\nabla_{\!\fatx'}\,\mathcal{G}(\cdot,L_d)||^2_{L^2((\bm{0}',\bm{L}'))}\right)^{1/2} \notag
\end{align}
for all $\mathcal{F}\in L^p(\Gamma)\cup L^p(\Gamma)^d\cup L^p(\Gamma)^{d\times d}$ and $\mathcal{G}\in H^1(\Gamma)$. In the sequel, we would like to apply the trace inequality (cf. Theorem \ref{thm_trace}) to estimate the norm in $L^p(\Gamma)$ of certain functions. Therefore, we note that if $f\in L^p(\partial\Omega)$ then $f\in L^p(\Gamma)$ and 
\begin{align}
    ||f||_{L^p(\Gamma)} \leq ||f||_{L^p(\partial\Omega)}. \notag
\end{align}
If $\mathcal{F},\mathcal{G}$ are $\reals$-, $\reals^d$- or $\reals^{d\times d}$-valued functions defined on $\Omega$ or $\Gamma$, we write
\begin{align*}
    \langle\mathcal{F},\mathcal{G}\rangle_\Omega = \int_\Omega\langle\mathcal{F},\mathcal{G}\rangle\;\dx \qquad \text{or} \qquad 
    \langle\mathcal{F},\mathcal{G}\rangle_{\Gamma} = \int_{\Gamma}\langle\mathcal{F},\mathcal{G}\rangle\;\dsx,
\end{align*}
respectively, provided the integral expressions involved are well-defined (cf. Theorem \ref{thm_hölder}). Finally, if $(X,||\cdot||_X)$ is a Banach space, the corresponding Lebesgue-Bochner spaces over the interval $(0,T)$ are denoted by $L^p(0,T;X)$, $p\in[1,\infty]$. These spaces are endowed with the following norms:
\begin{align*}
    ||\mathcal{F}||_{L^p(0,T;X)} = \left\{\begin{array}{cl}
        \left({\displaystyle\int_0^{\,T}} ||\mathcal{F}||^p_X \notag\;\dt\right)^{1/p} & \text{if $p<\infty$,} \\[4mm]
        \underset{(0,T)}{\mathrm{ess\,sup}}\{||\mathcal{F}||_X\} & \text{if $p=\infty$}
    \end{array}
    \right.
\end{align*}
for all $\mathcal{F}\in L^p(0,T;X)$. For an introduction to the Lebesgue-Bochner spaces we refer to \citep[Chapter II]{Diestel}. \\

\subsection{The numerical scheme}
To solve the CHNS model \eqref{CHNS_phi}--\eqref{CHNS_initial} numerically, we employ a finite element method. The precise choice of the method is inspired by the numerical schemes presented in \citep[Sections 2.5, 2.6, 3]{Brunk} and \citep[Section 6]{Petcu}. \\

\noindent \textbf{Time discretization.} The admissible time steps are the elements of the set $\Theta = \{T/m\,|\,m\in\naturals\}$. For a given time step $\deltat$ we set $N_{\deltat} = T/\deltat$, $t_{k,\deltat} = k\deltat$ for $k\in\{0,\dots,N_{\deltat}\}$,
\[I_{k,\deltat}=\left\{\begin{array}{cl}
[0,t_{1,\deltat}] & \text{if $k=1$,} \\[2mm]
(t_{k-1,\deltat}, t_{k,\deltat}] & \text{if $k\in\{2,\dots,N_{\deltat}\}$,}
\end{array}\right. \qquad \mathcal{I}_{\deltat} = \{I_{k,\deltat}\,|\,k\in\{1,\dots,N_{\deltat}\}\}.\]
To simplify the notation, we frequently omit the subscript $\deltat$ when there is no risk of confusion. Moreover, given an admissible time step $\deltat$ and a vector space $V$, we introduce the spaces 
\begin{align*}
    \Pi^0(\mathcal{I}_{\deltat}; V) &= \left\{f:[0,T]\to V\left|\,\exists\, \{f^k\}_{k\,=\,1}^{N}\subset V: f = \sum_{k\,=\,1}^{N}f^k\mathds{1}_{I_k}(\cdot) \right.\right\}, \\[2mm]
    \Pi^1_c(\mathcal{I}_{\deltat}; V) &= \left\{f:[0,T]\to V\left|\,\exists\, \{f^k\}_{k\,=\,0}^{N}\subset V: f = \sum_{k\,=\,1}^{N}\left[f^{k-1} + \frac{(\cdot)-t_{k-1}}{t_k-t_{k-1}}\,(f^k-f^{k-1})\right]\mathds{1}_{I_k}(\cdot)\right. \right\},
\end{align*}
where $\mathds{1}_{I_{k}}$ denotes the indicator function of the set $I_k$, $k\in\{1,\dots,N\}$. Consequently, for $f\in \Pi^0(\mathcal{I}_{\deltat};V)\cup \Pi^1_c(\mathcal{I}_{\deltat};V)$ we write $f^k$ instead of $f(t_k)$, $k\in\{0,\dots,N\}$. Moreover, we set $f^{k+1/2}=\frac{1}{2}(f^k + f^{k+1})$, $k\in\{0,\dots,N-1\}$. \\

\noindent \textbf{Space discretization.} The admissible values for $h$ are the elements of the set $(0,H]$, where $H<\mathrm{diam}(\Omega)$. For each admissible $h$, we choose an admissible triangulation $\mathcal{T}_h$ of $\Omega$ (cf. Definition \ref{def_triangulation}) with mesh parameter $h$. We require the choices for the triangulations to be such that the family $\{\mathcal{T}_h\}_{h\,\in\,(0,H]}$ is uniform. Moreover, given $h\in(0,H]$ we define the following discrete function spaces:
\begin{gather*}
    \mathcal{R} = \Pi^0(\overline{\Omega}), \qquad \mathcal{Q}_h = \left\{f\in H^1_{\mathrm{per}}(\Omega)\cap C(\overline{\Omega})\left|\,f|_K\in \Pi^1(K) \quad \forall K\in\mathcal{T}_h\right.\right\}, \\[2mm]
    \mathcal{V}_h = \left\{f\in H^1_{0,\mathrm{per}}(\Omega)\cap C(\overline{\Omega})\left|\,f|_K\in \Pi^2(K) \quad \forall K\in\mathcal{T}_h\right.\right\}, \qquad X_h = \mathcal{Q}_h\times\mathcal{Q}_h\times\mathcal{V}_h^d\times\mathcal{Q}_h\times\mathcal{R}.
\end{gather*}

\noindent \textbf{Numerical scheme.} Given $(\phi_h^0,\fatu_h^0)\in \mathcal{Q}_h\times\mathcal{V}_h^d$ find $(\phi_h,\fatu_h)\in\Pi^1_c(\mathcal{I}_{\deltat};\mathcal{Q}_h\times\mathcal{V}_h^d)$, $(\mu_h,p_h)\in\Pi^0(\mathcal{I}_{\deltat};\mathcal{Q}_h\times\mathcal{Q}_h)$ and $r_h\in\Pi^0(\mathcal{I}_{\deltat};\mathcal{R})$ such that
\begin{align}
    \left\langle\frac{\phi_h^{n+1}-\phi_h^{n}}{\deltat}\,, \psi_h\right\rangle_{\!\Omega} &= \left\langle\phi_h^{n+1/2}\fatu_h^{n+1}, \gradx\psi_h\right\rangle_{\Omega} - \left\langle M(\phi_h^{n})\gradx\mu_h^{n+1}, \gradx\psi_h\right\rangle_{\Omega}\,, \label{discrete_phi} \\[4mm]
    \left\langle\mu_h^{n+1}, w_h\right\rangle_{\Omega} &= \gamma\left\langle\gradx\phi_h^{n+1}, \gradx w_h\right\rangle_{\Omega} + \left\langle f_{\mathrm{vex}}'(\phi_h^{n+1}), w_h\right\rangle_{\Omega} + \left\langle f_{\mathrm{cav}}'(\phi_h^{n}), w_h\right\rangle_{\Omega} + \left\langle\frac{\phi_h^{n+1}-\phi_h^{n}}{\deltat}\,, w_h\right\rangle_{\!\Gamma} \notag \\[2mm]
    &\phantom{\;=} + s\left\langle\nabla_{\Gamma}\phi_h^{n+1}, \nabla_{\Gamma}w_h\right\rangle_{\Gamma} + \left\langle g_{\mathrm{vex}}'(\phi_h^{n+1}), w_h\right\rangle_{\Gamma} + \left\langle g_{\mathrm{cav}}'(\phi_h^{n}), w_h\right\rangle_{\Gamma}\,, \label{discrete_mu} \\[4mm]
    \left\langle\frac{\fatu_h^{n+1}-\fatu_h^{n}}{\deltat}\,, \fatv_h\right\rangle_{\!\Omega} &= \frac{1}{2}\left\langle(\fatu_h^{n+1/2}\cdot\gradx)\fatv_h, \fatu_h^{n+1}\right\rangle_{\Omega} - \frac{1}{2}\left\langle(\fatu_h^{n+1/2}\cdot\gradx)\fatu_h^{n+1}, \fatv_h\right\rangle_{\Omega} + \left\langle \fatF, \fatv_h\right\rangle_{\Omega} \notag \\[2mm]
    &\phantom{\;=} - \left\langle\eta(\gammadot_h^{n},\phi_h^{n})\fatD{\fatu_h^{n+1}}, \fatD{\fatv_h}\right\rangle_{\Omega} + \left\langle p_h^{n+1}, \divx(\fatv_h)\right\rangle_{\Omega} - \left\langle\phi_h^{n+1/2}\gradx\mu_h^{n+1}, \fatv_h\right\rangle_{\Omega}\,, \label{discrete_u} \\[4mm]
    \left\langle r_h^{n+1}, q_h\right\rangle_{\Omega} &= -\left\langle\divx(\fatu_h^{n+1}), q_h\right\rangle_{\Omega} \label{discrete_div} \\[4mm]
    \left\langle p_h^{n+1}, s_h\right\rangle_{\Omega} &= 0 \label{discrete_mean_value}
\end{align}
for all $(\psi_h,w_h,\fatv_h,q_h,s_h)\in X_h$ and all $n\in\{0,\dots,N-1\}$. Here, $\gammadot_h^n=\sqrt{2\fatD{\fatu_h^n}:\fatD{\fatu_h^n}}$, $n\in\{0,\dots,N-1\}$. 

\begin{remark}
    To ensure the uniqueness of $p_h$, we require it to have zero mean. For practical reasons (connected to the implementation of the numerical scheme), we do not incorporate this constraint in the space for the discrete pressure. Instead, we include it as a separate equation -- see \eqref{discrete_mean_value} -- that is coupled to the other equations via the quantity $r_h$ that acts as a Lagrange multiplier.
\end{remark}

\subsection{Properties of the numerical scheme}
We summarise some of the basic properties of our numerical scheme.
\begin{theorem}\label{thm_properties_scheme}
    The numerical scheme \eqref{discrete_phi}--\eqref{discrete_mean_value} is \hypertarget{thm_i}{}
    \begin{itemize}
        \item[$\mathrm{(i)}$]{mass conserving, i.e. $\langle\phi_h^{n+1}, 1\rangle_\Omega = \langle\phi_h^{n}, 1\rangle_\Omega$ for all $n\in\{0,\dots,N-1\}$; \hypertarget{thm_ii}{}
        }
        \item[$\mathrm{(ii)}$]{energy-stable, i.e. if $\fatF=\bm{0}$ then $E(\phi_h^{n+1},\fatu_h^{n+1}) \leq E(\phi_h^{n},\fatu_h^{n})$ for all $n\in\{0,\dots,N-1\}$; \hypertarget{thm_iii}{}
        }
        \item[$\mathrm{(iii)}$]{solvable, i.e. given $n\in\{0,\dots,N-1\}$ and $(\phi_h^n,\fatu_h^n)\in \mathcal{Q}_h\times\mathcal{V}_h^d$ such that $\uM_h^n=\inf_{\fatx\,\in\,\Omega}\{M(\phi_h^n(\fatx))\}>0$, there exists 
        \[(\mu_h^{n+1},\phi_h^{n+1},\fatu_h^{n+1},p_h^{n+1},r_h^{n+1})\in X_h\]
        such that \eqref{discrete_phi}--\eqref{discrete_mean_value} are satisfied.
        }
    \end{itemize}
\end{theorem}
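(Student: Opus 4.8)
Both follow by testing the scheme with structure-revealing choices. For (i) I would take $\psi_h=1\in\mathcal{Q}_h$ in \eqref{discrete_phi}; since $\gradx 1=\bm{0}$ the entire right-hand side drops out and $\langle\phi_h^{n+1}-\phi_h^n,1\rangle_\Omega=0$, which is mass conservation. For (ii) the plan is to test \eqref{discrete_phi} with $\psi_h=\mu_h^{n+1}$, test \eqref{discrete_mu} with $w_h=(\phi_h^{n+1}-\phi_h^n)/\deltat$, and test \eqref{discrete_u} with $\fatv_h=\fatu_h^{n+1}$, and then combine. The left-hand sides of the first two coincide, so equating their right-hand sides expresses the transport/coupling term $\langle\phi_h^{n+1/2}\fatu_h^{n+1},\gradx\mu_h^{n+1}\rangle_\Omega$ in terms of the Cahn--Hilliard energy contributions plus the nonnegative mobility dissipation $\langle M(\phi_h^n)\gradx\mu_h^{n+1},\gradx\mu_h^{n+1}\rangle_\Omega$. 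The same coupling term appears in the momentum balance with the opposite sign and therefore cancels upon substitution. The antisymmetric discretisation of the convection makes $\tfrac12\langle(\fatu_h^{n+1/2}\cdot\gradx)\fatu_h^{n+1},\fatu_h^{n+1}\rangle_\Omega-\tfrac12\langle(\fatu_h^{n+1/2}\cdot\gradx)\fatu_h^{n+1},\fatu_h^{n+1}\rangle_\Omega$ vanish identically, and the pressure term vanishes as well: testing \eqref{discrete_div} with $q_h=p_h^{n+1}$ and \eqref{discrete_mean_value} with $s_h=r_h^{n+1}$ (legitimate since $r_h^{n+1}\in\mathcal{R}$) gives $\langle p_h^{n+1},\divx\fatu_h^{n+1}\rangle_\Omega=-\langle r_h^{n+1},p_h^{n+1}\rangle_\Omega=0$.

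\textbf{Finishing (ii).} What remains are the two nonnegative dissipation terms (nonnegative by (A3) and (A4)), the quadratic time-difference terms and the potential terms. For the quadratic parts I use $\langle a-b,a\rangle=\tfrac12(\|a\|^2-\|b\|^2+\|a-b\|^2)$ applied to $\tfrac12\|\fatu\|_{L^2(\Omega)}^2$, $\tfrac{\gamma}{2}\|\gradx\phi\|_{L^2(\Omega)}^2$ and $\tfrac{s}{2}\|\nabla_\Gamma\phi\|_{L^2(\Gamma)}^2$, discarding the nonnegative remainders; the boundary term $\tfrac{1}{\deltat}\|\phi_h^{n+1}-\phi_h^n\|_{L^2(\Gamma)}^2$ is likewise nonnegative. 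The potentials are handled by the convex/concave splitting of (A5): convexity yields $f_{\mathrm{vex}}'(\phi_h^{n+1})(\phi_h^{n+1}-\phi_h^n)\geq f_{\mathrm{vex}}(\phi_h^{n+1})-f_{\mathrm{vex}}(\phi_h^n)$ and concavity yields $f_{\mathrm{cav}}'(\phi_h^{n})(\phi_h^{n+1}-\phi_h^n)\geq f_{\mathrm{cav}}(\phi_h^{n+1})-f_{\mathrm{cav}}(\phi_h^n)$, and analogously for $g$ on $\Gamma$, so that summing recovers $\int_\Omega(f(\phi_h^{n+1})-f(\phi_h^n))\,\dx$ and $\int_\Gamma(g(\phi_h^{n+1})-g(\phi_h^n))\,\dsx$ as lower bounds. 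Collecting everything (and multiplying by $\deltat$) gives
\[ E(\phi_h^{n+1},\fatu_h^{n+1}) - E(\phi_h^n,\fatu_h^n) \leq -\deltat\left(\langle M(\phi_h^n)\gradx\mu_h^{n+1},\gradx\mu_h^{n+1}\rangle_\Omega + \langle\eta(\gammadot_h^n,\phi_h^n)\fatD{\fatu_h^{n+1}},\fatD{\fatu_h^{n+1}}\rangle_\Omega\right) \leq 0. \]

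\textbf{Part (iii), setup.} For solvability I would invoke Schaefer's fixed point theorem (Appendix \ref{sec_schaefer}). The plan is to recast one time step as a fixed-point equation $z=\mathcal{S}z$ on the finite-dimensional space $X_h$, where the map $\mathcal{S}$ freezes the nonlinearities ($M$, $\eta$, the convective and coupling terms, and $f_{\mathrm{vex}}',f_{\mathrm{cav}}',g_{\mathrm{vex}}',g_{\mathrm{cav}}'$) at a given iterate and returns the solution of the resulting linear system. Continuity of $\mathcal{S}$ follows from the continuity of $M,\eta,f',g'$ postulated in (A3)--(A5); compactness is automatic because $X_h$ is finite-dimensional. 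Well-definedness of $\mathcal{S}$ requires unique solvability of the linear problems: the Cahn--Hilliard block is coercive once $\uM_h^n>0$, while the Stokes-type block for $(\fatu_h^{n+1},p_h^{n+1},r_h^{n+1})$ is invertible thanks to viscous coercivity together with the inf-sup stability of the Taylor--Hood pair $(\mathcal{V}_h,\mathcal{Q}_h)$. I would also record here that $r_h^{n+1}$ is forced to vanish, since testing \eqref{discrete_div} with $q_h=1$ gives $r_h^{n+1}|\Omega|=-\into\divx\fatu_h^{n+1}\,\dx=-\intpo\fatu_h^{n+1}\cdot\fatn\,\dsx=0$ by the boundary conditions \eqref{CHNS_bdry_u} and \eqref{CHNS_periodic}.

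\textbf{Part (iii), a priori bound and main obstacle.} The hypotheses of Schaefer's theorem then reduce to an a priori bound, uniform in the homotopy parameter $\lambda\in[0,1]$, on any $z$ satisfying $z=\lambda\mathcal{S}z$. I would obtain it by replaying the energy computation of (ii) on the homotopy system (now carrying the force $\fatF$): testing with $(\mu_h^{n+1},(\phi_h^{n+1}-\phi_h^n)/\deltat,\fatu_h^{n+1})$ controls $\|\fatu_h^{n+1}\|_{L^2(\Omega)}$, $\|\gradx\phi_h^{n+1}\|_{L^2(\Omega)}$, $\|\nabla_\Gamma\phi_h^{n+1}\|_{L^2(\Gamma)}$, $\sqrt{\deltat}\,\|\gradx\mu_h^{n+1}\|_{L^2(\Omega)}$ and the potential integrals, with the body-force and coupling contributions absorbed via Young's and the trace inequality (Theorem \ref{thm_trace}). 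The crucial point is that the bound on $\gradx\mu_h^{n+1}$ is available \emph{only because} $\uM_h^n>0$; the mean of $\mu_h^{n+1}$ is then recovered by testing \eqref{discrete_mu} with $w_h=1$, which expresses $\langle\mu_h^{n+1},1\rangle_\Omega$ through the already-bounded potential terms and the boundary time-difference, and with a Poincaré inequality this closes the estimate for $\mu_h^{n+1}$ in $H^1(\Omega)$; finally $p_h^{n+1}$ is bounded through the inf-sup condition. Since all norms on the fixed space $X_h$ are equivalent, these bounds are $\lambda$-independent, and Schaefer's theorem yields a solution at $\lambda=1$. I expect the main obstacle to be exactly this closing of the a priori estimate on every component --- in particular gaining full control of $\mu_h^{n+1}$ (where $\uM_h^n>0$ is indispensable) and of $p_h^{n+1}$ (where inf-sup stability enters) --- rather than the largely mechanical verification of continuity and the structural cancellations.
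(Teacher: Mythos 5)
Your parts (i) and (ii) are correct and essentially identical to the paper's treatment: (i) is the test $\psi_h=1$ in \eqref{discrete_phi}, and your direct energy computation for (ii) --- the choice of test functions $(\mu_h^{n+1},(\phi_h^{n+1}-\phi_h^n)/\deltat,\fatu_h^{n+1},p_h^{n+1},r_h^{n+1})$, the cancellation of the coupling and convection terms, the $(p,r)$-pairing that kills the pressure term, and the convex/concave inequalities from (\hyperlink{A5}{A5}) --- is exactly the $\lambda=1$ case of the paper's energy inequality (Lemma \ref{lemma_discrete_energy_inequality}), which is how the paper proves (ii).

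For (iii) there is a genuine gap, and it sits precisely where you located the ``main obstacle''. You propose the Picard-type map that \emph{freezes} the nonlinearities and solves the resulting \emph{linear} system, and then claim the $\lambda$-uniform a priori bound for $z=\lambda\mathcal{S}z$ follows ``by replaying the energy computation of (ii)''. It does not, for your map as described: $z=\lambda\mathcal{S}z$ means $A(z)\,z=\lambda\,b(z)$, where the explicit data ($\phi_h^n$, $\fatu_h^n$, the frozen potential derivatives) sit in $b$ and are therefore scaled by $\lambda$. The discrete differences then become $\phi_h^{n+1}-\lambda\phi_h^n$ and $\fatu_h^{n+1}-\lambda\fatu_h^n$, so the telescoping into $E_h^{n+1}-E_h^n$ breaks, and the convexity/concavity inequalities of (\hyperlink{A5}{A5}) no longer apply verbatim --- note that $f'$ may grow cubically, so the resulting cross terms such as $\lambda\langle f_{\mathrm{vex}}'(\phi_h^{n+1}),\phi_h^n\rangle_\Omega$ cannot be absorbed without that structure. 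The paper avoids this by a different construction: $\mathcal{S}_h$ is the Riesz representative (in a tailored inner product on $X_h$ built from the coercive pieces $\langle\mu_h,\psi_h\rangle_\Omega+\langle M(\phi_h^n)\gradx\mu_h,\gradx\psi_h\rangle_\Omega$, the viscous form, etc., and using the difference quotient $(\phi_h^{n+1}-\phi_h^n)/\deltat$ as the unknown in the second slot) of the \emph{full nonlinear} residual, so well-definedness is immediate and no linearized solvability is needed; the homotopy $\fatx=\lambda\mathcal{S}_h(\fatx)$ is then exactly Problem \ref{problem}, which is assembled by adding and subtracting terms (e.g.\ $\langle\mu_h^{n+1},\psi_h\rangle_\Omega$, $\langle p_h^{n+1},q_h\rangle_\Omega$, $\langle r_h^{n+1},s_h\rangle_\Omega$, $\langle(\phi_h^{n+1}-\phi_h^n)/\deltat,w_h\rangle_\Omega$) so that for every $\lambda\in(0,1]$ the energy test yields \eqref{discrete_energy_inequality} \emph{including} the nonpositive extra terms $-(1/\lambda-1)\bigl(\|\mu_h^{n+1}\|^2_{L^2(\Omega)}+\|(\phi_h^{n+1}-\phi_h^n)/\deltat\|^2_{L^2(\Omega)}+\|p_h^{n+1}\|^2_{L^2(\Omega)}+\|r_h^{n+1}\|^2_{L^2(\Omega)}\bigr)$. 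Those terms are not decoration: the bounds \eqref{estimates4} they produce are used in Corollary \ref{corollary_apriori_estimates_1} to control the means $\langle\mu_h^{n+1},1\rangle_\Omega$ and $\langle p_h^{n+1},1\rangle_\Omega$ and to close the $H^1$-estimate for $\mu_h^{n+1}$ and the inf-sup pressure estimate --- exactly the two closures you flagged --- and your map supplies no substitute for them. So the strategy (Schaefer, energy-based bounds, mean-value trick for $\mu$, inf-sup for $p$, $\uM_h^n>0$ for $\gradx\mu$) is the right one, but the fixed-point map must be engineered so that the energy structure survives at every $\lambda\in(0,1]$; this engineering is the missing idea in your proposal.
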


\noindent The proof of the solvability of the numerical scheme \eqref{discrete_phi}--\eqref{discrete_mean_value} is based on Schaefer's fixed point theorem (cf. Theorem \ref{schaefer}). Therefore, we introduce and analyse the following modified version of the $(n+1)$-st step of our numerical scheme. 

\begin{problem}\label{problem}
    Given $\lambda\in[0,1]$, $n\in\{0,\dots,N-1\}$ and $(\phi_h^n,\fatu_h^n)\in \mathcal{Q}_h\times\mathcal{V}_h^d$ find $(\mu_h^{n+1}, \phi_h^{n+1}, \fatu_h^{n+1}, p_h^{n+1}, r_h^{n+1})\in X_h$ such that
    \begin{align}
        \left\langle\mu_h^{n+1}, \psi_h\right\rangle_{\Omega} + \left\langle M(\phi_h^{n})\gradx\mu_h^{n+1}, \gradx\psi_h\right\rangle_{\Omega} &= \lambda\left[\left\langle\mu_h^{n+1}, \psi_h\right\rangle_{\Omega} + \left\langle\phi_h^{n+1/2}\fatu_h^{n+1}, \gradx\psi_h\right\rangle_{\Omega} - \left\langle\frac{\phi_h^{n+1}-\phi_h^{n}}{\deltat}\,, \psi_h\right\rangle_{\!\Omega}\,\right], \label{lambda_discrete_mu} \\[4mm]
        \left\langle\frac{\phi_h^{n+1}-\phi_h^{n}}{\deltat}\,, w_h\right\rangle_{\!\Omega} &= \lambda\left[\left\langle\mu_h^{n+1}, w_h\right\rangle_{\Omega} - \gamma\left\langle\gradx\phi_h^{n+1}, \gradx w_h\right\rangle_{\Omega} - \left\langle f_{\mathrm{vex}}'(\phi_h^{n+1}), w_h\right\rangle_{\Omega} \vphantom{\frac{\phi_h^{n+1}-\phi_h^{n}}{\deltat}}\right. \notag \\[2mm]
        &\phantom{\;= \lambda\;} - \left\langle f_{\mathrm{cav}}'(\phi_h^{n}), w_h\right\rangle_{\Omega} - \left\langle g_{\mathrm{vex}}'(\phi_h^{n+1}), w_h\right\rangle_{\Gamma} - \left\langle g_{\mathrm{cav}}'(\phi_h^{n}), w_h\right\rangle_{\Gamma} \notag \\[2mm]
        &\phantom{\;= \lambda\;} - s\left\langle\nabla_{\Gamma}\phi_h^{n+1}, \nabla_{\Gamma}w_h\right\rangle_{\Gamma} + \left\langle\frac{\phi_h^{n+1}-\phi_h^{n}}{\deltat}\,, w_h\right\rangle_{\!\Omega} \notag \\[2mm]
        &\phantom{\;= \lambda\;} \left. - \left\langle\frac{\phi_h^{n+1}-\phi_h^{n}}{\deltat}\,, w_h\right\rangle_{\!\Gamma}\,\right], \label{lambda_discrete_phi} \\[4mm]
        \left\langle\eta(\gammadot_h^{n},\phi_h^{n})\fatD{\fatu_h^{n+1}}, \fatD{\fatv_h}\right\rangle_{\Omega} &= \lambda\left[\frac{1}{2}\left\langle(\fatu_h^{n+1/2}\cdot\gradx)\fatv_h, \fatu_h^{n+1}\right\rangle_{\Omega} - \frac{1}{2}\left\langle(\fatu_h^{n+1/2}\cdot\gradx)\fatu_h^{n+1}, \fatv_h\right\rangle_{\Omega} \right. \notag \\[2mm]
        &\phantom{\;= \lambda\;} + \left\langle \fatF, \fatv_h\right\rangle_{\Omega} + \left\langle p_h^{n+1}, \divx(\fatv_h)\right\rangle_{\Omega} - \left\langle\phi_h^{n+1/2}\gradx\mu_h^{n+1}, \fatv_h\right\rangle_{\Omega} \notag \\[2mm] 
        &\phantom{\;= \lambda\;} \left. - \left\langle\frac{\fatu_h^{n+1}-\fatu_h^{n}}{\deltat}\,, \fatv_h\right\rangle_{\!\Omega}\,\right], \label{lambda_discrete_u} \\[4mm]
        \left\langle p_h^{n+1}, q_h\right\rangle_{\Omega} &= \lambda\left[\left\langle p_h^{n+1}, q_h\right\rangle_{\Omega} - \left\langle\divx(\fatu_h^{n+1}), q_h\right\rangle_{\Omega} - \left\langle r_h^{n+1}, q_h\right\rangle_{\Omega}\,\right] \label{lambda_discrete_p} \\[4mm]
        \left\langle r_h^{n+1}, s_h\right\rangle_{\Omega} &= \lambda\left[\left\langle r_h^{n+1}, s_h\right\rangle_{\Omega} + \left\langle p_h^{n+1}, s_h\right\rangle_{\Omega} \,\right] \label{lambda_discrete_r}
    \end{align}
    for all $(\psi_h,w_h,\fatv_h,q_h,s_h)\in X_h$.
\end{problem}

\begin{remark}
    We note that for $\lambda=1$ equations \eqref{lambda_discrete_mu}--\eqref{lambda_discrete_r} reduce to equations \eqref{discrete_phi}--\eqref{discrete_mean_value}. Thus, finding a solution to Problem \ref{problem} with $\lambda=1$ is equivalent to finding a solution to the numerical scheme.
\end{remark}

To be able to apply Schaefer's fixed point theorem, we need suitable a priori estimates for the solutions to Problem \ref{problem} that are uniform in $\lambda$. The main source for these estimates is the following energy inequality. 

\begin{lemma}[energy inequality]\label{lemma_discrete_energy_inequality}
    Let $\lambda\in(0,1]$, $(\phi_h^n,\fatu_h^n)\in\mathcal{Q}_h\times\mathcal{V}_h^d$ and $(\mu_h^{n+1}, \phi_h^{n+1},\fatu_h^{n+1},p_h^{n+1},r_h^{n+1})\in X_h$ such that equations \eqref{lambda_discrete_mu}--\eqref{lambda_discrete_r} are satisfied. Set $E_h^{k}=E(\phi_h^{k},\fatu_h^{k})$ for $k\in\{n,n+1\}$. Then there exists a constant $C_1=C_1(\underline{\eta},\Omega,d)>0$ such that
    \begin{align}
        \frac{E_h^{n+1}-E_h^{n}}{\deltat} &\leq - \frac{1}{2\deltat}\left|\left|\fatu_h^{n+1}-\fatu_h^{n}\right|\right|^2_{L^2(\Omega)} - \frac{\gamma}{2\deltat}\left|\left|\gradx(\phi_h^{n+1}-\phi_h^{n})\right|\right|^2_{L^2(\Omega)} \notag \\[2mm]
        &\phantom{=\;} - \frac{s}{2\deltat}\left|\left|\nabla_{\Gamma}(\phi_h^{n+1}-\phi_h^{n})\right|\right|^2_{L^2(\Gamma)} - \frac{1}{2\lambda}\,\underline{\eta}\,||\fatD{\fatu_h^{n+1}}||^2_{L^2(\Omega)} \notag \\[2mm]
        &\phantom{=\;} - \left\langle \frac{M(\phi_h^{n})}{\lambda}\,\gradx\mu_h^{n+1}, \gradx\mu_h^{n+1}\right\rangle_{\Omega} - \left|\left|\frac{\phi_h^{n+1}-\phi_h^{n}}{\deltat}\right|\right|^2_{L^2(\Gamma)} + C_1||\fatF||^2_{L^2(\Omega)} \notag \\[2mm]
        &\phantom{=\;} - \left(\frac{1}{\lambda}-1\right)\left(||\mu_h^{n+1}||^2_{L^2(\Omega)} + \left|\left|\frac{\phi_h^{n+1}-\phi_h^{n}}{\deltat}\right|\right|^2_{L^2(\Omega)} + ||p_h^{n+1}||^2_{L^2(\Omega)} + ||r_h^{n+1}||^2_{L^2(\Omega)}\right)\label{discrete_energy_inequality}
    \end{align}
\end{lemma}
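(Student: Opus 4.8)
The plan is to derive \eqref{discrete_energy_inequality} by testing each of the five equations \eqref{lambda_discrete_mu}--\eqref{lambda_discrete_r} with its ``natural'' argument and adding the resulting scalar identities. Writing $D_t v = (v^{n+1}-v^n)/\deltat$ for the backward difference quotient, I would test \eqref{lambda_discrete_mu} with $\psi_h=\mu_h^{n+1}$, \eqref{lambda_discrete_phi} with $w_h=D_t\phi$, \eqref{lambda_discrete_u} with $\fatv_h=\fatu_h^{n+1}$, \eqref{lambda_discrete_p} with $q_h=p_h^{n+1}$ and \eqref{lambda_discrete_r} with $s_h=r_h^{n+1}$. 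All of these are admissible, since $\mathcal{Q}_h$, $\mathcal{V}_h^d$ and $\mathcal{R}$ are vector spaces and $D_t\phi\in\mathcal{Q}_h$.

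Before combining I would record two ingredients. First, the polarisation identity $2a(a-b)=a^2-b^2+(a-b)^2$, applied to $\langle D_t\fatu,\fatu_h^{n+1}\rangle_\Omega$, to $\gamma\langle\gradx D_t\phi,\gradx\phi_h^{n+1}\rangle_\Omega$ and to $s\langle\nabla_\Gamma D_t\phi,\nabla_\Gamma\phi_h^{n+1}\rangle_\Gamma$: these relate the tested terms to the energy differences $\tfrac12(||\fatu_h^{n+1}||^2_{L^2(\Omega)}-||\fatu_h^{n}||^2_{L^2(\Omega)})$, $\tfrac{\gamma}{2}(||\gradx\phi_h^{n+1}||^2_{L^2(\Omega)}-||\gradx\phi_h^{n}||^2_{L^2(\Omega)})$ and their boundary analogue, while simultaneously producing the negative numerical-dissipation terms $-\tfrac{1}{2\deltat}||\fatu_h^{n+1}-\fatu_h^n||^2_{L^2(\Omega)}$, $-\tfrac{\gamma}{2\deltat}||\gradx(\phi_h^{n+1}-\phi_h^n)||^2_{L^2(\Omega)}$ and $-\tfrac{s}{2\deltat}||\nabla_\Gamma(\phi_h^{n+1}-\phi_h^n)||^2_{L^2(\Gamma)}$ of \eqref{discrete_energy_inequality}. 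Second, the convex/concave splitting of (A5): $f_{\mathrm{vex}}(\phi_h^{n+1})-f_{\mathrm{vex}}(\phi_h^n)\leq f_{\mathrm{vex}}'(\phi_h^{n+1})(\phi_h^{n+1}-\phi_h^n)$ and $f_{\mathrm{cav}}(\phi_h^{n+1})-f_{\mathrm{cav}}(\phi_h^n)\leq f_{\mathrm{cav}}'(\phi_h^n)(\phi_h^{n+1}-\phi_h^n)$, with the analogous pair for $g$. Integrated over $\Omega$ and $\Gamma$, these bound $\int_\Omega[f(\phi_h^{n+1})-f(\phi_h^n)]$ and $\int_\Gamma[g(\phi_h^{n+1})-g(\phi_h^n)]$ exactly by the potential terms produced by testing \eqref{lambda_discrete_phi} with $D_t\phi$ (the growth bounds of (A5) together with $\mathcal{Q}_h\subset C(\overline\Omega)$ guarantee integrability). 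Together these reduce $E_h^{n+1}-E_h^n$ to $\deltat$ times the sum of the tested equations plus the three dissipation terms.

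The core of the argument is the cancellation bookkeeping, which I expect to be the main effort. Testing \eqref{lambda_discrete_u} with $\fatu_h^{n+1}$ makes the two convective terms cancel by the skew-symmetric splitting built into the scheme, and solving each tested identity for its time-difference quantity ($\langle D_t\fatu,\fatu_h^{n+1}\rangle_\Omega$, $\langle\mu_h^{n+1},D_t\phi\rangle_\Omega$, $\langle p_h^{n+1},\divx\fatu_h^{n+1}\rangle_\Omega$) isolates a factor $1/\lambda$ in front of the dissipative quadratic forms $\langle\eta(\gammadot_h^n,\phi_h^n)\fatD{\fatu_h^{n+1}},\fatD{\fatu_h^{n+1}}\rangle_\Omega$ and $\langle M(\phi_h^n)\gradx\mu_h^{n+1},\gradx\mu_h^{n+1}\rangle_\Omega$. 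Upon summation, the transport coupling $-\langle\phi_h^{n+1/2}\gradx\mu_h^{n+1},\fatu_h^{n+1}\rangle_\Omega$ from the momentum balance cancels $\langle\phi_h^{n+1/2}\fatu_h^{n+1},\gradx\mu_h^{n+1}\rangle_\Omega$ from the $\mu$-equation; the pressure pairing $\langle p_h^{n+1},\divx\fatu_h^{n+1}\rangle_\Omega$ is resolved through \eqref{lambda_discrete_p}--\eqref{lambda_discrete_r} into $-(\tfrac1\lambda-1)(||p_h^{n+1}||^2_{L^2(\Omega)}+||r_h^{n+1}||^2_{L^2(\Omega)})$; and the leftover $(\tfrac1\lambda-1)$-multiples of $||\mu_h^{n+1}||^2_{L^2(\Omega)}$ and $||D_t\phi||^2_{L^2(\Omega)}$ assemble into the final penalty bracket, while the boundary term $\langle D_t\phi,D_t\phi\rangle_\Gamma$ supplies $-||D_t\phi||^2_{L^2(\Gamma)}$. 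The delicate part is that every identity must be divided by $\lambda$ and recombined so that all coupling terms vanish and the non-negative penalty terms carry the correct sign.

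Finally, only the forcing $\langle\fatF,\fatu_h^{n+1}\rangle_\Omega$ survives without a definite sign. I would control it by Young's inequality after a Korn--Poincar\'e inequality on $\mathcal{V}_h^d\subset H^1_{0,\mathrm{per}}(\Omega)^d$ (from Appendix \ref{sec_inequalities}) in the form $||\fatu_h^{n+1}||_{L^2(\Omega)}\leq C||\fatD{\fatu_h^{n+1}}||_{L^2(\Omega)}$: using $\eta\geq\underline{\eta}$ from (A4), I split off half of the viscous dissipation $\tfrac{\underline{\eta}}{\lambda}||\fatD{\fatu_h^{n+1}}||^2_{L^2(\Omega)}$ to absorb the forcing, which leaves $-\tfrac{1}{2\lambda}\underline{\eta}||\fatD{\fatu_h^{n+1}}||^2_{L^2(\Omega)}+C_1||\fatF||^2_{L^2(\Omega)}$ with $C_1=C_1(\underline{\eta},\Omega,d)$ (here $\lambda\leq 1$ removes the $\lambda$-dependence of the constant). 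Collecting all contributions and dividing by $\deltat$ yields exactly \eqref{discrete_energy_inequality}.
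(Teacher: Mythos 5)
Your proposal is correct and follows essentially the same route as the paper's proof: the same test functions $(\mu_h^{n+1}, (\phi_h^{n+1}-\phi_h^n)/\deltat, \fatu_h^{n+1}, p_h^{n+1}, r_h^{n+1})$, the same polarisation identities producing the numerical-dissipation terms, the same cancellations (skew-symmetric convection, transport coupling, pressure/Lagrange-multiplier pairing yielding the $(\tfrac{1}{\lambda}-1)$-penalty bracket), and the same Korn--Poincar\'e--Young absorption of $\langle\fatF,\fatu_h^{n+1}\rangle_\Omega$ with $\lambda\leq 1$. The only cosmetic difference is that you invoke the convexity/concavity gradient inequalities for $f_{\mathrm{vex}}, f_{\mathrm{cav}}, g_{\mathrm{vex}}, g_{\mathrm{cav}}$ directly, whereas the paper derives the identical bounds via the mean value theorem applied to the second-order Taylor remainder.
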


\begin{proof}
    We choose an arbitrary $n\in\{0,\dots,N-1\}$ and observe that
    \begin{align}
        \frac{E_h^{n+1}-E_h^{n}}{\deltat} &= \left\langle\frac{\fatu_h^{n+1}-\fatu_h^{n}}{\deltat}\,, \fatu_h^{n+1}\right\rangle_{\!\Omega} - \frac{1}{2\deltat}\left|\left|\fatu_h^{n+1}-\fatu_h^{n}\right|\right|^2_{L^2(\Omega)} + \gamma\left\langle\gradx\frac{\phi_h^{n+1}-\phi_h^{n}}{\deltat}\,, \gradx\phi_h^{n+1} \right\rangle_{\!\Omega} \notag \\[2mm]
        &\phantom{=\;} - \frac{\gamma}{2\deltat}\left|\left|\gradx(\phi_h^{n+1}-\phi_h^{n})\right|\right|^2_{L^2(\Omega)} - \frac{s}{2\deltat}\left|\left|\nabla_{\Gamma}(\phi_h^{n+1}-\phi_h^{n})\right|\right|^2_{L^2(\Gamma)} + \left\langle\frac{f(\phi_h^{n+1})-f(\phi_h^{n})}{\deltat}\,, 1\right\rangle_{\!\Omega}\notag \\[2mm]
        &\phantom{=\;} + s\left\langle\nabla_{\Gamma}\frac{\phi_h^{n+1}-\phi_h^{n}}{\deltat}\,, \nabla_{\Gamma}\phi_h^{n+1} \right\rangle_{\!\Gamma} + \left\langle\frac{g(\phi_h^{n+1})-g(\phi_h^{n})}{\deltat}\,, 1\right\rangle_{\!\Gamma}\,. \label{discrete_energy_inequality_1}
    \end{align}
    Then, plugging  
    \[(\psi_h, w_h, \fatv_h, q_h, s_h)=\left(\mu_h^{n+1}, \frac{\phi_h^{n+1}-\phi_h^{n}}{\deltat}\,, \fatu_h^{n+1}, p_h^{n+1}, r_h^{n+1}\right)\]
    into \eqref{lambda_discrete_mu}--\eqref{lambda_discrete_r} and combining the resulting equations with \eqref{discrete_energy_inequality_1}, we obtain
    \begin{align}
        \frac{E_h^{n+1}-E_h^{n}}{\deltat} &= - \frac{1}{2\deltat}\left|\left|\fatu_h^{n+1}-\fatu_h^{n}\right|\right|^2_{L^2(\Omega)} - \frac{\gamma}{2\deltat}\left|\left|\gradx(\phi_h^{n+1}-\phi_h^{n})\right|\right|^2_{L^2(\Omega)} \notag \\[2mm]
        &\phantom{=\;} - \frac{s}{2\deltat}\left|\left|\nabla_{\Gamma}(\phi_h^{n+1}-\phi_h^{n})\right|\right|^2_{L^2(\Gamma)} - \left\langle\frac{\eta(\gamma_h^{n},\phi_h^{n})}{\lambda}\,\fatD{\fatu_h^{n+1}}, \fatD{\fatu_h^{n+1}}\right\rangle_{\Omega} \notag \\[2mm]
        &\phantom{=\;} + \langle\fatF,\fatu_h^{n+1}\rangle_{\Omega} - \left\langle \frac{M(\phi_h^{n})}{\lambda}\,\gradx\mu_h^{n+1}, \gradx\mu_h^{n+1}\right\rangle_{\Omega} - \left|\left|\frac{\phi_h^{n+1}-\phi_h^{n}}{\deltat}\right|\right|^2_{L^2(\Gamma)} \notag \\[2mm]
        &\phantom{=\;} + \left\langle\frac{f(\phi_h^{n+1})-f(\phi_h^{n})}{\deltat}\,, 1\right\rangle_{\!\Omega} - \left\langle f_{\mathrm{vex}}'(\phi_h^{n+1})+f_{\mathrm{cav}}'(\phi_h^{n}), \frac{\phi_h^{n+1}-\phi_h^{n}}{\deltat}\right\rangle_{\!\Omega} \notag \\[2mm]
        &\phantom{=\;} + \left\langle\frac{g(\phi_h^{n+1})-g(\phi_h^{n})}{\deltat}\,, 1\right\rangle_{\!\Gamma} - \left\langle g_{\mathrm{vex}}'(\phi_h^{n+1}) + g_{\mathrm{cav}}'(\phi_h^{n}), \frac{\phi_h^{n+1}-\phi_h^{n}}{\deltat}\right\rangle_{\!\Gamma} \notag \\[2mm]
        &\phantom{=\;} - \left(\frac{1}{\lambda}-1\right)\left(||\mu_h^{n+1}||^2_{L^2(\Omega)} + \left|\left|\frac{\phi_h^{n+1}-\phi_h^{n}}{\deltat}\right|\right|^2_{L^2(\Omega)} + ||p_h^{n+1}||^2_{L^2(\Omega)} + ||r_h^{n+1}||^2_{L^2(\Omega)}\right)
        \,. \label{discrete_energy_inequality_2}
    \end{align}
    Using Hölder's inequality \eqref{hölder}, Poincaré's inequality \eqref{poincare}, Korn's inequality \eqref{korn} and Young's inequality \eqref{young}, we deduce that
    \begin{align}
        |\langle\fatF,\fatu_h^{n+1}\rangle_{\Omega}| &\leq ||\fatF||_{L^2(\Omega)}||\fatu_h^{n+1}||_{L^2(\Omega)} \leq C_P^{(2)}||\fatF||_{L^2(\Omega)}||\gradx\fatu_h^{n+1}||_{L^2(\Omega)} \leq \sqrt{2} C_P^{(2)} ||\fatF||_{L^2(\Omega)}||\fatD{\fatu_h^{n+1}}||_{L^2(\Omega)} \notag \\[2mm]
        &\leq \frac{\lambda(C_P^{(2)})^2}{\underline{\eta}}\,||\fatF||^2_{L^2(\Omega)} + \frac{1}{2\lambda}\,\underline{\eta}\,||\fatD{\fatu_h^{n+1}}||^2_{L^2(\Omega)} \leq \frac{(C_P^{(2)})^2}{\underline{\eta}}\,||\fatF||^2_{L^2(\Omega)} + \frac{1}{2\lambda}\,\underline{\eta}\,||\fatD{\fatu_h^{n+1}}||^2_{L^2(\Omega)} \,.\label{F_estimate}
    \end{align}
    Moreover, due to assumption (\hyperlink{A4}{A4}),
    \begin{align}
        - \left\langle\frac{\eta(\gamma_h^{n},\phi_h^{n})}{\lambda}\,\fatD{\fatu_h^{n+1}}, \fatD{\fatu_h^{n+1}}\right\rangle_{\Omega} \leq -\frac{1}{\lambda}\,\underline{\eta}\,||\fatD{\fatu_h^{n+1}}||^2_{L^2(\Omega)}. \label{eta_estimate}
    \end{align}
    Next, we apply the mean value theorem to obtain functions $\zeta_{1,h}^{n},\zeta_{2,h}^{n}:\Omega\to\reals$ such that 
    \begin{align}
        f_{\mathrm{vex}}(\phi_h^{n+1}) - f_{\mathrm{vex}}(\phi_h^{n}) - f_{\mathrm{vex}}'(\phi_h^{n+1})(\phi_h^{n+1}-\phi_h^{n}) &= -\frac{1}{2}\,f_{\mathrm{vex}}''(\zeta_{1,h}^{n})(\phi_h^{n+1}-\phi_h^{n})^2\leq 0\,, \label{fvex_estimate} \\[2mm]
        f_{\mathrm{cav}}(\phi_h^{n+1}) - f_{\mathrm{cav}}(\phi_h^{n}) - f_{\mathrm{cav}}'(\phi_h^{n})(\phi_h^{n+1}-\phi_h^{n}) &= \frac{1}{2}\,f_{\mathrm{cav}}''(\zeta_{2,h}^{n})(\phi_h^{n+1}-\phi_h^{n})^2\leq 0\,. \label{fcav_estimate}
    \end{align}
    Analogous inequalities hold for $g_{\mathrm{vex}}, g_{\mathrm{cav}}$. Combining them and \eqref{F_estimate}--\eqref{fcav_estimate} with \eqref{discrete_energy_inequality_2}, we obtain \eqref{discrete_energy_inequality}.
\end{proof}

\begin{corollary}\label{corollary_apriori_estimates_1}
    Let the assumptions of Lemma \ref{lemma_discrete_energy_inequality} be satisfied. In addition, let $\uM_h^n=\inf_{\fatx\,\in\,\Omega}\{M(\phi_h^n(\fatx))\}>0$ and $C_2 = C_1\deltat ||\fatF||^2_{L^2(\Omega)} + E_h^n + c(|\Omega| + |\Gamma|)$. Then there exists a constant 
    \[C_3 = C_3(\deltat,||\fatF||_{L^2(\Omega)},\gamma,s,\uM_h^n,\underline{\eta},\overline{\eta},c,c',\Omega,\Gamma,d,E_h^n,\beta,||\phi_h^n||_{H^1(\Omega)},||\fatu_h^n||_{H^1(\Omega)})\]
    such that
    \begin{align}
        ||\fatu_h^{n+1}||^2_{H^1(\Omega)} + ||\phi_h^{n+1}||^2_{H^1(\Omega)} + ||\mu_h^{n+1}||^2_{H^1(\Omega)} + ||p_h^{n+1}||^2_{L^2(\Omega)} + ||r_h^{n+1}||^2_{L^2(\Omega)} \leq C_3\,. \label{discrete_stability}    
    \end{align}
\end{corollary}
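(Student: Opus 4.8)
The plan is to convert the energy inequality \eqref{discrete_energy_inequality} into bounds on the individual quantities, working outward from those it controls directly and uniformly in $\lambda$ to those it only sees through the degenerate $\left(\frac{1}{\lambda}-1\right)$-weighted terms near $\lambda=1$.

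First I would multiply \eqref{discrete_energy_inequality} by $\deltat$ and bring every non-positive right-hand term to the left, so that $E_h^{n+1}$ together with all dissipation terms and all $\left(\frac{1}{\lambda}-1\right)$-weighted terms is bounded by $C_1\deltat||\fatF||^2_{L^2(\Omega)}+E_h^n$. Since $f,g\geq -c$ by (A5), the potential part of $E_h^{n+1}$ is bounded below by $-c(|\Omega|+|\Gamma|)$, which isolates $\frac12||\fatu_h^{n+1}||^2_{L^2(\Omega)}+\frac{\gamma}{2}||\gradx\phi_h^{n+1}||^2_{L^2(\Omega)}+\frac{s}{2}||\nabla_\Gamma\phi_h^{n+1}||^2_{L^2(\Gamma)}\leq C_2$. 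Reading off the remaining genuinely dissipative terms yields, uniformly in $\lambda\in(0,1]$, bounds on $||\fatD{\fatu_h^{n+1}}||_{L^2(\Omega)}$, on $||\gradx\mu_h^{n+1}||_{L^2(\Omega)}$ — here the hypothesis $\uM_h^n>0$ is essential, as the relevant term satisfies $\frac{1}{\lambda}\langle M(\phi_h^n)\gradx\mu_h^{n+1},\gradx\mu_h^{n+1}\rangle_{\Omega}\geq \frac{\uM_h^n}{\lambda}||\gradx\mu_h^{n+1}||^2_{L^2(\Omega)}$ — and on $||\frac{\phi_h^{n+1}-\phi_h^n}{\deltat}||_{L^2(\Gamma)}$. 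Korn's inequality \eqref{korn} then upgrades the bound on $\fatD{\fatu_h^{n+1}}$ to a bound on $||\fatu_h^{n+1}||_{H^1(\Omega)}$.

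The delicate point is that (A5) gives no coercivity of $f$, so $||\phi_h^{n+1}||_{L^2(\Omega)}$ cannot be recovered from $\int_\Omega f(\phi_h^{n+1})\,\dx$. I would instead control it through the boundary: writing $\phi_h^{n+1}=\phi_h^n+\deltat\,\frac{\phi_h^{n+1}-\phi_h^n}{\deltat}$, the trace theorem (Theorem \ref{thm_trace}) bounds $||\phi_h^n||_{L^2(\Gamma)}$ by the datum $||\phi_h^n||_{H^1(\Omega)}$, while the previous step bounds $\deltat\,||\frac{\phi_h^{n+1}-\phi_h^n}{\deltat}||_{L^2(\Gamma)}$; hence $||\phi_h^{n+1}||_{L^2(\Gamma)}$ is bounded, and with the $\nabla_\Gamma$-bound so is $||\phi_h^{n+1}||_{H^1(\Gamma)}$. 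A Poincaré inequality estimating $||\phi_h^{n+1}||_{L^2(\Omega)}$ by $||\gradx\phi_h^{n+1}||_{L^2(\Omega)}$ and the trace $||\phi_h^{n+1}||_{L^2(\Gamma)}$ then closes the bound on $||\phi_h^{n+1}||_{H^1(\Omega)}$. For $\mu$ I would test \eqref{lambda_discrete_mu} and \eqref{lambda_discrete_phi} with the constant function $1$ and eliminate $\langle\frac{\phi_h^{n+1}-\phi_h^n}{\deltat},1\rangle_\Omega$, which gives $\langle\mu_h^{n+1},1\rangle_\Omega=(1+a^2)^{-1}R$ with $a=\frac{1}{\lambda}-1\geq 0$ and $R$ the sum of the potential-derivative terms and the boundary time-difference term. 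Since $(1+a^2)^{-1}\leq 1$ uniformly in $\lambda$ and, by the embeddings $H^1(\Omega)\hookrightarrow L^3(\Omega)$, $H^1(\Gamma)\hookrightarrow L^q(\Gamma)$ together with the growth bounds in (A5), $R$ is now controlled by the $\phi$-bounds just obtained, $\langle\mu_h^{n+1},1\rangle_\Omega$ is bounded; the Poincaré--Wirtinger inequality and the $\gradx\mu_h^{n+1}$-bound then give $||\mu_h^{n+1}||_{H^1(\Omega)}$.

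The quantities $p_h^{n+1}$ and $r_h^{n+1}$ are seen only by the $\left(\frac{1}{\lambda}-1\right)$-weighted terms, which vanish as $\lambda\to 1$; handling this degeneration uniformly is the main obstacle, and I would split on the size of $\lambda$. For $\lambda\leq\frac12$ one has $\frac{1}{\lambda}-1\geq 1$, so the weighted terms in \eqref{discrete_energy_inequality} bound $||p_h^{n+1}||_{L^2(\Omega)}$ and $||r_h^{n+1}||_{L^2(\Omega)}$ outright. For $\lambda\geq\frac12$ I would invoke the discrete inf-sup (LBB) stability of the Taylor--Hood pair $(\mathcal{V}_h,\mathcal{Q}_h)$, with constant $\beta>0$ independent of $h$ by the uniformity of $\{\mathcal{T}_h\}$: rearranging \eqref{lambda_discrete_u} to isolate $\langle p_h^{n+1},\divx\fatv_h\rangle$ and estimating the viscous (via (A4)), convective, forcing, capillary and time-difference terms through the $H^1$-bounds on $\fatu_h^{n+1},\phi_h^{n+1},\mu_h^{n+1}$ — using Hölder's inequality \eqref{hölder} and the Sobolev embeddings — bounds $\sup_{||\fatv_h||_{H^1(\Omega)}=1}\langle p_h^{n+1},\divx\fatv_h\rangle$, and division by $\lambda\beta$ (with $\frac1\lambda\leq 2$) yields $||p_h^{n+1}||_{L^2(\Omega)}$. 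Finally, testing \eqref{lambda_discrete_p} with the constant $r_h^{n+1}$ and using $\int_\Omega\divx(\fatu_h^{n+1})\,\dx=0$ gives $||r_h^{n+1}||_{L^2(\Omega)}\leq a\,||p_h^{n+1}||_{L^2(\Omega)}\leq ||p_h^{n+1}||_{L^2(\Omega)}$ for $\lambda\geq\frac12$. Taking the maximum of the constants arising in the two regimes produces a single $\lambda$-independent $C_3$ and establishes \eqref{discrete_stability}.
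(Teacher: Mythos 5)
Most of your argument is sound and runs parallel to the paper's proof: the same cascade from the energy inequality \eqref{discrete_energy_inequality} (Korn for $\fatu_h^{n+1}$; recovery of $\|\phi_h^{n+1}\|_{H^1(\Omega)}$ through the boundary via $\phi_h^{n+1}=\phi_h^n+\deltat\,\frac{\phi_h^{n+1}-\phi_h^n}{\deltat}$, the trace theorem and a Poincar\'e inequality with boundary term; the mean of $\mu_h^{n+1}$ from constant test functions plus the growth conditions in (\hyperlink{A5}{A5}); inf-sup for the pressure; constant tests for $r_h^{n+1}$). Two of your micro-routes genuinely differ from the paper and are correct: your exact elimination $\langle\mu_h^{n+1},1\rangle_\Omega=(1+a^2)^{-1}R$, obtained by combining \eqref{lambda_discrete_mu} and \eqref{lambda_discrete_phi} with constant tests, is algebraically right and replaces the paper's device of keeping the $\bigl(\frac{1}{\lambda}-1\bigr)$-weighted term and controlling it by \eqref{estimates4}; and your case split $\lambda\lessgtr\frac12$ replaces the paper's single weighted decomposition \eqref{p_split}, in which the factor $\frac{\overline{\eta}}{\lambda}$ from the viscous term in \eqref{lambda_discrete_u} is absorbed against the outer factor $2\lambda$ using the $\lambda$-improved dissipation bound $\frac1\lambda\|\fatD{\fatu_h^{n+1}}\|^2_{L^2(\Omega)}\leq 2C_2/(\underline{\eta}\deltat)$ — so the paper never needs to distinguish regimes. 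Your $r$-estimate $\|r_h^{n+1}\|_{L^2(\Omega)}\leq a\,\|p_h^{n+1}\|_{L^2(\Omega)}$ for $\lambda\geq\frac12$ is also a valid alternative to the paper's Young-plus-\eqref{estimates4} argument \eqref{est_r_final}.

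There is, however, one concrete hole: in the regime $\lambda\geq\frac12$ you assert that bounding $\sup_{\|\fatv_h\|_{H^1(\Omega)}=1}\langle p_h^{n+1},\divx\fatv_h\rangle$ and ``division by $\lambda\beta$ yields $\|p_h^{n+1}\|_{L^2(\Omega)}$''. It does not: the inf-sup condition (Theorem \ref{thm_inf-sup}, inequality \eqref{inf-sup}) controls only the mean-free part $\bigl\|p_h^{n+1}-\frac{1}{|\Omega|}\int_\Omega p_h^{n+1}\,\dx\bigr\|_{L^2(\Omega)}$, since $\langle c,\divx\fatv_h\rangle_\Omega=0$ for constants $c$ and $\fatv_h\in\mathcal{V}_h^d$. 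Because $\mathcal{Q}_h$ contains constants and the zero-mean constraint enters only weakly through the Lagrange multiplier $r_h$ — and in Problem \ref{problem} even that constraint is $\lambda$-deformed — the mean of $p_h^{n+1}$ must be bounded by a separate argument, which your proof never supplies. The paper does this in \eqref{est_p_mean} by testing \eqref{lambda_discrete_r} with $s_h=\langle p_h^{n+1},1\rangle_\Omega$ and invoking \eqref{estimates4}. The repair also fits your own toolkit: testing \eqref{lambda_discrete_r} with $s_h=1$ gives $\langle p_h^{n+1},1\rangle_\Omega=a\langle r_h^{n+1},1\rangle_\Omega$, while testing \eqref{lambda_discrete_p} with $q_h=1$ and using $\int_\Omega\divx(\fatu_h^{n+1})\,\dx=0$ gives $\langle r_h^{n+1},1\rangle_\Omega=-a\langle p_h^{n+1},1\rangle_\Omega$; together $(1+a^2)\langle p_h^{n+1},1\rangle_\Omega=0$, so the discrete pressure in fact has exactly zero mean (and $r_h^{n+1}\equiv 0$) for every $\lambda\in(0,1]$, after which your inf-sup step closes the bound. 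As written, though, the pressure estimate — and with it the boundedness of the fixed-point set needed for Schaefer's theorem — is not established.
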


\begin{proof}
    The following estimates follow immediately from \eqref{discrete_energy_inequality} and Assumption (\hyperlink{A5}{A5}):
    \begin{gather}
        ||\fatu_h^{n+1}||^2_{L^2(\Omega)} \leq 2C_2, \qquad\qquad \left|\left|\frac{\fatu_h^{n+1}-\fatu_h^n}{\deltat}\right|\right|^2_{L^2(\Omega)} \leq \frac{2C_2}{(\deltat)^2}, \label{estimates1} \\[2mm]
        ||\fatD{\fatu_h^{n+1}}||^2_{L^2(\Omega)}\leq\frac{1}{\lambda}\,||\fatD{\fatu_h^{n+1}}||^2_{L^2(\Omega)} \leq \frac{2C_2}{\underline{\eta}\deltat}, \qquad\qquad ||\gradx\mu_h^{n+1}||^2_{L^2(\Omega)} \leq \frac{C_2}{\uM_h^n\deltat}, \label{estimates2} \\[2mm]
        ||\gradx\phi_h^{n+1}||^2_{L^2(\Omega)} \leq \frac{2C_2}{\gamma}, \qquad\qquad \left|\left|\frac{\phi_h^{n+1}-\phi_h^n}{\deltat}\right|\right|^2_{L^2(\Gamma)} \leq \frac{C_2}{\deltat}, \qquad\qquad ||\nabla_{\Gamma}\phi_h^{n+1}||^2_{L^2(\Gamma)} \leq \frac{2C_2}{s}, \label{estimates3} \\[2mm]
        \left(\frac{1}{\lambda}-1\right)\max\left\{||\mu_h^{n+1}||^2_{L^2(\Omega)}, \left|\left|\frac{\phi_h^{n+1}-\phi_h^{n}}{\deltat}\right|\right|^2_{L^2(\Omega)}, ||p_h^{n+1}||^2_{L^2(\Omega)}, ||r_h^{n+1}||^2_{L^2(\Omega)}\right\} \leq \frac{C_2}{\deltat}. \label{estimates4}
    \end{gather}
    Using Korn's inequality \eqref{korn}, we deduce from the first estimates in \eqref{estimates1} and \eqref{estimates2}
    that
    \begin{align}
        ||\fatu_h^{n+1}||^2_{H^1(\Omega)} &= ||\fatu_h^{n+1}||^2_{L^2(\Omega)} + ||\gradx\fatu_h^{n+1}||^2_{L^2(\Omega)} \leq ||\fatu_h^{n+1}||^2_{L^2(\Omega)} + 2||\fatD{\fatu_h^{n+1}}||^2_{L^2(\Omega)} \leq \left(2 + \frac{4}{\underline{\eta}\deltat}\right)C_2 \label{est_u_final}.
    \end{align}
    Next, combining the first two estimates in \eqref{estimates3}
    with Poincaré's inequality \eqref{poincare_bdry}, Young's inequality \eqref{young} and the trace inequality \eqref{trace_inequality}, we deduce that
    \begin{align}
        ||\phi_h^{n+1}||^2_{H^1(\Omega)} &\leq C_P^{(3)}\left(||\gradx\phi_h^{n+1}||^2_{L^2(\Omega)} + ||\phi_h^{n+1}||^2_{L^2(\Gamma)}\right) \notag \\[2mm]
        &= C_P^{(3)}\left(||\gradx\phi_h^{n+1}||^2_{L^2(\Omega)} + \left|\left|\frac{\phi_h^{n+1}-\phi_h^n}{\deltat}\,\deltat + \phi_h^n\right|\right|^2_{L^2(\Gamma)}\right) \notag \\[2mm]
        &\leq C_P^{(3)}\left(||\gradx\phi_h^{n+1}||^2_{L^2(\Omega)} + 2\left((\deltat)^2\left|\left|\frac{\phi_h^{n+1}-\phi_h^n}{\deltat}\right|\right|^2_{L^2(\Gamma)} + ||\phi_h^n||^2_{L^2(\Gamma)}\right)\right) \notag \\[2mm]
        &\leq 2C_P^{(3)}\left[\left(\frac{1}{\gamma} + \deltat\right)C_2 + C^2_{\mathrm{tr}}\,||\phi_h^n||^2_{H^1(\Omega)}\right].
        \label{est_phi_final}
    \end{align}
    Next, we estimate $||\mu_h^{n+1}||^2_{H^1(\Omega)}$. Similarly to \eqref{est_phi_final}, we observe that
    \begin{align}
        ||\phi_h^{n+1}||^2_{H^1(\Gamma)} \leq 2\left[\left(\frac{1}{s} + \deltat\right)C_2 + C^2_{\mathrm{tr}}\,||\phi_h^n||^2_{H^1(\Omega)}\right]. \label{est_phi_bdry}
    \end{align}
    Then, testing \eqref{lambda_discrete_phi} with $w_h=\left\langle\mu_h^{n+1},1\right\rangle_\Omega$ and using Hölder's inequality \eqref{hölder} and Young's inequality \eqref{young} as well as Assumption (\hyperlink{A5}{A5}), we obtain
    \begin{align}
        \left|\left\langle\mu_h^{n+1},1\right\rangle_\Omega\right|^2 &\leq \left|\left\langle\mu_h^{n+1},1\right\rangle_\Omega\right|\,|\Omega|^{1/2}\left[\left(\frac{1}{\lambda}-1\right)\left|\left|\frac{\phi_h^{n+1}-\phi_h^n}{\deltat}\right|\right|_{L^2(\Omega)} + ||f'_{\mathrm{vex}}(\phi_h^{n+1})||_{L^2(\Omega)} + ||f'_{\mathrm{cav}}(\phi_h^{n})||_{L^2(\Omega)}\right] \notag \\[2mm]
        &\phantom{=\;} + \left|\left\langle\mu_h^{n+1},1\right\rangle_\Omega\right|\,|\Gamma|^{1/2}\left[\left|\left|\frac{\phi_h^{n+1}-\phi_h^n}{\deltat}\right|\right|_{L^2(\Gamma)} + ||g'_{\mathrm{vex}}(\phi_h^{n+1})||_{L^2(\Gamma)} + ||g'_{\mathrm{cav}}(\phi_h^{n})||_{L^2(\Gamma)}\right] \notag \\[2mm]
        &\leq \frac{1}{2}\left(\frac{1}{\lambda}-1\right)\left[|\Omega|\,\left|\left\langle\mu_h^{n+1},1\right\rangle_\Omega\right|^2 + \left|\left|\frac{\phi_h^{n+1}-\phi_h^n}{\deltat}\right|\right|^2_{L^2(\Omega)}\right] + \frac{5}{2}\,|\Omega|\,||f'_{\mathrm{vex}}(\phi_h^{n+1})||^2_{L^2(\Omega)} \notag \\[2mm]
        &\phantom{=\;} + \frac{5}{2}\,|\Omega|\,||f'_{\mathrm{cav}}(\phi_h^{n})||^2_{L^2(\Omega)} + \frac{5}{2}\,|\Gamma|\left|\left|\frac{\phi_h^{n+1}-\phi_h^n}{\deltat}\right|\right|^2_{L^2(\Gamma)} + \frac{5}{2}\,|\Gamma|\,||g'_{\mathrm{vex}}(\phi_h^{n+1})||^2_{L^2(\Gamma)}\notag \\[2mm]
        &\phantom{=\;} + \frac{5}{2}\,|\Gamma|\,||g'_{\mathrm{cav}}(\phi_h^{n})||^2_{L^2(\Gamma)} + \frac{1}{2}\left|\left\langle\mu_h^{n+1},1\right\rangle_\Omega\right|^2 \notag \\[2mm]
        &\leq \frac{1}{2}\left(\frac{1}{\lambda}-1\right)\left[|\Omega|^2\,||\mu_h^{n+1}||^2_{L^2(\Omega)} + \left|\left|\frac{\phi_h^{n+1}-\phi_h^n}{\deltat}\right|\right|^2_{L^2(\Omega)}\right] + 10(c')^2|\Omega|\left(|\Omega| + ||\phi_h^{n+1}||^6_{L^6(\Omega)}\right) \notag \\[2mm]
        &\phantom{=\;} + \frac{5}{2}\,|\Gamma|\left|\left|\frac{\phi_h^{n+1}-\phi_h^n}{\deltat}\right|\right|^2_{L^2(\Gamma)} + 10(c')^2|\Gamma|\left(|\Gamma| + ||\phi_h^{n+1}||^{q}_{L^{q}(\Gamma)}\right) + \frac{1}{2}\left|\left\langle\mu_h^{n+1},1\right\rangle_\Omega\right|^2. \notag
    \end{align}
    Rearranging terms and applying Sobolev's inequalities \eqref{sobolev} and \eqref{sobolev_bdry} as well as the second inequality in \eqref{estimates3} and the inequalities \eqref{estimates4}, \eqref{est_phi_final}, \eqref{est_phi_bdry} we obtain
    \begin{align}
        \left|\left\langle\mu_h^{n+1},1\right\rangle_\Omega\right|^2 &\leq \left(\frac{1}{\lambda}-1\right)\left[|\Omega|^2\,||\mu_h^{n+1}||^2_{L^2(\Omega)} + \left|\left|\frac{\phi_h^{n+1}-\phi_h^n}{\deltat}\right|\right|^2_{L^2(\Omega)}\right] + 20(c')^2|\Omega|\left(|\Omega| + \left[C^{(1)}_S||\phi_h^{n+1}||_{H^1(\Omega)}\right]^6\right) \notag \\[2mm]
        &\phantom{=\;} + 5|\Gamma|\left|\left|\frac{\phi_h^{n+1}-\phi_h^n}{\deltat}\right|\right|^2_{L^2(\Gamma)} + 20(c')^2|\Gamma|\left(|\Gamma| + \left[C^{(2)}_S||\phi_h^{n+1}||_{H^1(\Gamma)}\right]^{q}\right) \notag \\[2mm]
        &\leq \frac{C_2}{\deltat}\,(|\Omega|^2 + 5|\Gamma| + 1) + 20(c')^2(|\Omega|^2+|\Gamma|^2) \notag \\
        &\phantom{=\;} + 20(c')^2|\Omega|\left(2C_P^{(3)}(C_S^{(1)})^2\left[\left(\frac{1}{\gamma} + \deltat\right)C_2 + C^2_{\mathrm{tr}}\,||\phi_h^n||^2_{H^1(\Omega)}\right]\right)^3 \notag \\[2mm]
        &\phantom{=\;} + 20(c')^2|\Gamma|\left(2(C_S^{(2)})^2\left[\left(\frac{1}{s} + \deltat\right)C_2 + C^2_{\mathrm{tr}}\,||\phi_h^n||^2_{H^1(\Omega)}\right]\right)^{q/2}. \notag 
    \end{align}
    Combining this with the second inequality in \eqref{estimates2} and Poincaré's inequality \eqref{mean_poincare}, we obtain
    \begin{align}
        ||\mu_h^{n+1}||^2_{H^1(\Omega)} & \leq C_P^{(1)}\left(||\gradx\mu_h^{n+1}||^2_{L^2(\Omega)} + \left|\left\langle\mu_h^{n+1},1\right\rangle_\Omega\right|^2\right) \notag \\[2mm]
        &\leq C_P^{(1)}\left[\frac{C_2}{\uM_h^n\deltat} + \frac{C_2}{\deltat}\,(|\Omega|^2 + 5|\Gamma| + 1) + 20(c')^2(|\Omega|^2+|\Gamma|^2) \right. \notag \\[2mm]
        &\phantom{=\;} \qquad\quad + 20(c')^2|\Omega|\left(2C_P^{(3)}(C_S^{(1)})^2\left[\left(\frac{1}{\gamma} + \deltat\right)C_2 + C^2_{\mathrm{tr}}\,||\phi_h^n||^2_{H^1(\Omega)}\right]\right)^3 \notag \\[2mm]
        &\phantom{=\;} \qquad\quad \left. + 20(c')^2|\Gamma|\left(2(C_S^{(2)})^2\left[\left(\frac{1}{s} + \deltat\right)C_2 + C^2_{\mathrm{tr}}\,||\phi_h^n||^2_{H^1(\Omega)}\right]\right)^{q/2}\right]. \label{est_mu_final}
    \end{align}
    To estimate the pressure, we start with the initial observation that
    \begin{align}
        ||p_h^{n+1}||^2_{L^2(\Omega)} &\leq \lambda ||p_h^{n+1}||^2_{L^2(\Omega)} + \left(\frac{1}{\lambda} - 1\right)||p_h^{n+1}||^2_{L^2(\Omega)} \notag \\[2mm]
        &\leq 2\lambda \left|\left|p_h^{n+1} - \frac{1}{|\Omega|}\int_\Omega p_h^{n+1}\;\dx\right|\right|^2_{L^2(\Omega)} + \frac{2}{|\Omega|}\left|\left\langle p_h^{n+1}, 1\right\rangle_\Omega\right|^2 + \left(\frac{1}{\lambda} - 1\right)||p_h^{n+1}||^2_{L^2(\Omega)}\,. \label{p_split}
    \end{align}
    The last term on the right-hand side of \eqref{p_split} is controlled by \eqref{estimates4}.
    To estimate the second term, we test \eqref{lambda_discrete_r} with $s_h = \left\langle p_h^{n+1}, 1\right\rangle_\Omega$. Then we apply Young's inequality \eqref{young} and Hölder's inequality \eqref{hölder} and use
    \eqref{estimates4}
    to obtain
    \begin{align}
        \left|\left\langle p_h^{n+1}, 1\right\rangle_\Omega\right|^2 &= \left(\frac{1}{\lambda} - 1\right) \left|\left\langle p_h^{n+1}, 1\right\rangle_\Omega\right| \left|\left\langle r_h^{n+1}, 1\right\rangle_\Omega\right| \leq \frac{1}{2}\left(\frac{1}{\lambda} - 1\right) \left(\left|\left\langle p_h^{n+1}, 1\right\rangle_\Omega\right|^2 + \left|\left\langle r_h^{n+1}, 1\right\rangle_\Omega\right|^2\right) \notag \\[2mm]
        &\leq \frac{1}{2}\left(\frac{1}{\lambda} - 1\right)|\Omega| \left(||p_h^{n+1}||^2_{L^2(\Omega)} + ||r_h^{n+1}||^2_{L^2(\Omega)}\right) 
        \leq \frac{|\Omega|C_2}{\deltat}. \label{est_p_mean}
    \end{align}
    To estimate the first term on the right-hand side of \eqref{p_split}, we use the fact that the $P_2-P_1$ elements we have chosen for the velocity-pressure pair are inf-sup stable (cf. Theorem \eqref{thm_inf-sup}), whence
    \begin{align}
        \left|\left|p_h^{n+1} - \frac{1}{|\Omega|}\int_\Omega p_h^{n+1}\;\dx\right|\right|^2_{L^2(\Omega)} \leq \frac{1}{\beta^2}\sup_{\fatw_h\,\in\,\mathcal{V}_h^d\backslash\{\bm{0}\}}\left\{\left|\int_\Omega\frac{\divx(\fatw_h)\,p_h^{n+1}}{||\fatw_h||_{H^1(\Omega)}}\;\dx \right|^2\right\}. \label{est1_inf-sup}
    \end{align}
    To estimate the right-hand side of this inequality, we consider an arbitrary test function $\fatv_h\in\mathcal{V}_h^d$ in \eqref{lambda_discrete_u}. Using Hölder's inequality \eqref{hölder}, Sobolev's inequality \eqref{sobolev} and Assumption (\hyperlink{A4}{A4}), we deduce that
    \begin{align}
        \left|\left\langle p_h^{n+1}, \divx(\fatv_h)\right\rangle\right| &\leq ||\fatu_h^{n+1/2}||_{L^4(\Omega)}||\gradx\fatv_h||_{L^2(\Omega)}||\fatu_h^{n+1}||_{L^4(\Omega)} + ||\fatu_h^{n+1/2}||_{L^4(\Omega)}||\gradx\fatu_h^{n+1}||_{L^2(\Omega)}||\fatv_h||_{L^4(\Omega)} \notag \\[2mm]
        &\phantom{=\;} + ||\fatF||_{L^2(\Omega)}||\fatv_h||_{L^2(\Omega)} + \frac{\overline{\eta}}{\lambda}\,||\fatD{\fatu_h^{n+1}}||_{L^2(\Omega)}||\fatD{\fatv_h}||_{L^2(\Omega)} \notag \\[2mm]
        &\phantom{=\;} + ||\phi_h^{n+1/2}||_{L^4(\Omega)}||\gradx\mu_h^{n+1}||_{L^2(\Omega)}||\fatv_h||_{L^4(\Omega)} + \left|\left|\frac{\fatu_h^{n+1}-\fatu_h^n}{\deltat}\right|\right|_{L^2(\Omega)}||\fatv_h||_{L^2(\Omega)} \notag \\[2mm]
        &\leq 2(C_S^{(1)})^2 ||\fatu_h^{n+1/2}||_{H^1(\Omega)}||\fatv_h||_{H^1(\Omega)}||\fatu_h^{n+1}||_{H^1(\Omega)} + ||\fatF||_{L^2(\Omega)}||\fatv_h||_{H^1(\Omega)} \notag \\[2mm]
        &\phantom{=\;} + \frac{\overline{\eta}}{\lambda}\,||\fatD{\fatu_h^{n+1}}||_{L^2(\Omega)}||\fatv_h||_{H^1(\Omega)} + (C_S^{(1)})^2||\phi_h^{n+1/2}||_{H^1(\Omega)}||\gradx\mu_h^{n+1}||_{L^2(\Omega)}||\fatv_h||_{H^1(\Omega)} \notag \\[2mm]
        &\phantom{=\;} + \left|\left|\frac{\fatu_h^{n+1}-\fatu_h^n}{\deltat}\right|\right|_{L^2(\Omega)}||\fatv_h||_{H^1(\Omega)}. \notag 
    \end{align}
    Consequently, \eqref{est_u_final}, \eqref{estimates2}, \eqref{est_phi_final} and the second inequality in \eqref{estimates1}
    yield
    \begin{align}
        \frac{1}{\beta^2}&\sup_{\fatw_h\,\in\,\mathcal{V}_h^d\backslash\{\bm{0}\}}\left\{\left|\int_\Omega\frac{\divx(\fatw_h)\,p_h^{n+1}}{||\fatw_h||_{H^1(\Omega)}}\;\dx \right|^2\right\} \notag \\[2mm]
        &\leq \frac{1}{\beta^2}\left((C_S^{(1)})^2 \left[||\fatu_h^{n}||_{H^1(\Omega)} + ||\fatu_h^{n+1}||_{H^1(\Omega)}\right]||\fatu_h^{n+1}||_{H^1(\Omega)} + ||\fatF||_{L^2(\Omega)} + \frac{\overline{\eta}}{\lambda}\,||\fatD{\fatu_h^{n+1}}||_{L^2(\Omega)} \vphantom{\left|\left|\frac{\fatu_h^{n+1}-\fatu_h^n}{\deltat}\right|\right|_{L^2(\Omega)}}\right. \notag \\[2mm]
        &\phantom{=\;\;\;} \qquad \left. +\; (C_S^{(1)})^2\left[||\phi_h^{n}||_{H^1(\Omega)} + ||\phi_h^{n+1}||_{H^1(\Omega)}\right]||\gradx\mu_h^{n+1}||_{L^2(\Omega)} + \left|\left|\frac{\fatu_h^{n+1}-\fatu_h^n}{\deltat}\right|\right|_{L^2(\Omega)}\right)^2 \notag \\[2mm]
        &\leq \frac{49}{\beta^2}\left((C_S^{(1)})^4 \left[||\fatu_h^{n}||^2_{H^1(\Omega)} + ||\fatu_h^{n+1}||^2_{H^1(\Omega)}\right]||\fatu_h^{n+1}||^2_{H^1(\Omega)} + ||\fatF||^2_{L^2(\Omega)} + \frac{\overline{\eta}^2}{\lambda^2}\,||\fatD{\fatu_h^{n+1}}||^2_{L^2(\Omega)} \vphantom{\left|\left|\frac{\fatu_h^{n+1}-\fatu_h^n}{\deltat}\right|\right|_{L^2(\Omega)}}\right. \notag \\[2mm]
        &\phantom{=\;\;\;} \qquad \left. +\; (C_S^{(1)})^4\left[||\phi_h^{n}||^2_{H^1(\Omega)} + ||\phi_h^{n+1}||^2_{H^1(\Omega)}\right]||\gradx\mu_h^{n+1}||^2_{L^2(\Omega)} + \left|\left|\frac{\fatu_h^{n+1}-\fatu_h^n}{\deltat}\right|\right|^2_{L^2(\Omega)}\right) \notag \\[2mm]
        &\leq \frac{49}{\beta^2}\left((C_S^{(1)})^4 \left[||\fatu_h^{n}||^2_{H^1(\Omega)} + \left(2 + \frac{4}{\underline{\eta}\deltat}\right)C_2\right]\left(2 + \frac{4}{\underline{\eta}\deltat}\right)C_2 + ||\fatF||^2_{L^2(\Omega)} + \frac{2\overline{\eta}^2 C_2}{\lambda\underline{\eta}\deltat} \right. \notag \\[2mm]
        &\phantom{=\;\;\;} \qquad \left. +\; (C_S^{(1)})^4\left[||\phi_h^{n}||^2_{H^1(\Omega)} + 2C_P^{(3)}\left[\left(\frac{1}{\gamma}+\deltat\right)C_2 + C^2_{\mathrm{tr}}\,||\phi_h^n||^2_{H^1(\Omega)}\right]\right]\frac{C_2}{\uM_h^n\deltat} + \frac{2C_2}{(\deltat)^2}\right). \label{est2_inf-sup}
    \end{align}
    Thus, combining \eqref{p_split} with \eqref{estimates4}, \eqref{est_p_mean}, \eqref{est1_inf-sup} and \eqref{est2_inf-sup}, we obtain
    \begin{align}
        ||p_h^{n+1}||^2_{L^2(\Omega)} &\leq \frac{3C_2}{\deltat} + \frac{98}{\beta^2}\left((C_S^{(1)})^4 \left[||\fatu_h^{n}||^2_{H^1(\Omega)} + \left(2 + \frac{4}{\underline{\eta}\deltat}\right)C_2\right]\left(2 + \frac{4}{\underline{\eta}\deltat}\right)C_2 + ||\fatF||^2_{L^2(\Omega)} + \frac{2\overline{\eta}^2 C_2}{\underline{\eta}\deltat} \right. \notag \\[2mm]
        &\phantom{=\;\;\;} \qquad\qquad \left. +\; (C_S^{(1)})^4\left[||\phi_h^{n}||^2_{H^1(\Omega)} + 2C_P^{(3)}\left[\left(\frac{1}{\gamma}+\deltat\right)C_2 + C^2_{\mathrm{tr}}\,||\phi_h^n||^2_{H^1(\Omega)}\right]\right]\frac{C_2}{\uM_h^n\deltat} + \frac{2C_2}{(\deltat)^2}\right). \label{est_p_final}
    \end{align}
    Finally, we estimate $||r_h^{n+1}||^2_{L^2(\Omega)}$. To this end, we first apply the divergence theorem to deduce that
    \begin{align*}
        \left\langle\divx(\fatu_h^{n+1}),r_h^{n+1}\right\rangle_{\Omega} = r_h^{n+1}\int_\Omega \divx(\fatu_h^{n+1})\;\dx = r_h^{n+1}\int_{\Gamma} \fatu_h^{n+1}\cdot\fatn\;\dsx = 0.
    \end{align*}
    Here, the last equality is due to the fact that $\fatu_h^{n+1}\in H^1_0(\Omega)$. Consequently, testing \eqref{lambda_discrete_p} with $q_h=r_h^{n+1}$ and using Hölder's inequality \eqref{hölder} and Young's inequality \eqref{young} as well as \eqref{estimates4}
    yields
    \begin{align}
        ||r_h^{n+1}||^2_{L^2(\Omega)} = \left(\frac{1}{\lambda} - 1\right)\left|\left\langle p_h^{n+1}, r_h^{n+1}\right\rangle_\Omega\right| \leq \frac{1}{2}\left(\frac{1}{\lambda} - 1\right)\left(||p_h^{n+1}||^2_{L^2(\Omega)} + ||r_h^{n+1}||^2_{L^2(\Omega)}\right) \leq\frac{C_2}{\deltat}\,. \label{est_r_final}
    \end{align}
\end{proof}

\begin{corollary}[a priori estimates]\label{corollary_apriori_estimates_2}
    Suppose $(\phi_h,\fatu_h)\in\Pi^1_c(\mathcal{I}_{\deltat};\mathcal{Q}_h\times\mathcal{V}_h^d)$, $(\mu_h,p_h)\in\Pi^0(\mathcal{I}_{\deltat};\mathcal{Q}_h\times\mathcal{Q}_h)$ and $r_h\in\Pi^0(\mathcal{I}_{\deltat};\mathcal{R})$ solve the numerical scheme \eqref{discrete_phi}--\eqref{discrete_mean_value} for some initial data $(\phi_h^0,\fatu_h^0)\in \mathcal{Q}_h\times\mathcal{V}_h^d$. Suppose further that
    \[\uM_h=\inf_{(t,\fatx)\,\in\,(0,T)\times\Omega}\{M(\phi_h(t,\fatx))\} > 0.\]
    Then there exist constants 
    \begin{align*}
        C_4 &= C_4(||\fatF||_{L^2(0;T;L^2(\Omega))},\gamma,s,\uM_h,\underline{\eta},\overline{\eta},c,c',T,\Omega,\Gamma,d,E_h^0,\beta,||\phi_h^0||_{H^1(\Omega)},||\fatu_h^0||_{H^1(\Omega)}), \\
        C_5 &= C_5(\deltat,||\fatF||_{L^2(0,T;L^2(\Omega))},\gamma,s,\uM_h,\underline{\eta},\overline{\eta},c,c',T,\Omega,\Gamma,d,E_h^0,\beta,||\phi_h^0||_{H^1(\Omega)},||\fatu_h^0||_{H^1(\Omega)})
    \end{align*}
    such that
    \begin{align}
        ||\phi_h||^2_{L^\infty(0,T;H^1(\Omega))} + ||\fatu_h||^2_{L^\infty(0,T;L^2(\Omega))} + ||\fatu_h||^2_{L^2(0,T;H^1(\Omega))} + ||\mu_h||^2_{L^2(0,T;H^1(\Omega))} \leq C_4, \\[2mm]
        ||p_h||^2_{L^2(0,T;L^2(\Omega))} \leq C_5.
    \end{align}
\end{corollary}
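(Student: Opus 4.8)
The plan is to exploit that any solution of the scheme \eqref{discrete_phi}--\eqref{discrete_mean_value} is precisely a solution of Problem \ref{problem} with $\lambda=1$, so that the energy inequality of Lemma \ref{lemma_discrete_energy_inequality} applies with $\lambda=1$. For $\lambda=1$ every term carrying the factor $(\tfrac1\lambda-1)$ vanishes and $\tfrac1{2\lambda}$ becomes $\tfrac12$, turning \eqref{discrete_energy_inequality} into a genuine discrete dissipation estimate. Multiplying it by $\deltat$, summing over $n\in\{0,\dots,m-1\}$ and telescoping the energy differences, I would obtain, for every $m\in\{1,\dots,N\}$, using $M(\phi_h^n)\geq\uM_h$ pointwise,
\begin{align*}
    E_h^m + \sum_{n=0}^{m-1}\Big[&\tfrac{\underline\eta}{2}\deltat\,||\fatD{\fatu_h^{n+1}}||^2_{L^2(\Omega)} + \deltat\,\uM_h\,||\gradx\mu_h^{n+1}||^2_{L^2(\Omega)} \\
    &+ \tfrac12||\fatu_h^{n+1}-\fatu_h^n||^2_{L^2(\Omega)} + \deltat\Big|\Big|\tfrac{\phi_h^{n+1}-\phi_h^n}{\deltat}\Big|\Big|^2_{L^2(\Gamma)}\Big] \leq E_h^0 + C_1T\,||\fatF||^2_{L^2(\Omega)},
\end{align*}
where the right-hand side equals $E_h^0 + C_1||\fatF||^2_{L^2(0,T;L^2(\Omega))}$ since $\fatF$ is constant in time by \hyperlink{A2}{(A2)}. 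Because \hyperlink{A5}{(A5)} gives $E_h^m\geq -c(|\Omega|+|\Gamma|)$, each summed dissipation term is bounded by the $\deltat$-independent constant $D:=E_h^0+C_1T||\fatF||^2_{L^2(\Omega)}+c(|\Omega|+|\Gamma|)$, and in addition $E_h^m\leq E_h^0+C_1T||\fatF||^2_{L^2(\Omega)}$ for every $m$.

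From the nodal energy bound I would read off the $L^\infty$-in-time estimates. Discarding the nonnegative gradient contributions in $E$ and using $f,g\geq -c$ yields $\tfrac12||\fatu_h^m||^2_{L^2(\Omega)}\leq E_h^m+c(|\Omega|+|\Gamma|)$ and $\tfrac\gamma2||\gradx\phi_h^m||^2_{L^2(\Omega)}\leq E_h^m+c(|\Omega|+|\Gamma|)$, uniformly in $m$ and $\deltat$. For the full $H^1(\Omega)$-norm of $\phi_h^m$ I would invoke mass conservation (Theorem \ref{thm_properties_scheme}(i)), which fixes $\langle\phi_h^m,1\rangle_\Omega=\langle\phi_h^0,1\rangle_\Omega$, together with the mean-value Poincaré inequality \eqref{mean_poincare}; this controls $||\phi_h^m||_{H^1(\Omega)}$ by $||\gradx\phi_h^m||_{L^2(\Omega)}$ and the data-dependent mean. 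Since $\fatu_h$ and $\phi_h$ are continuous and piecewise linear in time, $t\mapsto||\cdot||$ is convex on each subinterval and attains its maximum at a node, so these nodal bounds give the claimed $||\fatu_h||_{L^\infty(0,T;L^2(\Omega))}$ and $||\phi_h||_{L^\infty(0,T;H^1(\Omega))}$ bounds. As a by-product I would record uniform bounds on $||\phi_h^m||_{H^1(\Gamma)}$ (from $||\nabla_\Gamma\phi_h^m||_{L^2(\Gamma)}$ via the energy and $||\phi_h^m||_{L^2(\Gamma)}$ via the trace inequality \eqref{trace_inequality}) and hence, by \eqref{sobolev} and \eqref{sobolev_bdry}, on $||\phi_h^m||_{L^6(\Omega)}$ and $||\phi_h^m||_{L^q(\Gamma)}$; these are needed below.

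For the two $L^2$-in-time norms I would convert the summed dissipation into integral norms. As $\fatu_h$ is piecewise linear, $\int_{I_{n+1}}||\fatu_h||^2_{H^1(\Omega)}\dt\leq\deltat\big(||\fatu_h^n||^2_{H^1(\Omega)}+||\fatu_h^{n+1}||^2_{H^1(\Omega)}\big)$, and Korn's inequality \eqref{korn} bounds each $||\fatu_h^k||^2_{H^1(\Omega)}$ by $||\fatu_h^k||^2_{L^2(\Omega)}+2||\fatD{\fatu_h^k}||^2_{L^2(\Omega)}$; summing and combining the $L^\infty(0,T;L^2(\Omega))$ bound with $\sum_n\deltat||\fatD{\fatu_h^{n+1}}||^2_{L^2(\Omega)}\leq 2D/\underline\eta$ (the stray $n=0$ term being absorbed into the initial data since $\deltat\leq T$) gives the uniform $L^2(0,T;H^1(\Omega))$ bound on $\fatu_h$. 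For $\mu_h$, which is piecewise constant in time, $||\mu_h||^2_{L^2(0,T;H^1(\Omega))}=\sum_n\deltat||\mu_h^{n+1}||^2_{H^1(\Omega)}$; I would again apply \eqref{mean_poincare}, control the gradient part by the dissipation, and control $\sum_n\deltat|\langle\mu_h^{n+1},1\rangle_\Omega|^2$ by testing \eqref{discrete_mu} with the constant $\langle\mu_h^{n+1},1\rangle_\Omega$, so that all gradient terms drop and only the $f',g'$-terms (uniformly bounded by \hyperlink{A5}{(A5)} and the $L^6(\Omega)$-, $L^q(\Gamma)$-bounds above) and the boundary difference quotient $\sum_n\deltat||(\phi_h^{n+1}-\phi_h^n)/\deltat||^2_{L^2(\Gamma)}\leq D$ survive after Hölder's and Young's inequalities. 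Collecting the four contributions yields the first inequality with a $\deltat$-independent $C_4$.

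The pressure estimate is the genuine obstacle and the reason $C_5$ is permitted to depend on $\deltat$. Since $p_h^{n+1}$ has zero mean by \eqref{discrete_mean_value}, inf-sup stability (Theorem \ref{thm_inf-sup}) bounds $||p_h^{n+1}||_{L^2(\Omega)}$ by the supremum in \eqref{est1_inf-sup}; testing \eqref{discrete_u} and estimating exactly as in \eqref{est2_inf-sup} (now with $\lambda=1$) expresses $||p_h^{n+1}||_{L^2(\Omega)}$ through the convective term $||\fatu_h^{n+1/2}||_{H^1(\Omega)}||\fatu_h^{n+1}||_{H^1(\Omega)}$, $||\fatF||_{L^2(\Omega)}$, $||\fatD{\fatu_h^{n+1}}||_{L^2(\Omega)}$, $||\phi_h^{n+1/2}||_{H^1(\Omega)}||\gradx\mu_h^{n+1}||_{L^2(\Omega)}$ and $||(\fatu_h^{n+1}-\fatu_h^n)/\deltat||_{L^2(\Omega)}$. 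After squaring and summing with weight $\deltat$, every term except the convective one is controlled uniformly by the estimates already obtained (using $\sum_n\deltat||(\fatu_h^{n+1}-\fatu_h^n)/\deltat||^2_{L^2(\Omega)}\leq 2D$ and the uniform bound on $||\phi_h^{n+1/2}||_{H^1(\Omega)}$). For the convective contribution $\sum_n\deltat\,||\fatu_h^{n+1/2}||^2_{H^1(\Omega)}||\fatu_h^{n+1}||^2_{H^1(\Omega)}$ there is no $\deltat$-uniform handle: I would bound one factor by the nodal maximum $\max_k||\fatu_h^k||^2_{H^1(\Omega)}$, which by \eqref{est_u_final} (together with the uniform bound on $C_2$ coming from $E_h^n\leq E_h^0+C_1T||\fatF||^2_{L^2(\Omega)}$) is of size $O(1/\deltat)$, and the other by the $L^2(0,T;H^1(\Omega))$-bound on $\fatu_h$ just established. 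This produces a factor $1/\deltat$ and hence a bound $||p_h||^2_{L^2(0,T;L^2(\Omega))}\leq C_5(\deltat,\dots)$ that degenerates as $\deltat\to0$, which is exactly the dependence asserted for $C_5$.
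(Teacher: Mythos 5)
Your proposal reproduces the paper's own proof essentially step for step: telescoping the energy inequality of Lemma \ref{lemma_discrete_energy_inequality} at $\lambda=1$, extracting the $L^\infty$-in-time bounds from the energy together with mass conservation and the mean-value Poincar\'e inequality \eqref{mean_poincare} (plus the trace inequality \eqref{trace_inequality} for the boundary norms), converting the summed dissipation into the $L^2$-in-time bounds via Korn's and Poincar\'e's inequalities, controlling the mean of $\mu_h^{n+1}$ by testing the chemical-potential equation with the constant $\langle\mu_h^{n+1},1\rangle_\Omega$, and closing the pressure bound through the inf-sup condition \eqref{inf-sup} with a $\deltat$-dependent constant. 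The structure and the key lemmas are identical to the paper's.

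There is, however, one incorrect intermediate claim in your pressure paragraph, though it is harmless for the conclusion. The telescoped energy inequality gives $\sum_{n}\tfrac12\,||\fatu_h^{n+1}-\fatu_h^n||^2_{L^2(\Omega)}\le D$, i.e. $\sum_n(\deltat)^2\,||(\fatu_h^{n+1}-\fatu_h^n)/\deltat||^2_{L^2(\Omega)}\le 2D$, which is exactly what \eqref{L2L2_base} records; hence
\[\sum_n \deltat\,\left|\left|\frac{\fatu_h^{n+1}-\fatu_h^n}{\deltat}\right|\right|^2_{L^2(\Omega)} \le \frac{2D}{\deltat},\]
not $\le 2D$ as you assert. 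Consequently your statement that ``every term except the convective one is controlled uniformly'' is inaccurate: the time-difference quotient also carries a factor $1/\deltat$, and in the paper's final pressure estimate this term indeed appears with the prefactor $49/(\beta^2\deltat)$, alongside the convective term, as a second source of the $\deltat$-dependence of $C_5$. Since your $C_5$ is already permitted to depend on $\deltat$, replacing $2D$ by $2D/\deltat$ repairs the step without altering the result. The remainder of the argument is sound, including your handling of the convective term, where you obtain the $O(1/\deltat)$ bound on $\max_k||\fatu_h^k||^2_{H^1(\Omega)}$ from \eqref{est_u_final} together with the uniform bound $E_h^n\le E_h^0+C_1 T||\fatF||^2_{L^2(\Omega)}$; the paper derives the same $1/\deltat$ factor slightly differently, by estimating the nodal maximum through $(\deltat)^{-1}$ times the summed $L^2$-in-time norm, but the two routes are equivalent in effect.
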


\begin{proof}
    Let $n\in\{0,\dots,N-1\}$ be arbitrary. It follows from \eqref{discrete_energy_inequality} with $\lambda=1$ that
    \begin{align}
        E_h^{n+1} = E_h^0 + \deltat\sum_{k\,=\,0}^{n}\frac{E_h^{k+1}-E_h^k}{\deltat} \leq E_h^0 + C_1\sum_{k\,=\,0}^{n}\deltat\,||\fatF||^2_{L^2(\Omega)} \leq E_h^0 + C_1||\fatF||^2_{L^2(0,T;L^2(\Omega))}\,. \label{est_energy}
    \end{align}
    Consequently,
    \begin{align}
        &||\gradx\phi_h||^2_{L^\infty(0,T;L^2(\Omega))} + ||\nabla_{\Gamma}\phi_h||^2_{L^\infty(0,T;L^2(\Gamma))} + ||\fatu_h||^2_{L^\infty(0,T;L^2(\Omega))} \notag \\[2mm]
        &= \max_{0\,\leq\,k\,\leq\,N}\left\{||\gradx\phi_h^k||^2_{L^2(\Omega)}\right\} + \max_{0\,\leq\,k\,\leq\,N}\left\{||\nabla_{\Gamma}\phi_h^k||^2_{L^2(\Gamma)}\right\} + \max_{0\,\leq\,k\,\leq\,N}\left\{||\fatu_h^k||^2_{L^2(\Omega)}\right\} \notag \\[2mm]
        &\leq \left(2+\frac{2}{s}+\frac{2}{\gamma}\right)\max_{0\,\leq\,k\,\leq\,N}\left\{E_h^k + c(|\Omega|+|\Gamma|)\right\} \leq \left(2+\frac{2}{s}+\frac{2}{\gamma}\right)\left[E_h^0 + c(|\Omega|+|\Gamma|) + C_1||\fatF||^2_{L^2(0,T;L^2(\Omega))}\right].\label{est_LinfL2} 
    \end{align}
    Moreover, using $\psi_h=1$ as a test function in \eqref{discrete_phi}, we deduce that $\langle\phi_h,1\rangle_\Omega = \langle\phi_h^0,1\rangle_\Omega$. Consequently, Poincaré's inequality \eqref{mean_poincare}, \eqref{est_LinfL2} and Hölder's inequality \eqref{hölder} yield
    \begin{align}
        ||\phi_h||^2_{L^\infty(0,T;H^1(\Omega))} &\leq C_P^{(1)}\left(||\gradx\phi_h||^2_{L^\infty(0,T;L^2(\Omega))} + \big|\langle\phi_h^0,1\rangle_\Omega\big|^2\right) \notag \\[2mm]
        &\leq C_P^{(1)}\left(\left(2+\frac{2}{s}+\frac{2}{\gamma}\right)\left[E_h^0 + c(|\Omega|+|\Gamma|) + C_1||\fatF||^2_{L^2(0,T;L^2(\Omega))}\right] + |\Omega|\,||\phi_h^0||^2_{L^2(\Omega)}\right). \label{est_phi_LinfH1}
    \end{align}
    Next, we combine the trace inequality \eqref{trace_inequality} with \eqref{est_phi_LinfH1} to see that
    \begin{align}
        ||\phi_h||^2_{L^\infty(0,T;L^2(\Gamma))} &\leq C_{\mathrm{tr}}^2\,||\phi_h||^2_{L^\infty(0,T;H^1(\Omega))} \notag \\[2mm]
        &\leq C_{\mathrm{tr}}^2 C_P^{(1)}\left(\left(2+\frac{2}{s}+\frac{2}{\gamma}\right)\left[E_h^0 + c(|\Omega|+|\Gamma|) + C_1||\fatF||^2_{L^2(0,T;L^2(\Omega))}\right] + |\Omega|\,||\phi_h^0||^2_{L^2(\Omega)}\right). \label{est_phi_LinfL2_gamma}
    \end{align}
    Together, \eqref{est_LinfL2} and \eqref{est_phi_LinfL2_gamma} yield
    \begin{align}
        &||\phi_h||^2_{L^\infty(0,T;H^1(\Gamma))} = ||\phi_h||^2_{L^\infty(0,T;L^2(\Gamma))} + ||\nabla_{\Gamma}\phi_h||^2_{L^\infty(0,T;L^2(\Gamma))} \notag \\[2mm]
        &\qquad \leq \left(1+C_{\mathrm{tr}}^2 C_P^{(1)}\right)\left(2+\frac{2}{s}+\frac{2}{\gamma}\right)\left[E_h^0 + c(|\Omega|+|\Gamma|) + C_1||\fatF||^2_{L^2(0,T;L^2(\Omega))}\right] + C_{\mathrm{tr}}^2 C_P^{(1)}|\Omega|\,||\phi_h^0||^2_{L^2(\Omega)}.
    \end{align}
    From \eqref{discrete_energy_inequality} we deduce that
    \begin{align}
        &\sum_{k\,=\,1}^N \deltat\,||\fatD{\fatu_h^k}||^2_{L^2(\Omega)} + \sum_{k\,=\,1}^N (\deltat)^2\left|\left|\frac{\fatu_h^k-\fatu_h^{k-1}}{\deltat}\right|\right|^2_{L^2(\Omega)} + \sum_{k\,=\,1}^N \deltat\left|\left|\frac{\phi_h^k-\phi_h^{k-1}}{\deltat}\right|\right|^2_{L^2(\Gamma)} + ||\gradx\mu_h||^2_{L^2(0,T;L^2(\Omega))} \notag \\[2mm]
        &\quad \leq \left(2+\frac{2}{\underline{\eta}}+\frac{1}{\uM_h}\right)\left(C_1\sum_{k\,=\,1}^N \deltat\,||\fatF||^2_{L^2(\Omega)} - \sum_{k\,=\,1}^N (E_h^k-E_h^{k-1})\right) \notag \\[2mm]
        &\quad \leq \left(2+\frac{2}{\underline{\eta}}+\frac{1}{\uM_h}\right)\left(C_1||\fatF||^2_{L^2(0,T;L^2(\Omega))} + E_h^0 + c(|\Omega|+|\Gamma|)\right). \label{L2L2_base}
    \end{align}
    Using Korn's inequality \eqref{korn} and \eqref{L2L2_base} we observe that 
    \begin{align}
        &||\gradx\fatu_h||^2_{L^2(0,T;L^2(\Omega))} = \int_0^{\,T} \left|\left|\,\sum_{k\,=\,1}^N\left[\frac{t_k-t}{\deltat}\,\gradx\fatu_h^{k-1} + \frac{t-t_{k-1}}{\deltat}\,\gradx\fatu_h^k\right]\mathds{1}_{I_k}(t)\right|\right|^2_{L^2(\Omega)}\;\dt \notag \\[2mm]
        &= \sum_{k\,=\,1}^N \int_{t_{k-1}}^{\,t_k} \left|\left|\frac{t_k-t}{\deltat}\,\gradx\fatu_h^{k-1} + \frac{t-t_{k-1}}{\deltat}\,\gradx\fatu_h^k\right|\right|^2_{L^2(\Omega)}\;\dt \leq \sum_{k\,=\,1}^N \int_{t_{k-1}}^{\,t_k} \big(||\gradx\fatu_h^{k-1}||_{L^2(\Omega)} + ||\gradx\fatu_h^k||_{L^2(\Omega)}\big)^2\;\dt \notag \\[2mm]
        &\leq 2\sum_{k\,=\,1}^N \deltat \big(||\gradx\fatu_h^{k-1}||^2_{L^2(\Omega)} + ||\gradx\fatu_h^k||^2_{L^2(\Omega)}\big) \leq 2\deltat\,||\gradx\fatu_h^0||^2_{L^2(\Omega)} + 4\sum_{k\,=\,1}^N \deltat\,||\gradx\fatu_h^k||^2_{L^2(\Omega)} \notag \\[2mm]
        &\leq 2\deltat\,||\fatu_h^0||^2_{H^1(\Omega)} + 8\sum_{k\,=\,1}^N \deltat\,||\fatD{\fatu_h^k}||^2_{L^2(\Omega)} \notag \\[2mm]
        &\leq 2\deltat\,||\fatu_h^0||^2_{H^1(\Omega)} + 8\left(2+\frac{2}{\underline{\eta}}+\frac{1}{\uM_h}\right)\left(C_1||\fatF||^2_{L^2(0,T;L^2(\Omega))} + E_h^0 + c(|\Omega|+|\Gamma|)\right).
    \end{align}
    Combining this inequality with \eqref{est_LinfL2} and Poincaré's inequality \eqref{poincare}, we obtain
    \begin{align}
        &||\fatu_h||^2_{L^2(0,T;H^1(\Omega))} \leq (C_P^{(2)})^2||\gradx\fatu_h||^2_{L^2(0,T;L^2(\Omega))} \notag \\[2mm]
        &\quad \leq 2T\,||\fatu_h^0||^2_{H^1(\Omega)} + 8\left(2+\frac{2}{\underline{\eta}}+\frac{1}{\uM_h}\right)\left(C_1||\fatF||^2_{L^2(0,T;L^2(\Omega))} + E_h^0 + c(|\Omega|+|\Gamma|)\right).
    \end{align}
    The computations in Corollary \ref{corollary_apriori_estimates_1} and \eqref{L2L2_base} yield
    \begin{align}
        &||\langle\mu_h,1\rangle_\Omega||^2_{L^2(0,T)} = \sum_{k\,=\,1}^N \deltat\left|\left\langle\mu_h^{k},1\right\rangle_\Omega\right|^2 \notag \\[2mm]
        &\leq \sum_{k\,=\,1}^N \deltat\left[5|\Gamma|\left|\left|\frac{\phi_h^{k}-\phi_h^{k-1}}{\deltat}\right|\right|^2_{L^2(\Gamma)} \!+ 20(c')^2|\Omega|\left(|\Omega| + \left[C^{(1)}_S||\phi_h^{k}||_{H^1(\Omega)}\right]^6\right) + 20(c')^2|\Gamma|\left(|\Gamma| + \left[C^{(2)}_S||\phi_h^{k}||_{H^1(\Gamma)}\right]^{q}\right)\right] \notag \\[2mm]
        &\leq 5|\Gamma|\left(2+\frac{2}{\underline{\eta}}+\frac{1}{\uM_h}\right)\left(C_1||\fatF||^2_{L^2(0,T;L^2(\Omega))} + E_h^0 + c(|\Omega|+|\Gamma|)\right) + 20T(c')^2|\Omega|\left(|\Omega| + \left[C^{(1)}_S||\phi_h||_{L^\infty(0,T;H^1(\Omega))}\right]^6\right) \notag \\[2mm]
        &\phantom{=\;} + 20T(c')^2|\Gamma|\left(|\Gamma| + \left[C^{(2)}_S||\phi_h||_{L^\infty(0,T;H^1(\Gamma))}\right]^{q}\right).
    \end{align}
    Combining this inequality with \eqref{L2L2_base} and Poincaré's inequality \eqref{mean_poincare}, we obtain
    \begin{align}
        &||\mu_h||^2_{L^2(0,T;H^1(\Omega))} \leq C_P^{(1)}\left(||\gradx\mu_h||^2_{L^2(0,T;L^2(\Omega))} + ||\langle\mu_h,1\rangle_\Omega||^2_{L^2(0,T)}\right) \notag \\[2mm]
        &\leq C_P^{(1)}(1+5|\Gamma|)\left(2+\frac{2}{\underline{\eta}}+\frac{1}{\uM_h}\right)\left(C_1||\fatF||^2_{L^2(0,T;L^2(\Omega))} + E_h^0 + c(|\Omega|+|\Gamma|)\right) \notag \\[2mm]
        &\phantom{=\;} + C_P^{(1)}\left[20T(c')^2|\Omega|\left(|\Omega| + \left[C^{(1)}_S||\phi_h||_{L^\infty(0,T;H^1(\Omega))}\right]^6\right) + 20T(c')^2|\Gamma|\left(|\Gamma| + \left[C^{(2)}_S||\phi_h||_{L^\infty(0,T;H^1(\Gamma))}\right]^{q}\right)\right].
    \end{align}
    Finally, due to \eqref{discrete_mean_value}, the pressure $p_h$ has zero mean at each time instant $t\in[0,T]$. Repeating the arguments leading to \eqref{est2_inf-sup}, applying Poincaré's inequality \eqref{poincare} and Korn's inequality \eqref{korn} and using \eqref{L2L2_base}, we deduce that
    \begin{align}
        &||p_h||^2_{L^2(0,T;L^2(\Omega))} = \sum_{k\,=\,1}^N \deltat\,||p_h^k||^2_{L^2(\Omega)} \leq \frac{1}{\beta^2}\sum_{k\,=\,1}^N \deltat\sup_{\fatw_h\,\in\,\mathcal{V}_h^d\backslash\{\bm{0}\}}\left\{\left|\int_\Omega\frac{\divx(\fatw_h)\,p_h^{k}}{||\fatw_h||_{H^1(\Omega)}}\;\dx \right|^2\right\} \notag \\[2mm]
        &\leq \frac{49}{\beta^2}\sum_{k\,=\,1}^N \deltat\left((C_S^{(1)})^4 \left[||\fatu_h^{k-1}||^2_{H^1(\Omega)} + ||\fatu_h^{k}||^2_{H^1(\Omega)}\right]||\fatu_h^{k}||^2_{H^1(\Omega)} + ||\fatF||^2_{L^2(\Omega)} + \overline{\eta}^2\,||\fatD{\fatu_h^{k}}||^2_{L^2(\Omega)} \vphantom{\left|\left|\frac{\fatu_h^{k}-\fatu_h^{k-1}}{\deltat}\right|\right|_{L^2(\Omega)}}\right. \notag \\[2mm]
        &\qquad\qquad\qquad\quad \left. +\; (C_S^{(1)})^4\left[||\phi_h^{k-1}||^2_{H^1(\Omega)} + ||\phi_h^{k}||^2_{H^1(\Omega)}\right]||\gradx\mu_h^{k}||^2_{L^2(\Omega)} + \left|\left|\frac{\fatu_h^{k}-\fatu_h^{k-1}}{\deltat}\right|\right|^2_{L^2(\Omega)}\right) \notag \\[2mm]
        &\leq \frac{49}{\beta^2\deltat}\left[(C_S^{(1)})^4\left(\deltat\,||\fatu_h^{0}||^2_{H^1(\Omega)} + 2\sum_{k\,=\,1}^N \deltat\,||\fatu_h^{k}||^2_{H^1(\Omega)}\right)\sum_{k\,=\,1}^N \deltat\,||\fatu_h^{k}||^2_{H^1(\Omega)} + \sum_{k\,=\,1}^N (\deltat)^2\left|\left|\frac{\fatu_h^{k}-\fatu_h^{k-1}}{\deltat}\right|\right|^2_{L^2(\Omega)}\right]  \notag \\[2mm]
        &\phantom{=\;} + \frac{49}{\beta^2}\left(||\fatF||^2_{L^2(0,T;L^2(\Omega))} + \overline{\eta}^2 \sum_{k\,=\,1}^N \deltat\,||\fatD{\fatu_h^k}||^2_{L^2(\Omega)} + 2(C_S^{(1)})^4||\phi_h||^2_{L^\infty(0,T;H^1(\Omega))}||\gradx\mu_h||^2_{L^2(0,T;L^2(\Omega))}\right) \notag \\[2mm]
        &\leq \frac{49}{\beta^2\deltat}\left[2(C_S^{(1)})^4(C_P^{(2)})^2\left(\deltat\,||\fatu_h^{0}||^2_{H^1(\Omega)} + 4(C_P^{(2)})^2\sum_{k\,=\,1}^N \deltat\,||\fatD{\fatu_h^{k}}||^2_{L^2(\Omega)}\right)\sum_{k\,=\,1}^N \deltat\,||\fatD{\fatu_h^{k}}||^2_{L^2(\Omega)}\right]  \notag \\[2mm]
        &\phantom{=\;} + \frac{49}{\beta^2}\left(\overline{\eta}^2 \sum_{k\,=\,1}^N \deltat\,||\fatD{\fatu_h^k}||^2_{L^2(\Omega)} + \frac{1}{\deltat}\sum_{k\,=\,1}^N (\deltat)^2\left|\left|\frac{\fatu_h^{k}-\fatu_h^{k-1}}{\deltat}\right|\right|^2_{L^2(\Omega)}\right) \notag \\[2mm]
        &\phantom{=\;} + \frac{49}{\beta^2}\left(||\fatF||^2_{L^2(0,T;L^2(\Omega))} + 2(C_S^{(1)})^4||\phi_h||^2_{L^\infty(0,T;H^1(\Omega))}||\mu_h||^2_{L^2(0,T;H^1(\Omega))}\right) \notag \\[2mm]
        &\leq \frac{49}{\beta^2}\left(2(C_S^{(1)})^4(C_P^{(2)})^2||\fatu_h^0||^2_{H^1(\Omega)} + \overline{\eta}^2 + \frac{1}{\deltat}\right)\left(2+\frac{2}{\underline{\eta}}+\frac{1}{\uM_h}\right)\left(C_1||\fatF||^2_{L^2(0,T;L^2(\Omega))} + E_h^0 + c(|\Omega|+|\Gamma|)\right) \notag \\[2mm]
        &\phantom{=\;} + \frac{392}{\beta^2\deltat}\,(C_S^{(1)})^4(C_P^{(2)})^4\left[\left(2+\frac{2}{\underline{\eta}}+\frac{1}{\uM_h}\right)\left(C_1||\fatF||^2_{L^2(0,T;L^2(\Omega))} + E_h^0 + c(|\Omega|+|\Gamma|)\right)\right]^2 \notag \\[2mm]
        &\phantom{=\;} + \frac{49}{\beta^2}\left(||\fatF||^2_{L^2(0,T;L^2(\Omega))} + 2(C_S^{(1)})^4||\phi_h||^2_{L^\infty(0,T;H^1(\Omega))}||\mu_h||^2_{L^2(0,T;H^1(\Omega))}\right).
    \end{align}
\end{proof}

\begin{remark}\label{remark_apriori_estimates}
    Let $\phi_0\in H^2(\Omega)$ and $\fatu_0\in H^1_0(\Omega)^d\cap H^2(\Omega)^d$. If we choose $\phi_h^0$ to be the unique function in $\mathcal{Q}_h$ that coincides with $\phi_0$ in all vertices of the mesh and $\fatu_h^0$ as the unique function in $\mathcal{V}_h^d$ that coincides with $\fatu_0$ in all vertices and in all midpoints of edges of the mesh, then there exists a constant $C>0$ independent of $h$ such that 
    \[E_h^0\leq C E(\phi_0,\fatu_0), \qquad ||\phi_h^0||_{H^1(\Omega)} \leq C ||\phi_0||_{H^2(\Omega)} \qquad \text{and} \qquad ||\fatu_h^0||_{H^1(\Omega)} \leq C ||\fatu_0||_{H^2(\Omega)}\]    
    for all $h\in(0,H]$. Moreover, if the mobility function $M$ is bounded from below by some constant $\uM>0$, then $\uM_h \geq \uM$ for all $h\in(0,H]$. In particular, if all of the above assumptions are satisfied, then the constants $C_4,C_5$ in Corollary \ref{corollary_apriori_estimates_2} can be chosen independently of $h$.
\end{remark}

\subsubsection{Proof of Theorem \texorpdfstring{\ref{thm_properties_scheme}}{2.1}}\label{sec_proof}
We are now ready to prove Theorem \ref{thm_properties_scheme}. Testing \eqref{discrete_phi} with $\psi_h=1$ verifies assertion (\hyperlink{thm_i}{i}). The correctness of assertion (\hyperlink{thm_ii}{ii}) is an immediate consequence of Lemma \ref{lemma_discrete_energy_inequality} with $\lambda=1$. To prove assertion (\hyperlink{thm_iii}{iii}), we apply Schaefer's fixed point theorem (cf. Theorem \ref{schaefer}). To this end, we equip $X_h$ with the scalar product $\langle\cdot,\cdot\rangle_{X_h}:X_h\times X_h\to\reals$ defined via
\begin{align*}
    \big\langle(\mu_h,\Delta\phi_h,\fatu_h,p_h,r_h),(\psi_h,w_h,\fatv_h,q_h,s_h)\big\rangle_{X_h} &= \left\langle\mu_h,\psi_h\right\rangle_\Omega + \big\langle M(\phi_h^n)\gradx\mu_h,\gradx\psi_h\big\rangle_\Omega + \left\langle\Delta\phi_h,w_h\right\rangle_\Omega \\[2mm]
    &\phantom{=\;\;} + \big\langle \eta(\gammadot_h^n,\phi_h^n)\fatD{\fatu_h},\fatD{\fatv_h}\big\rangle_\Omega + \left\langle p_h, q_h\right\rangle_\Omega + \left\langle r_h, s_h\right\rangle_\Omega
\end{align*}
for all $(\mu_h,\Delta\phi_h,\fatu_h,p_h,r_h),(\psi_h,w_h,\fatv_h,q_h,s_h)\in X_h$. Next, let the map $\mathcal{S}_h:X_h\to X_h$ be such that 
\begin{align}
    &\left\langle\mathcal{S}_h(\mu_h,\Delta\phi_h,\fatu_h,p_h,r_h),(\psi_h,w_h,\fatv_h,q_h,s_h)\right\rangle_{X_h} = \sum_{k\,=\,1}^{23}\left\langle\mathcal{S}_h^{(k)}(\mu_h,\Delta\phi_h,\fatu_h,p_h,r_h),(\psi_h,w_h,\fatv_h,q_h,s_h)\right\rangle_{X_h} \notag \\[2mm]
    &\quad = \left\langle\mu_h,\psi_h\right\rangle_\Omega + \left\langle\left(\phi_h^n + \frac{\deltat}{2}\,\Delta\phi_h\right)\fatu_h,\gradx\psi_h\right\rangle_\Omega - \left\langle\Delta\phi_h,\psi_h\right\rangle_\Omega + \left\langle\mu_h,w_h\right\rangle_\Omega - \gamma\big\langle\gradx\!\left(\phi_h^n+\deltat\Delta\phi_h\right),\gradx w_h\big\rangle_\Omega \notag \\[2mm]
    &\quad \phantom{=\;\;} - \big\langle f'_{\mathrm{vex}}\!\left(\phi_h^n+\deltat\Delta\phi_h\right), w_h\big\rangle_\Omega - \big\langle f'_{\mathrm{cav}}(\phi_h^n), w_h\big\rangle_\Omega - \big\langle g'_{\mathrm{vex}}\!\left(\phi_h^n+\deltat\Delta\phi_h\right), w_h\big\rangle_{\Gamma} - \big\langle g'_{\mathrm{cav}}(\phi_h^n), w_h\big\rangle_{\Gamma} \notag \\[2mm]
    &\quad \phantom{=\;\;} - s\big\langle\nabla_{\Gamma}\!\left(\phi_h^n+\deltat\Delta\phi_h\right),\nabla_{\Gamma} w_h\big\rangle_{\Gamma} + \left\langle\Delta\phi_h, w_h\right\rangle_\Omega - \left\langle\Delta\phi_h, w_h\right\rangle_{\Gamma} + \frac{1}{4}\,\big\langle[(\fatu_h + \fatu_h^n)\cdot\gradx]\fatv_h, \fatu_h\big\rangle_\Omega \notag \\[2mm]
    &\quad \phantom{=\;\;} - \frac{1}{4}\,\big\langle[(\fatu_h + \fatu_h^n)\cdot\gradx]\fatu_h, \fatv_h\big\rangle_\Omega + \left\langle\fatF, \fatv_h\right\rangle_\Omega + \left\langle p_h, \divx(\fatv_h)\right\rangle_\Omega - \left\langle\left(\phi_h^n + \frac{\deltat}{2}\Delta\phi_h\right)\gradx\mu_h, \fatv_h\right\rangle_\Omega \notag \\[2mm]
    &\quad \phantom{=\;\;} - \left\langle\frac{\fatu_h-\fatu_h^n}{\deltat}, \fatv_h\right\rangle_\Omega + \left\langle p_h, q_h\right\rangle_\Omega - \left\langle\divx(\fatu_h), q_h\right\rangle_\Omega - \left\langle r_h, q_h\right\rangle_\Omega + \left\langle r_h,s_h\right\rangle_\Omega + \left\langle p_h, s_h\right\rangle_\Omega \label{reformulated_problem}
\end{align}
for all $(\mu_h,\Delta\phi_h,\fatu_h,p_h,r_h),(\psi_h,w_h,\fatv_h,q_h,s_h)\in X_h$. The existence and the uniqueness of $\mathcal{S}_h$ are guaranteed by the Riesz representation theorem (cf. Theorem \ref{riesz}). Moreover, since in finite-dimensional vector spaces all norms are equivalent (cf. Definition and Theorem \ref{thm_equivalence_of_norms}), it is easy to prove that $\mathcal{S}_h$ is continuous. For a detailed examination of $\mathcal{S}_h$ we refer to Appendix \ref{sec_sh}.

From the discussion above it follows that the equations \eqref{lambda_discrete_mu}--\eqref{lambda_discrete_r} defining Problem \ref{problem} can equivalently be written as
\begin{align}
    \left\langle\fatx_h^{n+1},\faty_h\right\rangle_{X_h} = \left\langle\lambda\mathcal{S}_h(\fatx_h^{n+1}),\faty_h\right\rangle_{X_h} \qquad \text{for all $\faty_h\in X_h$.} \label{fixed_point_problem}
\end{align}
More precisely, if $(\mu_h^{n+1},\phi_h^{n+1},\fatu_h^{n+1},p_h^{n+1},r_h^{n+1})\in X_h$ solves \eqref{lambda_discrete_mu}--\eqref{lambda_discrete_r}, then 
\begin{align*}
    \fatx_h^{n+1} = \left(\mu_h^{n+1},\frac{\phi_h^{n+1}-\phi_h^n}{\deltat},\fatu_h^{n+1},p_h^{n+1},r_h^{n+1}\right)
\end{align*}
solves \eqref{fixed_point_problem}. Conversely, if $\fatx_h^{n+1}=(\mu_h^{n+1},\Delta\phi_h^{n+1},\fatu_h^{n+1},p_h^{n+1},r_h^{n+1})\in X_h$ solves \eqref{fixed_point_problem}, then 
\begin{align*}
    (\mu_h^{n+1},\phi_h^{n}+\deltat\Delta\phi_h^{n+1},\fatu_h^{n+1},p_h^{n+1},r_h^{n+1}) 
\end{align*}
solves \eqref{lambda_discrete_mu}--\eqref{lambda_discrete_r}. Therefore, we may apply Corollary \ref{corollary_apriori_estimates_1} to deduce that the set
\begin{align*}
    \big\{\fatx\in X_h\,\big|\,\fatx=\lambda\mathcal{S}_h(\fatx)\;\text{for some $\lambda\in[0,1]$}\big\}
\end{align*}
is bounded. Consequently, Schaefer's fixed point theorem yields the existence of a solution to Problem \ref{problem} with $\lambda=1$ and thus to the numerical scheme. \hfill $\Box$

\subsection{Numerical simulations}
For the numerical simulations we restrict ourselves to the two-dimensional case ($d=2$). For the solution of the non-linear systems arising in each time step of the numerical scheme \eqref{discrete_phi}--\eqref{discrete_mean_value} we employ Newton's method which we abort as soon as the current Newton iterates $\phi_h^{k+1,*}, \mu_h^{k+1,*}, \fatu_h^{k+1,*}, p_h^{k+1,*}, r_h^{k+1,*}$ satisfy
\begin{align*}
    & ||(\phi_h^{k+1,*}-\phi_h^{k}, \mu_h^{k+1,*}-\mu_h^{k}, \fatu_h^{k+1,*}-\fatu_h^{k})||_{L^2(\Omega)} < 10^{-10} \\[4mm]
    \text{or} \qquad & \frac{||(\phi_h^{k+1,*}-\phi_h^{k}, \mu_h^{k+1,*}-\mu_h^{k}, \fatu_h^{k+1,*}-\fatu_h^{k})||_{L^2(\Omega)}}{||(\phi_h^{k+1,*}, \mu_h^{k+1,*}, \fatu_h^{k+1,*})||_{L^2(\Omega)}} < 10^{-9}.
\end{align*}
The linear systems arising in each Newton step are solved with a sparse-direct solver. For the implementation, we rely on the open source finite element software FEniCS \citep{fenics}.

\subsubsection{Channel flow} We consider the periodic channel with $L_1=1$ and $L_2=3$. The final time is $T=1000$ and the time step is $\deltat=0.01$. The mesh is given by $\mathcal{T}_h = \mathcal{T}^1_h \cup \mathcal{T}^2_h$, where
\begin{align*}
    \mathcal{T}^1_h &= \left\{\mathrm{conv}\!\left\{\left.\left(\frac{\ell}{180},\frac{m}{60}\right), \left(\frac{\ell}{180},\frac{m+1}{60}\right), \left(\frac{\ell+1}{180},\frac{m+1}{60}\right)\right\}\right| (\ell,m)\in\{0,\dots,179\}\times\{0,\dots,59\}\right\}, \\[2mm]
    \mathcal{T}^2_h &= \left\{\mathrm{conv}\!\left\{\left.\left(\frac{\ell}{180},\frac{m}{60}\right), \left(\frac{\ell+1}{180},\frac{m}{60}\right), \left(\frac{\ell+1}{180},\frac{m+1}{60}\right)\right\}\right| (\ell,m)\in\{0,\dots,179\}\times\{0,\dots,59\}\right\}.
\end{align*}
In particular, $h=\sqrt{2}/60$. Moreover, we choose 
\begin{align}
    \gamma=0.001, \qquad s=0.1, \qquad \fatF=(0.01,0)^T, \qquad M(\phi)=\frac{1}{16}\,\phi^2(1-\phi)^2. \label{parameters_and_functions}
\end{align}
The mixture viscosity $\eta$ is determined by a suitable interpolation of the MD data -- more specifically,\footnote{The prefactor $1/3375$ accounts for the different box sizes in the microscopic and macroscopic settings.}
\begin{align}
    \eta(\gammadot,\phi) &= \frac{1}{3375}\left\{
    \begin{array}{rl}
        \eta_0(\gammadot) & \text{if} \quad \phi < 0, \\[2mm]
        5(\eta_1(\gammadot) - \eta_0(\gammadot))\phi + \eta_0(\gammadot) & \text{if} \quad 0 \leq \phi < 0.2, \\[2mm]
        5(\eta_2(\gammadot) - \eta_1(\gammadot))(\phi-0.2) + \eta_1(\gammadot) & \text{if} \quad 0.2 \leq \phi < 0.4, \\[2mm]
        10(\eta_3(\gammadot) - \eta_2(\gammadot))(\phi-0.4) + \eta_2(\gammadot) & \text{if} \quad 0.4 \leq \phi < 0.5, \\[2mm]
        10(\eta_4(\gammadot) - \eta_3(\gammadot))(\phi-0.5) + \eta_3(\gammadot) & \text{if} \quad 0.5 \leq \phi < 0.6, \\[2mm]
        5(\eta_5(\gammadot) - \eta_4(\gammadot))(\phi-0.6) + \eta_4(\gammadot) & \text{if} \quad 0.6 \leq \phi < 0.8, \\[2mm]
        5(\eta_6(\gammadot) - \eta_5(\gammadot))(\phi-0.8) + \eta_5(\gammadot) & \text{if} \quad 0.8 \leq \phi < 1, \\[2mm]
        \eta_6(\gammadot) & \text{if} \quad \phi \geq 1.
    \end{array}\right. \label{viscosity}
\end{align}
Here, the functions $\eta_i$, $i\in\{0,\dots,6\}$, are obtained by a least-squares fitting of the MD data shown in Figure \ref{fig1}a to the Carreau-Yasuda model
\begin{align}
    \eta_i(\gammadot) = \eta_{i,\infty} + (\eta_{i,\infty}-\eta_{i,0})[1+(a_{i,2}\gammadot)^{a_{i,3}}]^{a_{i,1}}, \qquad i\in\{0,\dots,6\}. \label{carreau-yasuda}
\end{align}
The precise values of the parameters $\{\eta_{i,0}, \eta_{i,\infty}, a_{i,1}, a_{i,2}, a_{i,3}\}_{i\,\in\,\{0,\dots,6\}}$ can be found in Appendix \ref{sec_fitting}. The potential $f$ is chosen as 
\begin{align}
    f(\phi) &= \left\{\begin{array}{rl}
        f_{\mathrm{FH}}(\alpha) + f_{\mathrm{FH}}'(\alpha)(\phi-\alpha) + \frac{1}{2}f_{\mathrm{FH}}''(\alpha)(\phi-\alpha)^2 & \text{if $\phi<\alpha$,} \\[2mm]
        f_{\mathrm{FH}}(\phi) & \text{if $\alpha\leq\phi\leq 1-\alpha$,} \\[2mm]
        f_{\mathrm{FH}}(1-\alpha) + f_{\mathrm{FH}}'(1-\alpha)(\phi-(1-\alpha)) + \frac{1}{2}f_{\mathrm{FH}}''(1-\alpha)(\phi-(1-\alpha))^2 & \text{if $\phi>1-\alpha$,}
    \end{array}
    \right. \label{f_potential}
\end{align}
where 
\[f_{\mathrm{FH}}(\phi) = \frac{1}{N_{p_1}}\,\phi\ln(\phi) + \frac{1}{N_{p_2}}\,(1-\phi)\ln(1-\phi) + \chi\phi(1-\phi)\]
is the classical Flory-Huggins potential with the chain lengths $N_{p_1}=N_{p_2}=N=15$. Depending on the value of the Flory-Huggins interaction parameter $\chi$, $f_{\mathrm{FH}}$ has either one minimum (if $\chi\leq \chi_\mathrm{crit}=2/15$) or two minima (if $\chi > \chi_\mathrm{crit}=2/15$). We set
\[(\phi_\star, \phi^\star) = (\min\underset{{\phi\,\in\,[0,1]}}{\mathrm{argmin}}(f_{\mathrm{FH}}(\phi)), \max\underset{{\phi\,\in\,[0,1]}}{\mathrm{argmin}}(f_{\mathrm{FH}}(\phi))).\]
Since $f_{\mathrm{FH}}$ is symmetric with respect to the axis $\phi=0.5$, $\phi^\star = 1-\phi_\star$. The cut-off position $\alpha$ is chosen such that $f$ has the same minima as $f_{\mathrm{FH}}$, i.e. $\alpha\in(0,\phi_\star]$. Note that $f$ possesses the convex-concave splitting $f = f_{\mathrm{vex}} + f_{\mathrm{cav}}$ with
\begin{align*}
    f_{\mathrm{cav}}(\phi) = \chi\phi(1-\phi), \qquad f_{\mathrm{vex}}(\phi) = f(\phi)-f_{\mathrm{cav}}(\phi)\,.
\end{align*}
The potential $g$ is chosen as
\begin{align}
    g(\phi) = f(\phi_\star) + \frac{1}{2}f''(\phi_\star)(\phi-\phi_\star)^2 \label{g_potential}
\end{align}
with the convex-concave splitting $(g_{\mathrm{vex}},g_{\mathrm{cav}})=(g,0)$. The initial velocity reads $\fatu_h^0=\bm{0}$. $\phi_h^0$ is defined as the unique function in $\mathcal{Q}_h$ that satisfies $\phi_h^0(\fatx) = 0.5+\xi_{\fatx}$ for all $\fatx\in V_h$. Here, $V_h$ denotes the set of all vertices in $\mathcal{T}_h$ and $\{\xi_{\fatx}\,|\,\fatx\,\in\,V_h\}$ is a family of random numbers that are uniformly distributed in $[-0.001,0.001]$.

\vspace{1cm}

\noindent \textbf{Results for $\chi=\ln(3)/6$.} In this case the potential function $f$ has minima at $\phi_\star=0.1$ and $\phi^\star=0.9$ (cf. the green circles in Figure \ref{err_minima0109}). As shown in Figure \ref{phi_minima0109}, the two components start to separate immediately. However, it takes some time until the level of separation reaches its maximum. Figure \ref{u_minima0109} shows the behaviour of the velocity $\fatu_h$. While the colours encode the values of $||\fatu_h||_2$, the black arrows indicate the direction of the flow. Since the flow is non-Newtonian, we obtain a flow profile that slightly deviates from the parabolic profile of Newtonian flows. We also note that the mass error as well as the mean divergence error are smaller than the threshold for the Newton error (cf. Figure \ref{err_minima0109}). 
\begin{center}
    \includegraphics[width=0.40\textwidth]{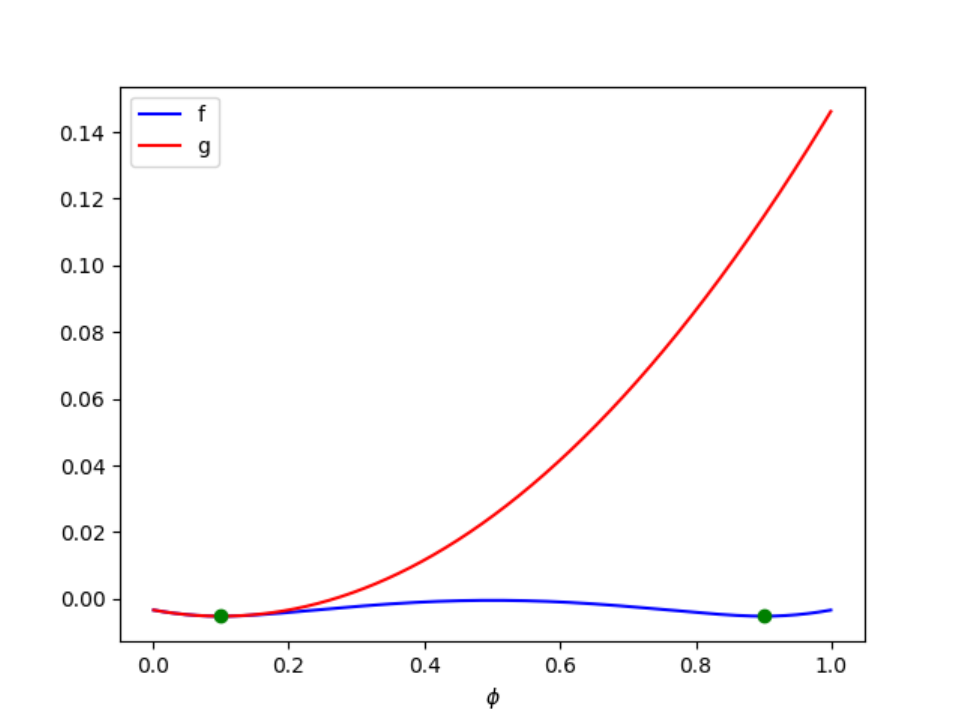} \includegraphics[width=0.40\textwidth]{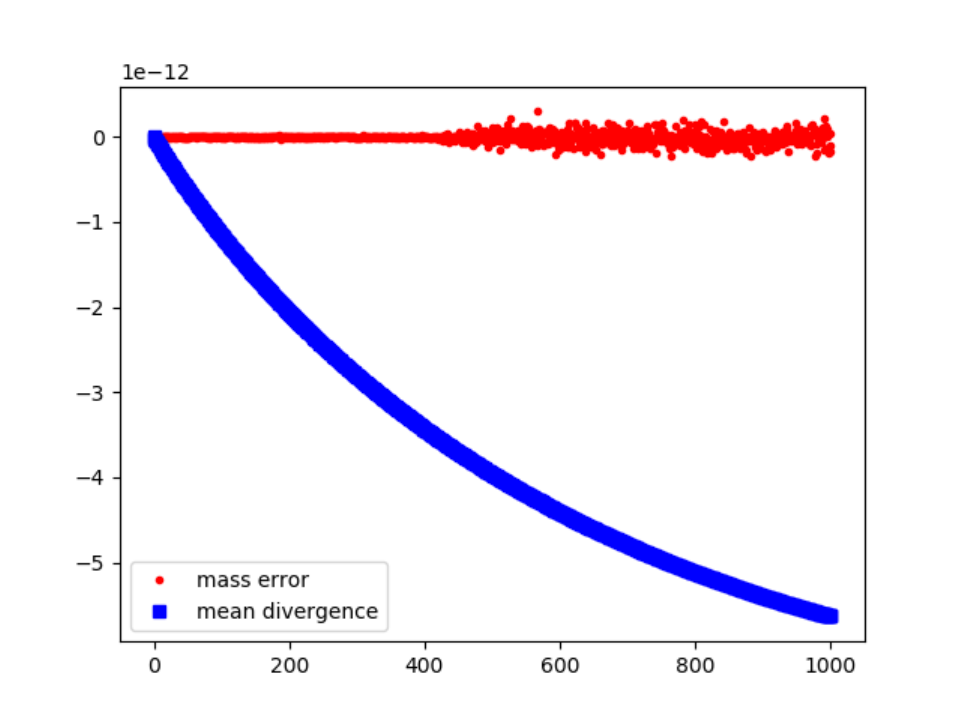} \\
    \refstepcounter{figure}\label{err_minima0109}
    Figure \arabic{figure}: The potential functions $f$ and $g$ (left) and the mass/mean divergence error (right).
\end{center}

\begin{center}
\includegraphics[width=0.7\textwidth]{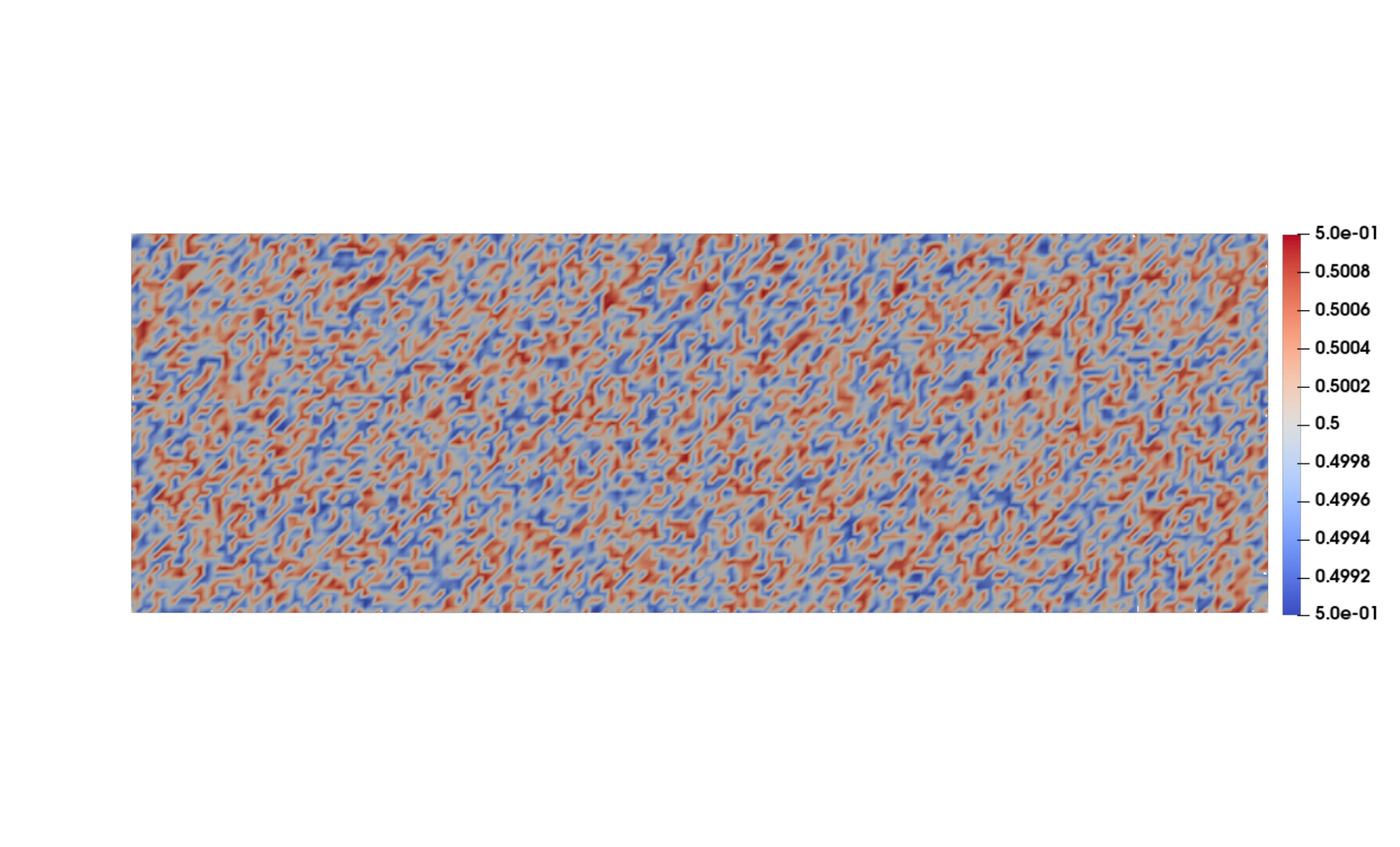} \\[-3.5cm]
\includegraphics[width=0.7\textwidth]{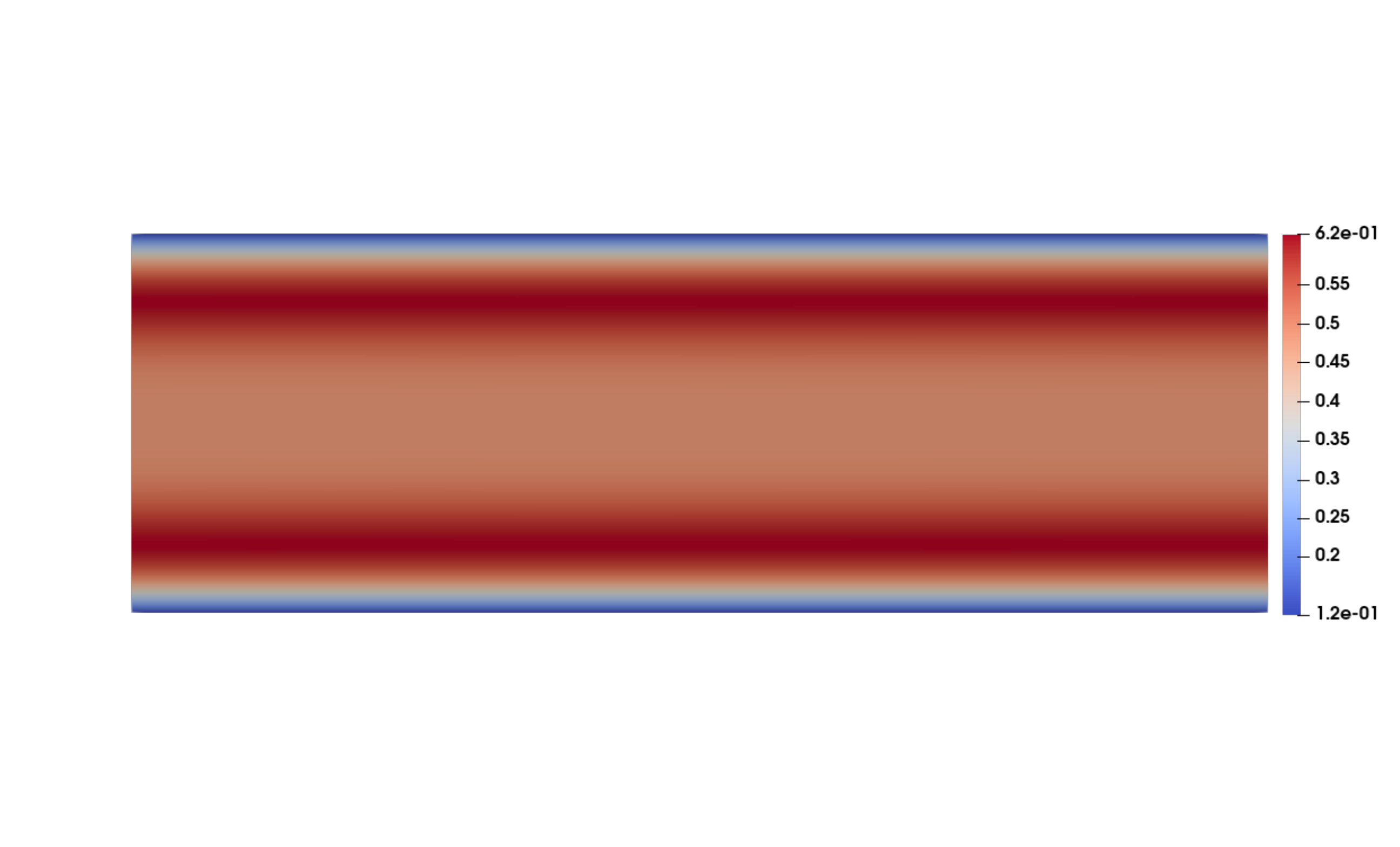} \\[-3.5cm]
\includegraphics[width=0.7\textwidth]{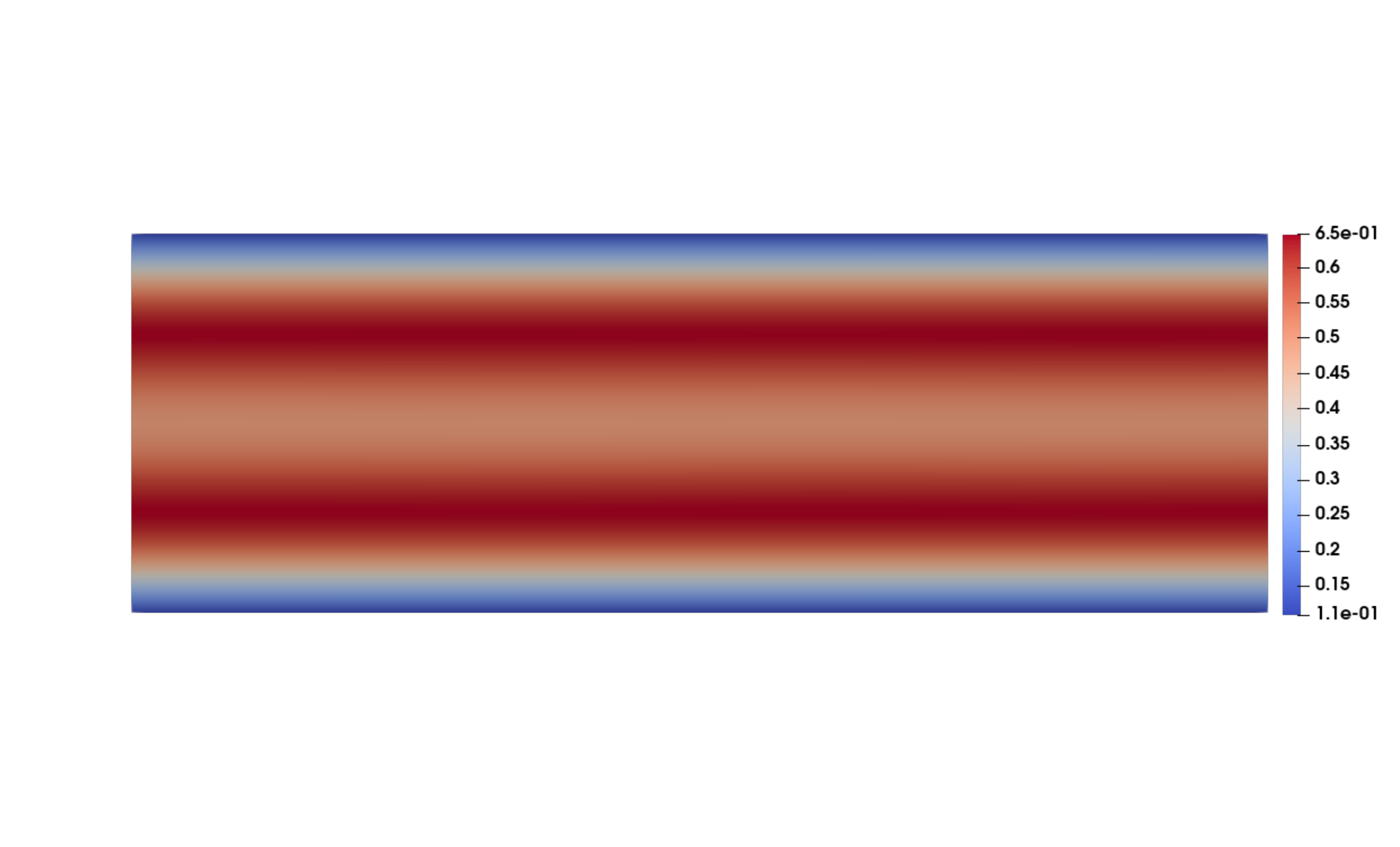} \\[-3.5cm]
\includegraphics[width=0.7\textwidth]{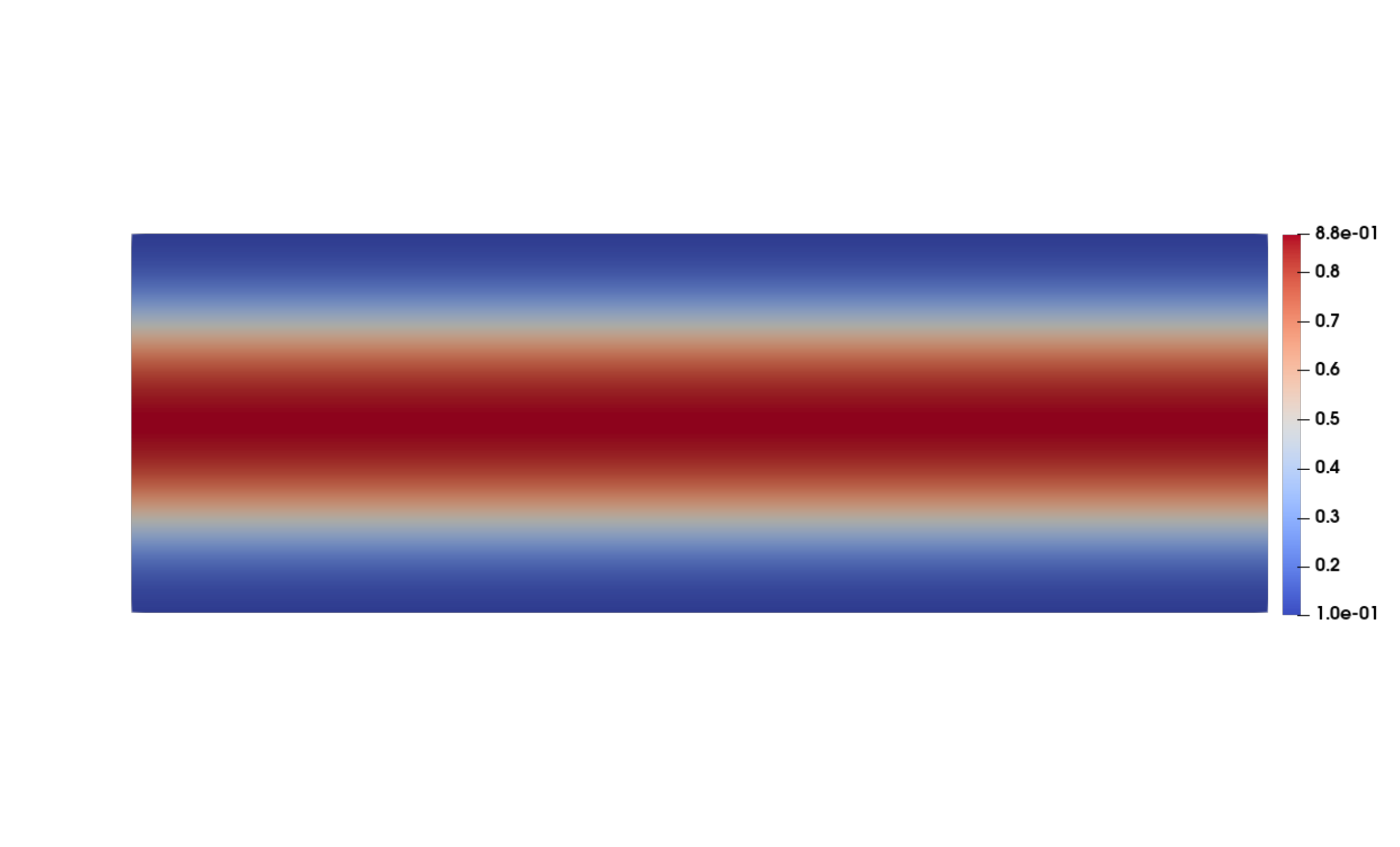} \\[-3.5cm]
\includegraphics[width=0.7\textwidth]{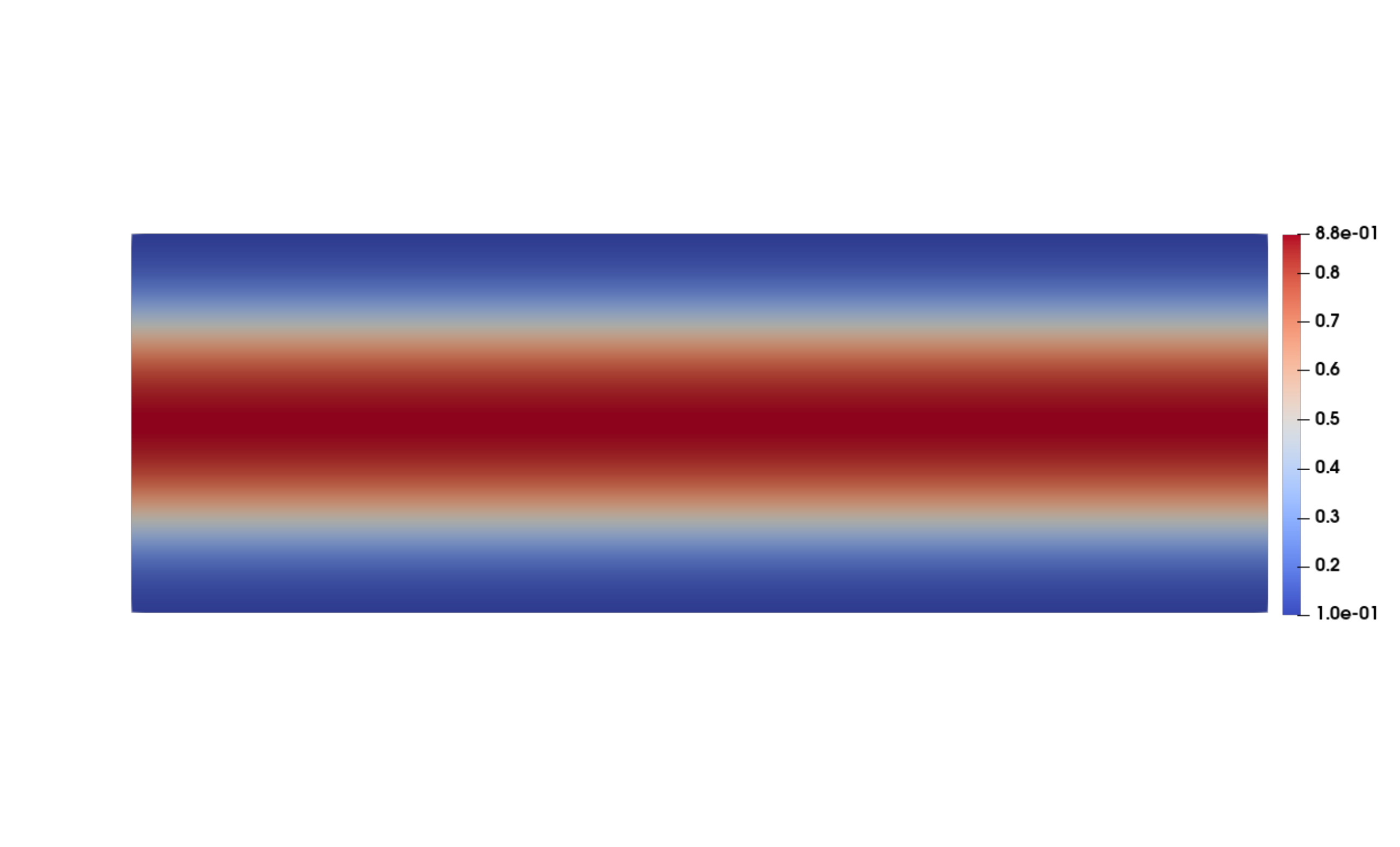} \\[-1cm]
\refstepcounter{figure}\label{phi_minima0109} 
Figure \arabic{figure}: $\phi_h(t,\cdot)$ at times $t=0.00,10.00,50.00,500.00,1000.00$.
\end{center}

\begin{center}
\includegraphics[width=0.7\textwidth]{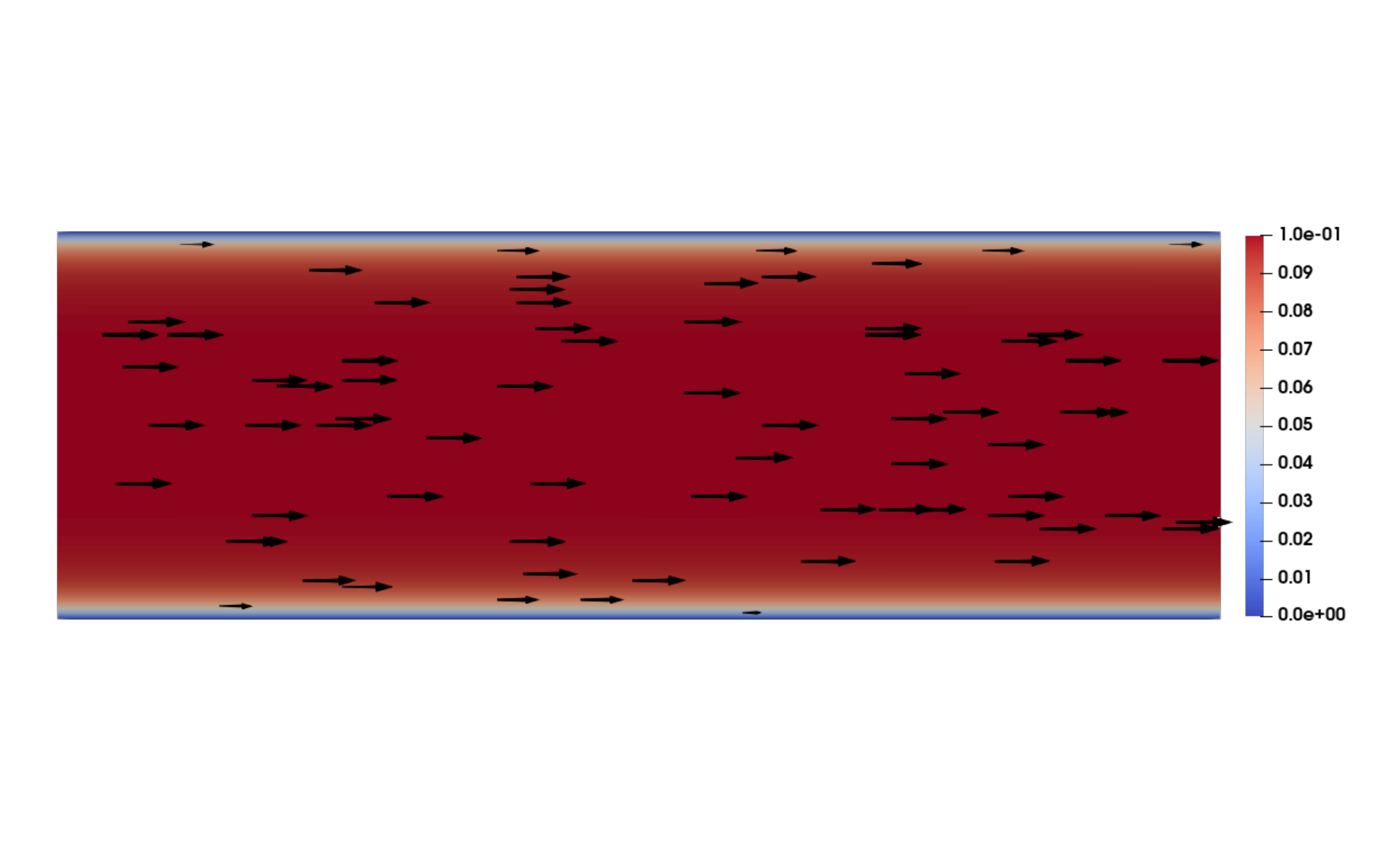}\\[-3.5cm]
\includegraphics[width=0.7\textwidth]{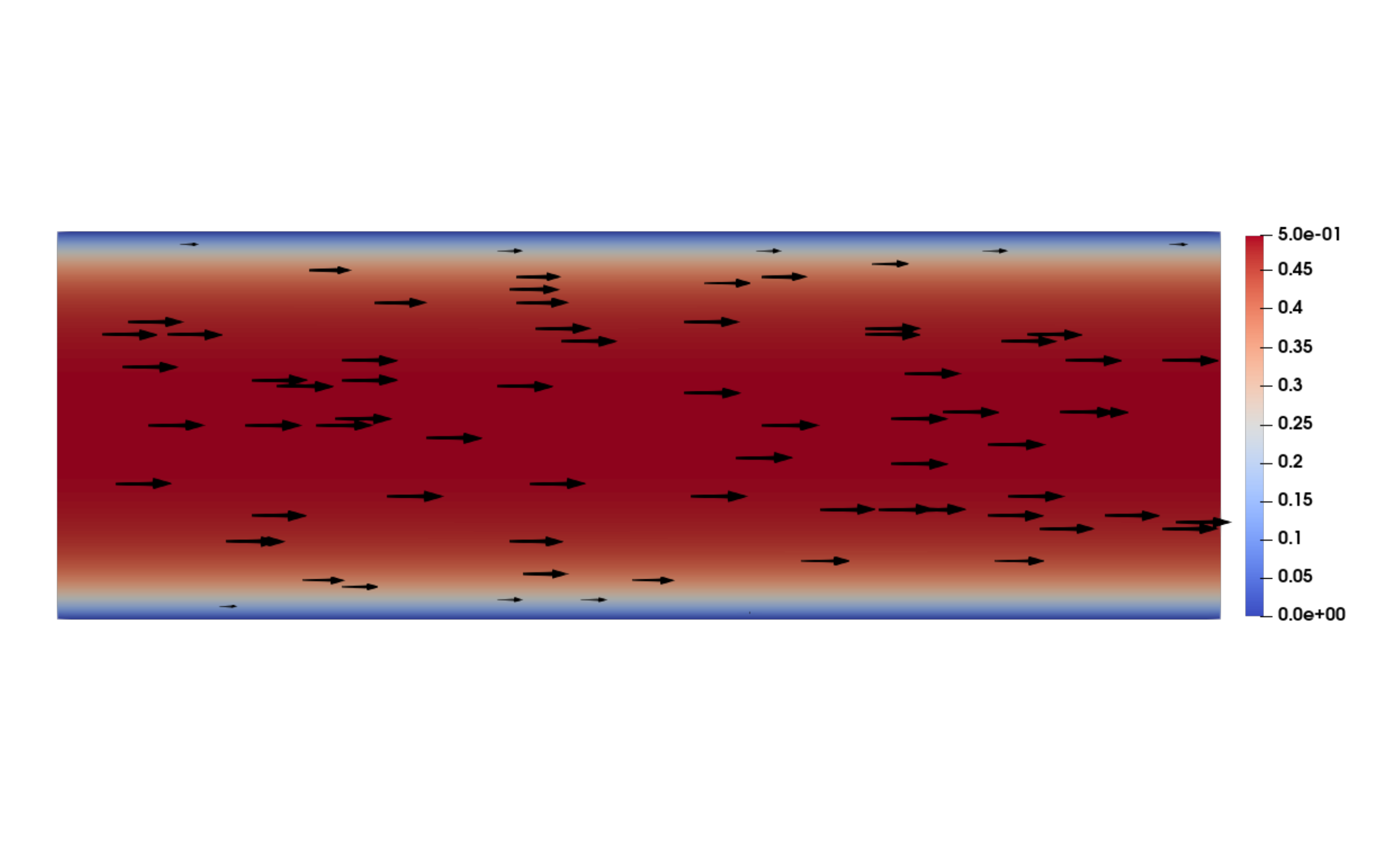}\\[-3.5cm]
\includegraphics[width=0.7\textwidth]{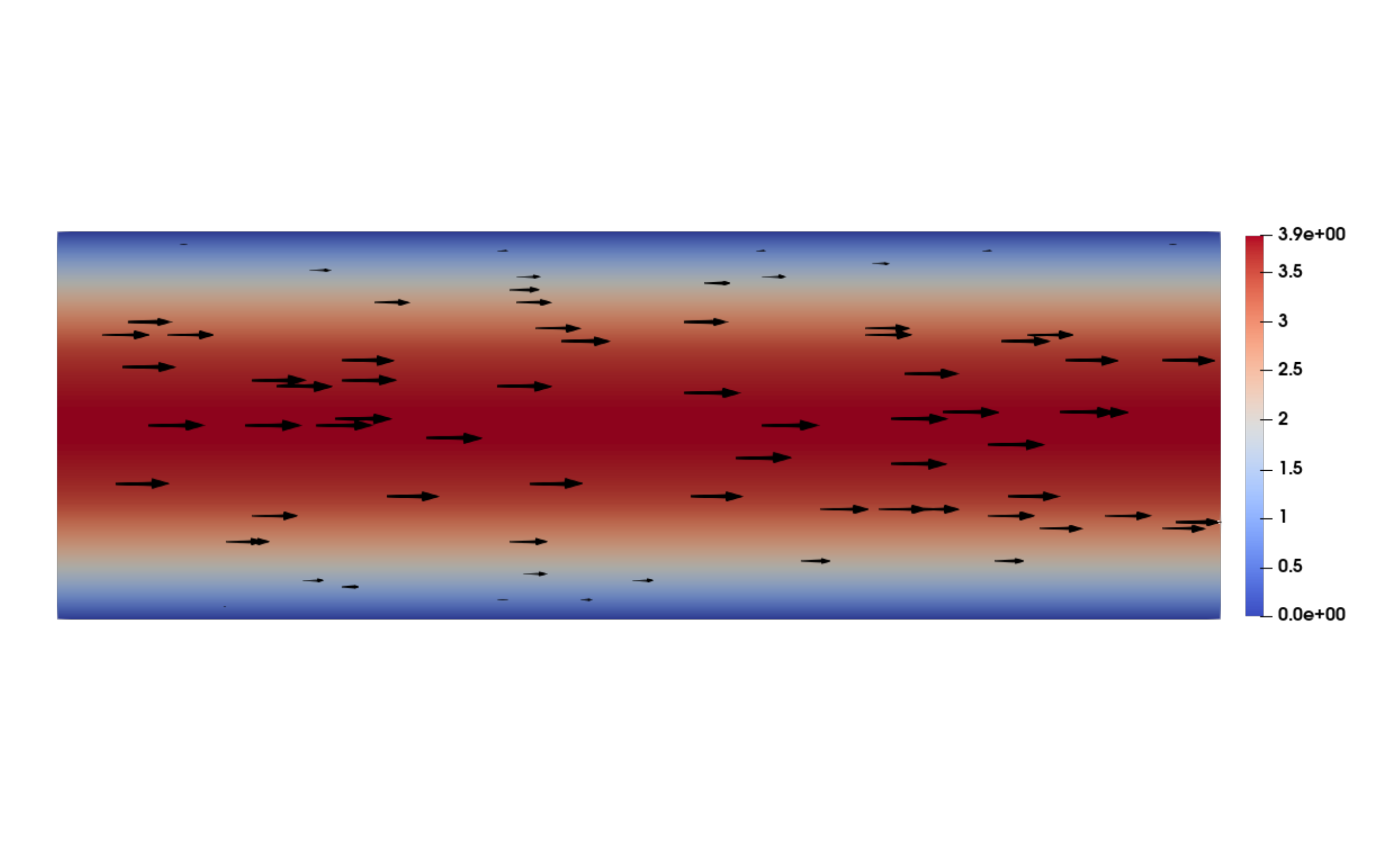}\\[-3.5cm]
\includegraphics[width=0.7\textwidth]{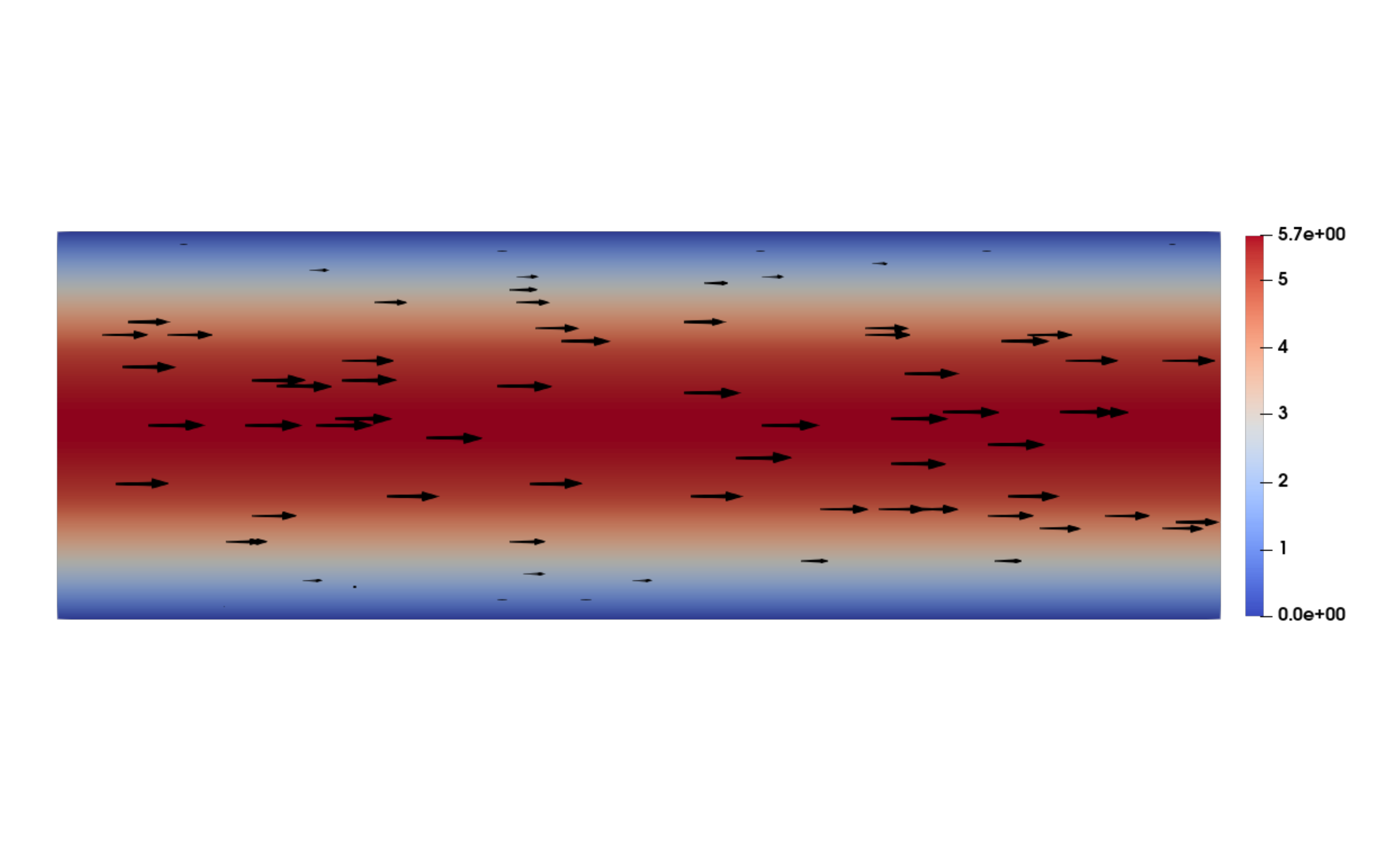}
\end{center}
\vspace{-1.5cm}
\refstepcounter{figure}\label{u_minima0109}
Figure \arabic{figure}: $||\fatu_h(t,\cdot)||_2$ at times $t=
10.00,50.00,500.00,1000.00$. The arrows indicate the flow direction and are scaled relative to $\max_{\,\overline{\Omega}}\,\{||\fatu_h(t,\cdot)||_2\}$ at the corresponding times. \hfill \phantom{.}

\vspace{1cm}

\noindent \textbf{Results for $\chi=\chi_{\mathrm{crit}}+0.001=\frac{2}{15}+0.001$.} In this case, the Flory-Huggins interaction parameter is slightly above the critical value $\chi_{\mathrm{crit}}=2/15$, whence the potential function $f$ has two minima that are close to $0.5$ (cf. the green circles in Figure \ref{errors_chi01343}). Again, we can observe a clear separation behaviour of the components (cf. Figure \ref{phi_chi01343}). However, the level of separation is much lower. We do not show snapshots of the velocity since they are very similar to those in the previous case. Again, the mass error and the mean divergence error are below the threshold for the Newton error (cf. Figure \ref{errors_chi01343}).

\begin{center}
\includegraphics[width=0.40\textwidth]{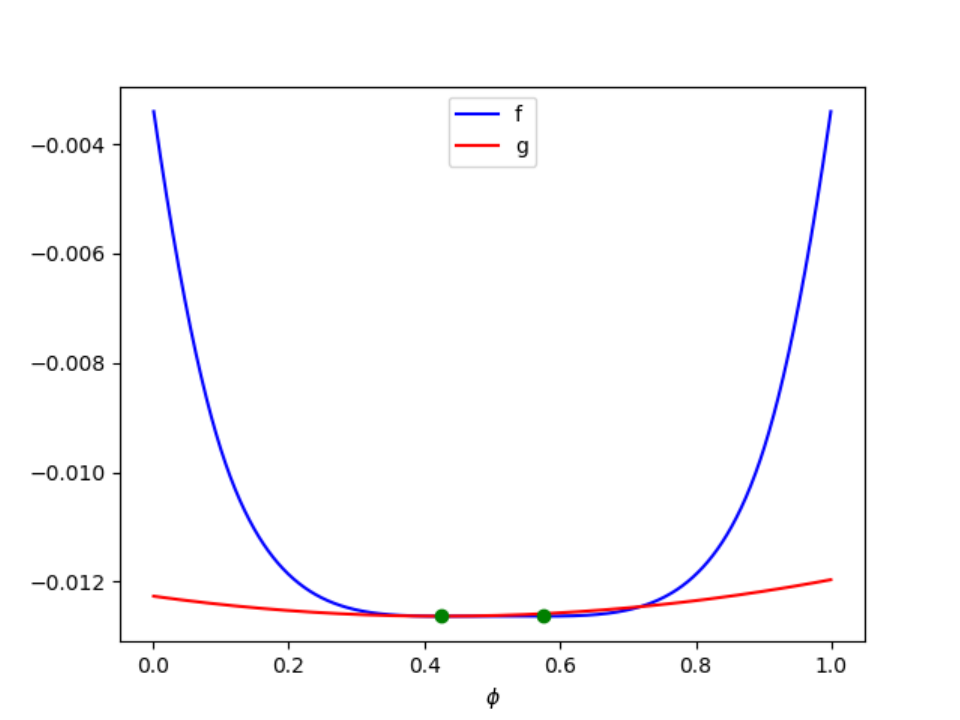} \includegraphics[width=0.40\textwidth]{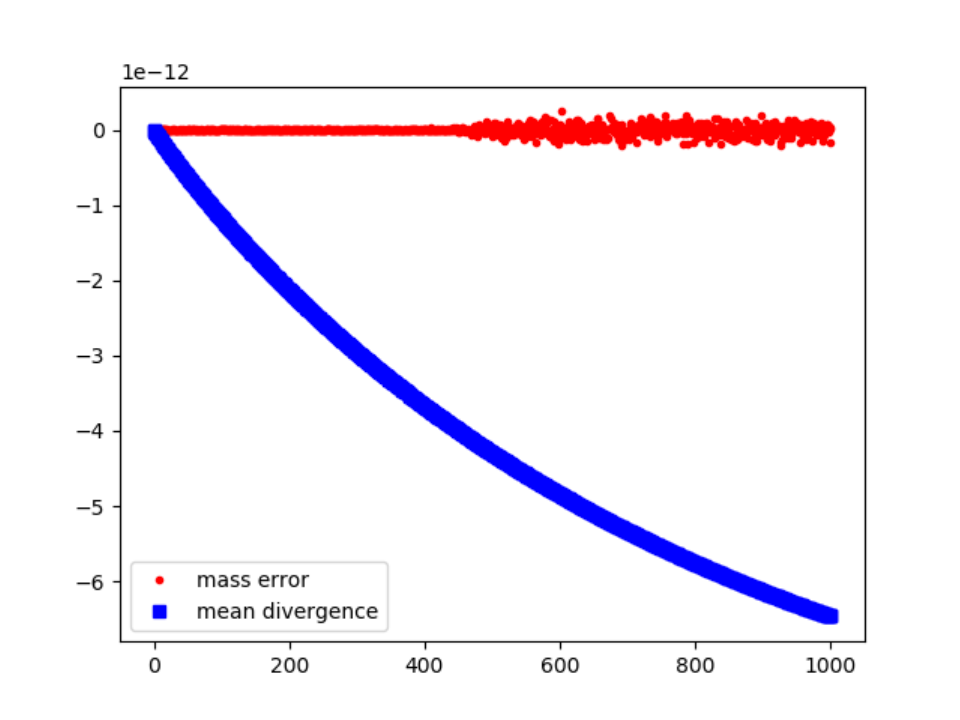} \\
\refstepcounter{figure}\label{errors_chi01343}
Figure \arabic{figure}: The potential functions $f$ and $g$ (left) and the mass/mean divergence error (right).
\end{center}

\begin{center}
\includegraphics[width=0.7\textwidth]{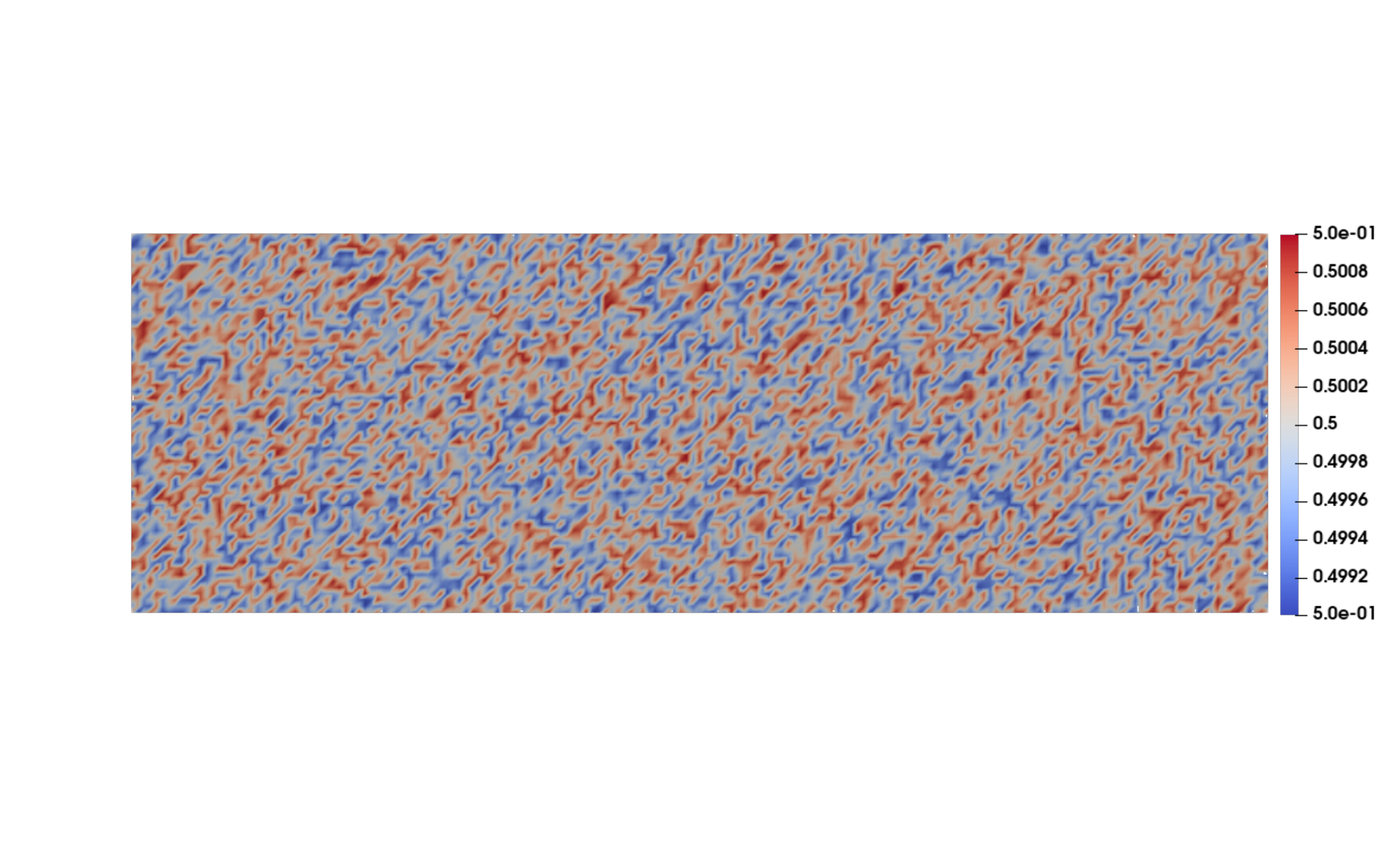}\\[-3.5cm]
\includegraphics[width=0.7\textwidth]{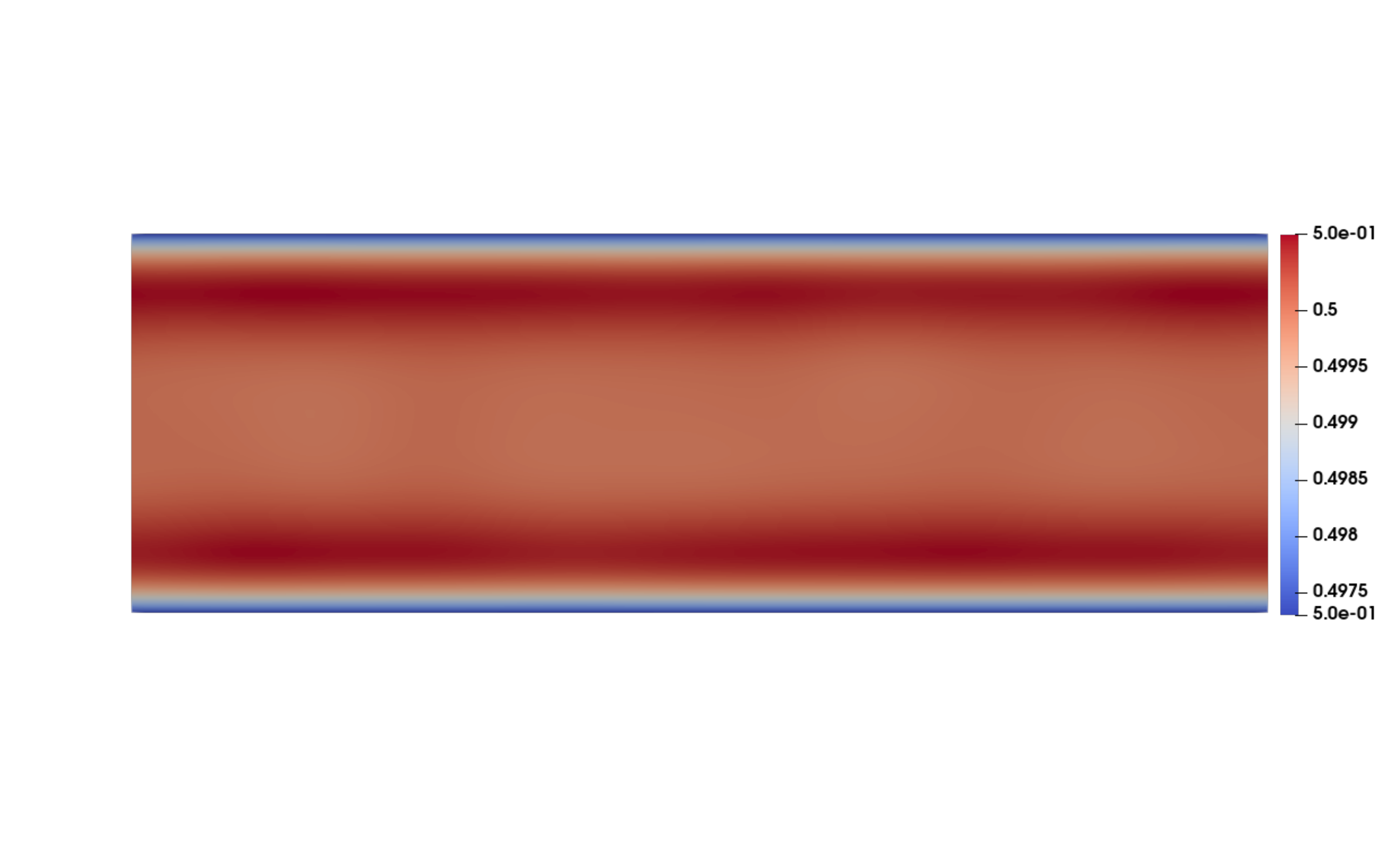}\\[-3.5cm]
\includegraphics[width=0.7\textwidth]{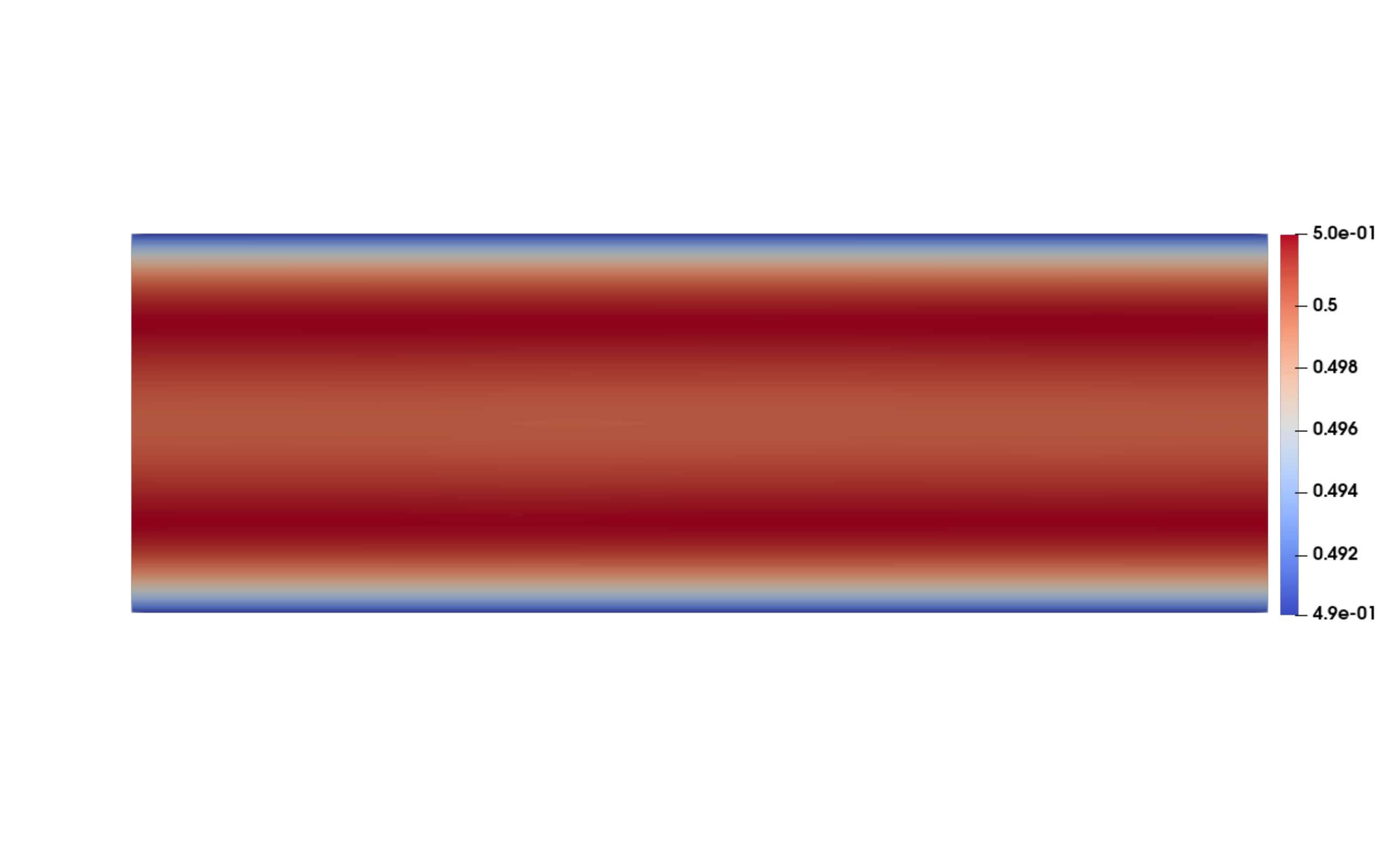}\\[-3.5cm]
\includegraphics[width=0.7\textwidth]{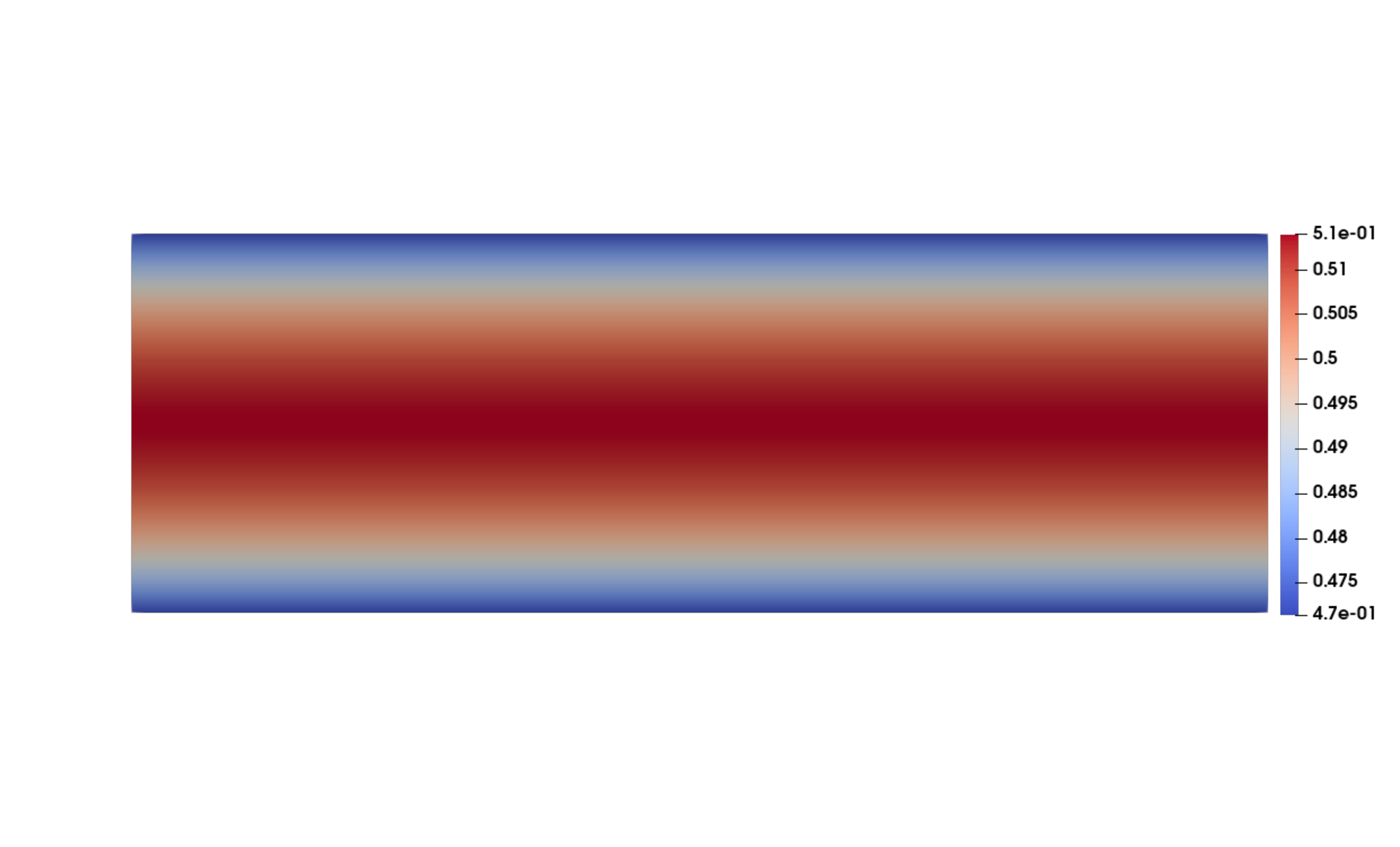}\\[-3.5cm]
\includegraphics[width=0.7\textwidth]{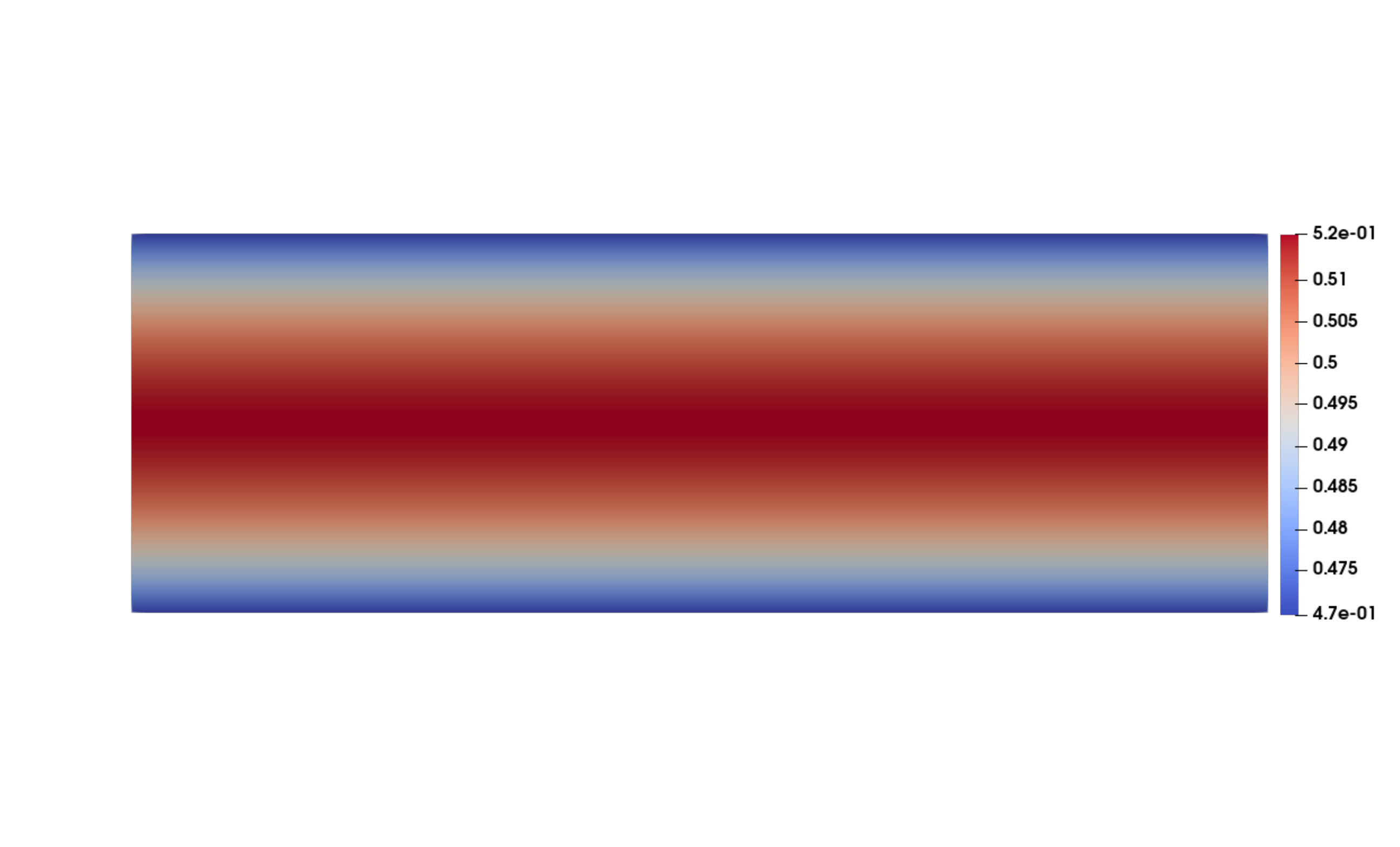} \\[-1cm]
\refstepcounter{figure}\label{phi_chi01343}
Figure \arabic{figure}: $\phi_h(t,\cdot)$ at times $t=0.00,10.00,50.00,500.00,1000.00$
\end{center}


\vspace{1cm}

\noindent \textbf{Results for $\chi=\frac{2}{15}-0.001$.} In this case, the Flory-Huggins interaction parameter is slightly below the critical value $\chi_{\mathrm{crit}}=2/15$, whence the potential function $f$ has only one minimum at $\phi_\star=\phi^\star=0.5$ (cf. the green circle in Figure \ref{errors_chi01323}). As a consequence, the components do not separate (cf. Figure \ref{phi_chi01323}). Again, we do not show snapshots of the velocity since they are very similar to those in the first case. The mass error and the mean divergence error are below the threshold for the Newton error (cf. Figure \ref{errors_chi01323}).

\begin{center}
\includegraphics[width=0.40\textwidth]{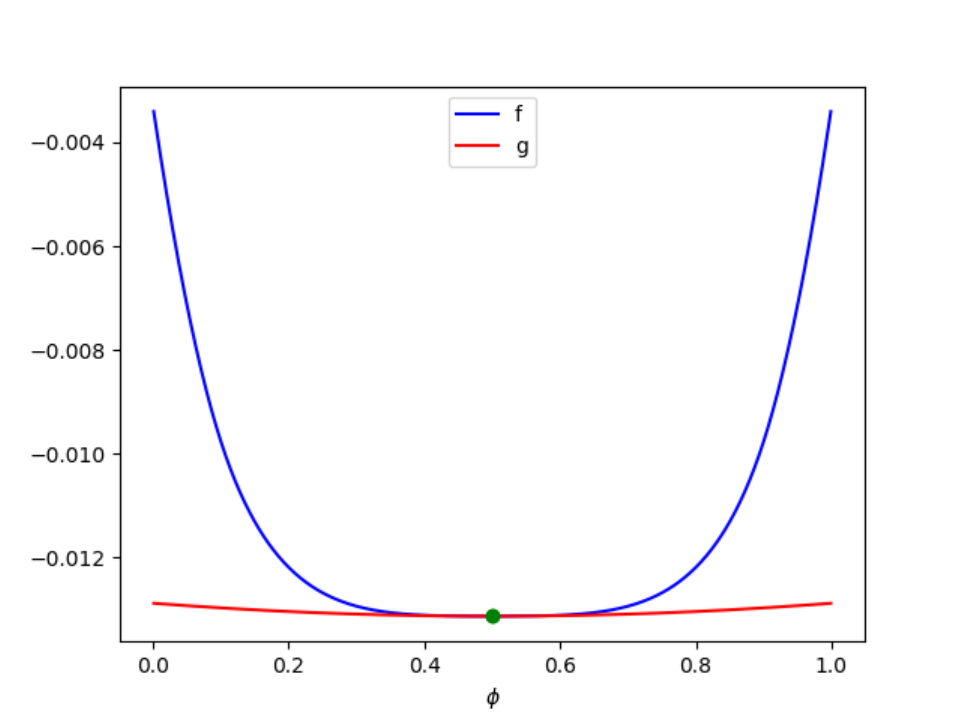} \includegraphics[width=0.40\textwidth]{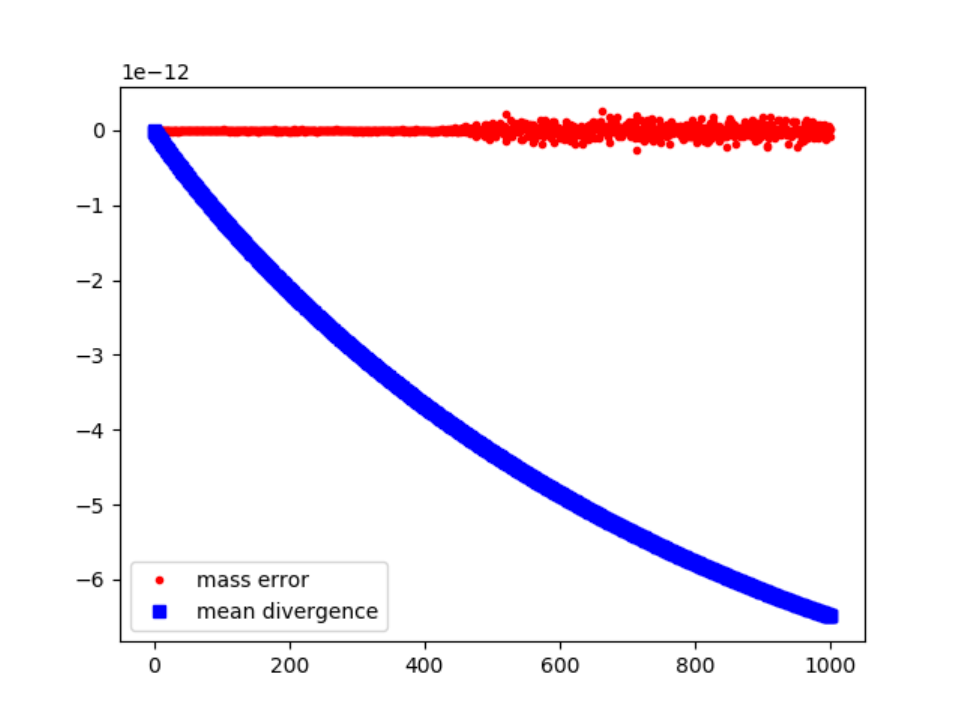} \\
\refstepcounter{figure}\label{errors_chi01323}
Figure \arabic{figure}: The potential functions $f$ and $g$ (left) and the mass/mean divergence error (right).
\end{center}

\begin{center}
\includegraphics[width=0.7\textwidth]{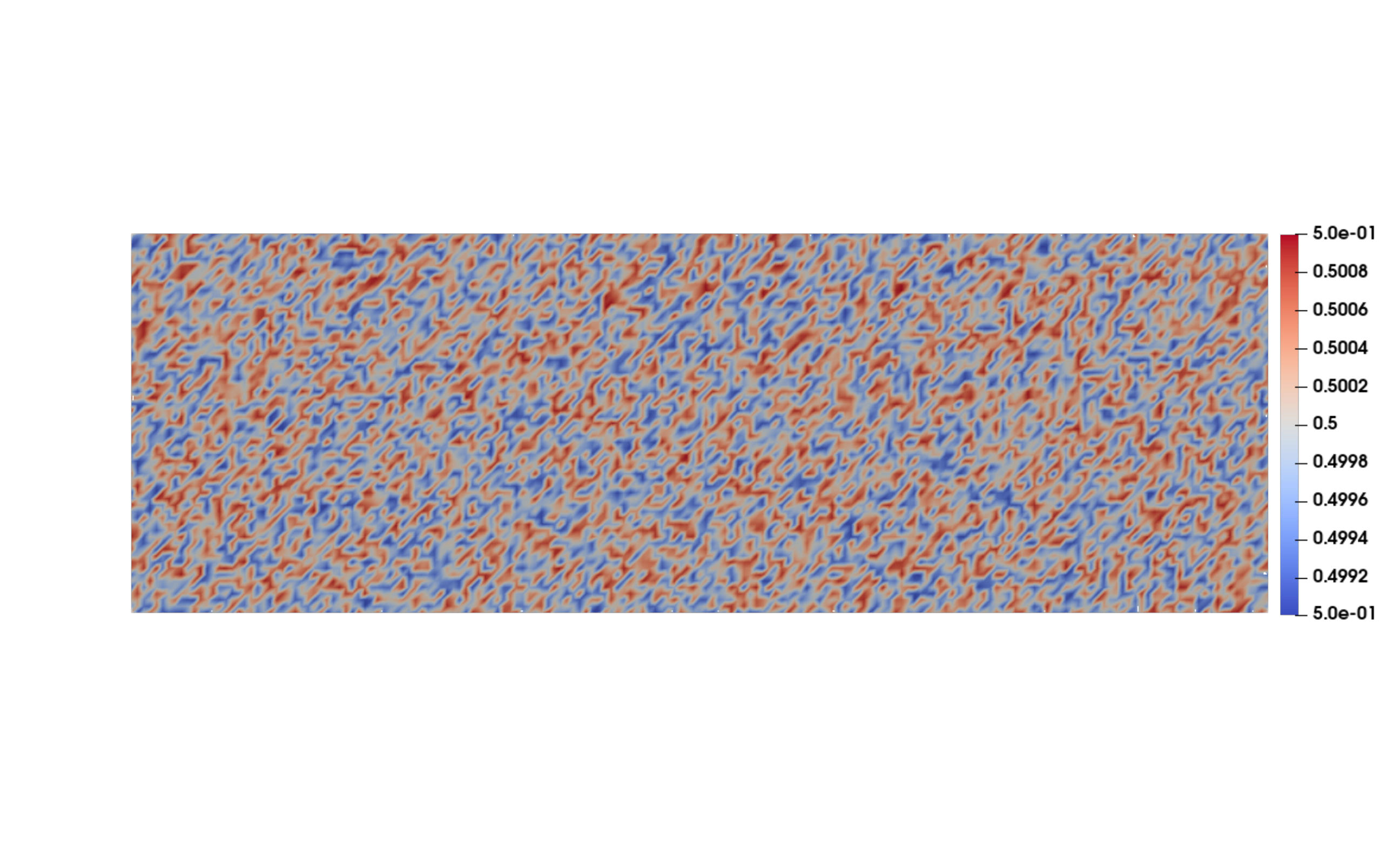}\\[-3.5cm]
\includegraphics[width=0.7\textwidth]{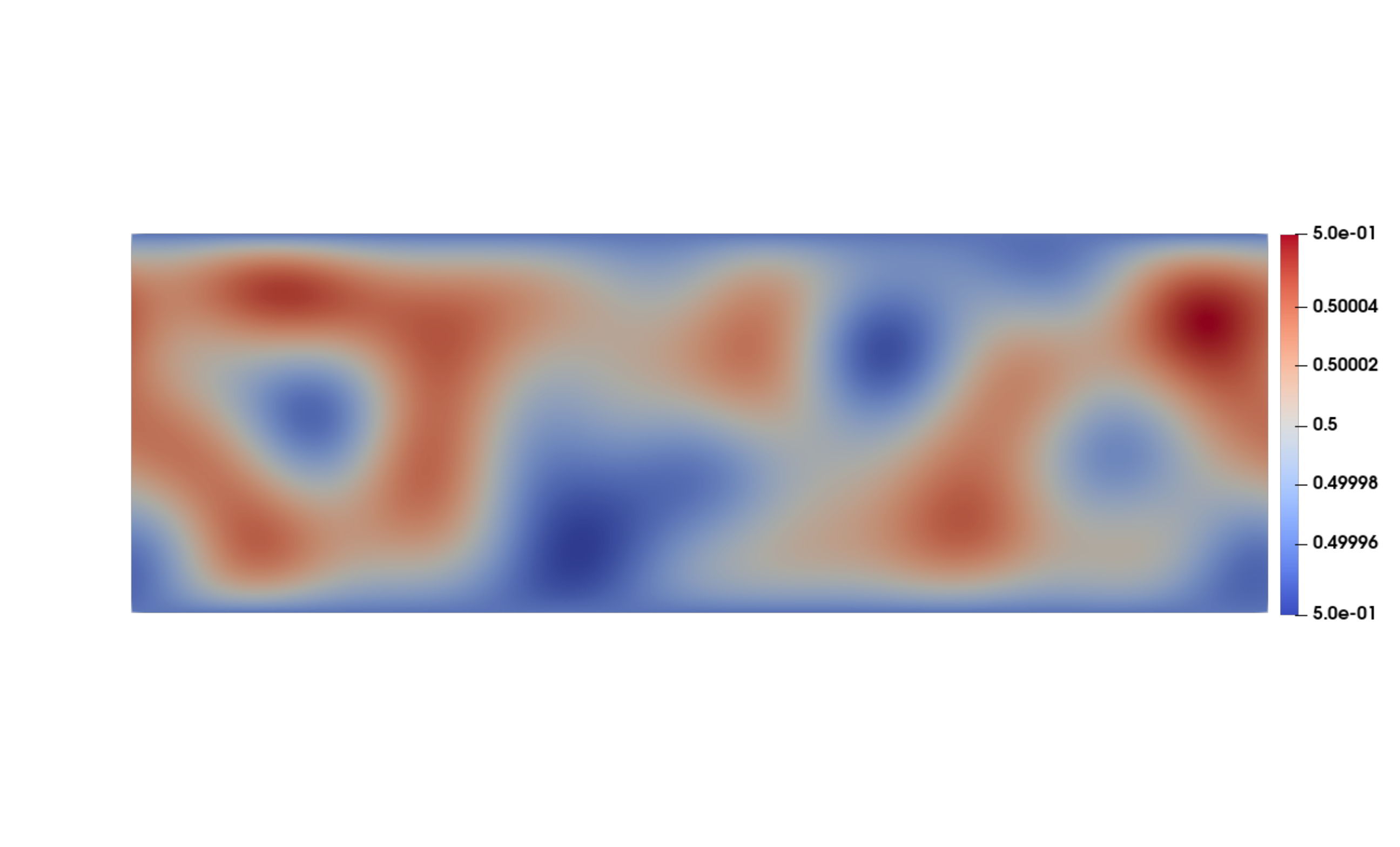}\\[-3.5cm]
\includegraphics[width=0.7\textwidth]{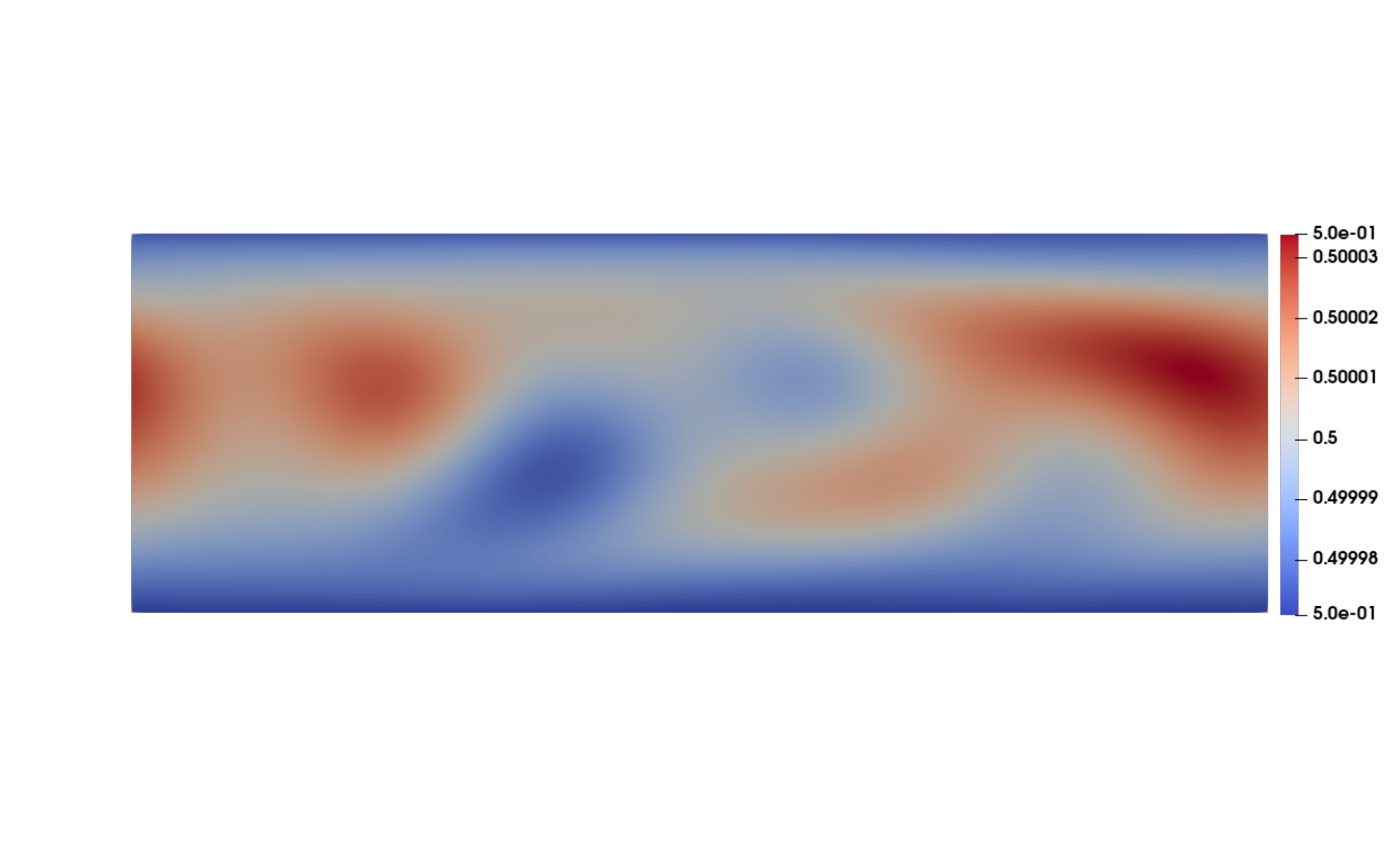}\\[-3.5cm]
\includegraphics[width=0.7\textwidth]{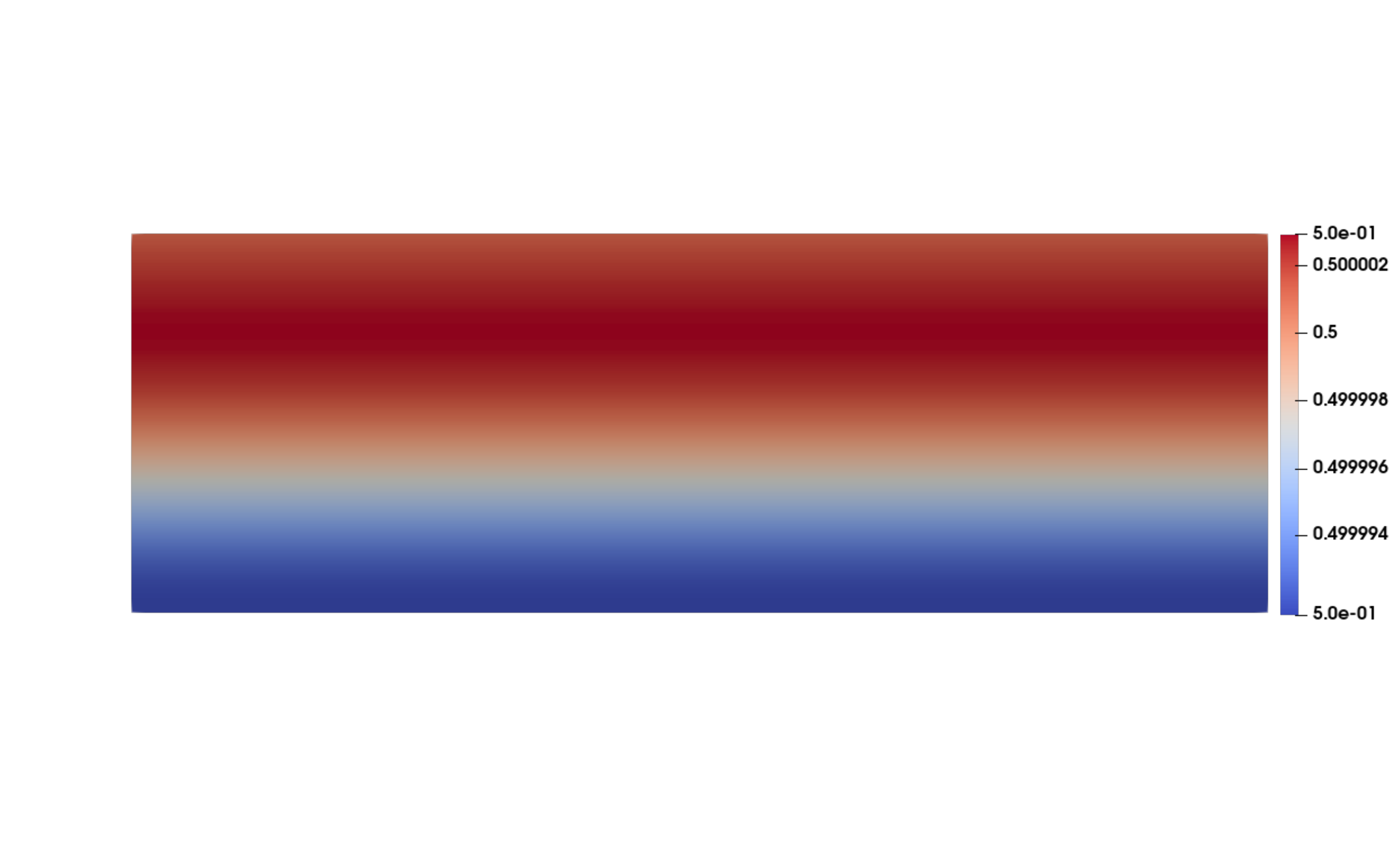}\\[-3.5cm]
\includegraphics[width=0.7\textwidth]{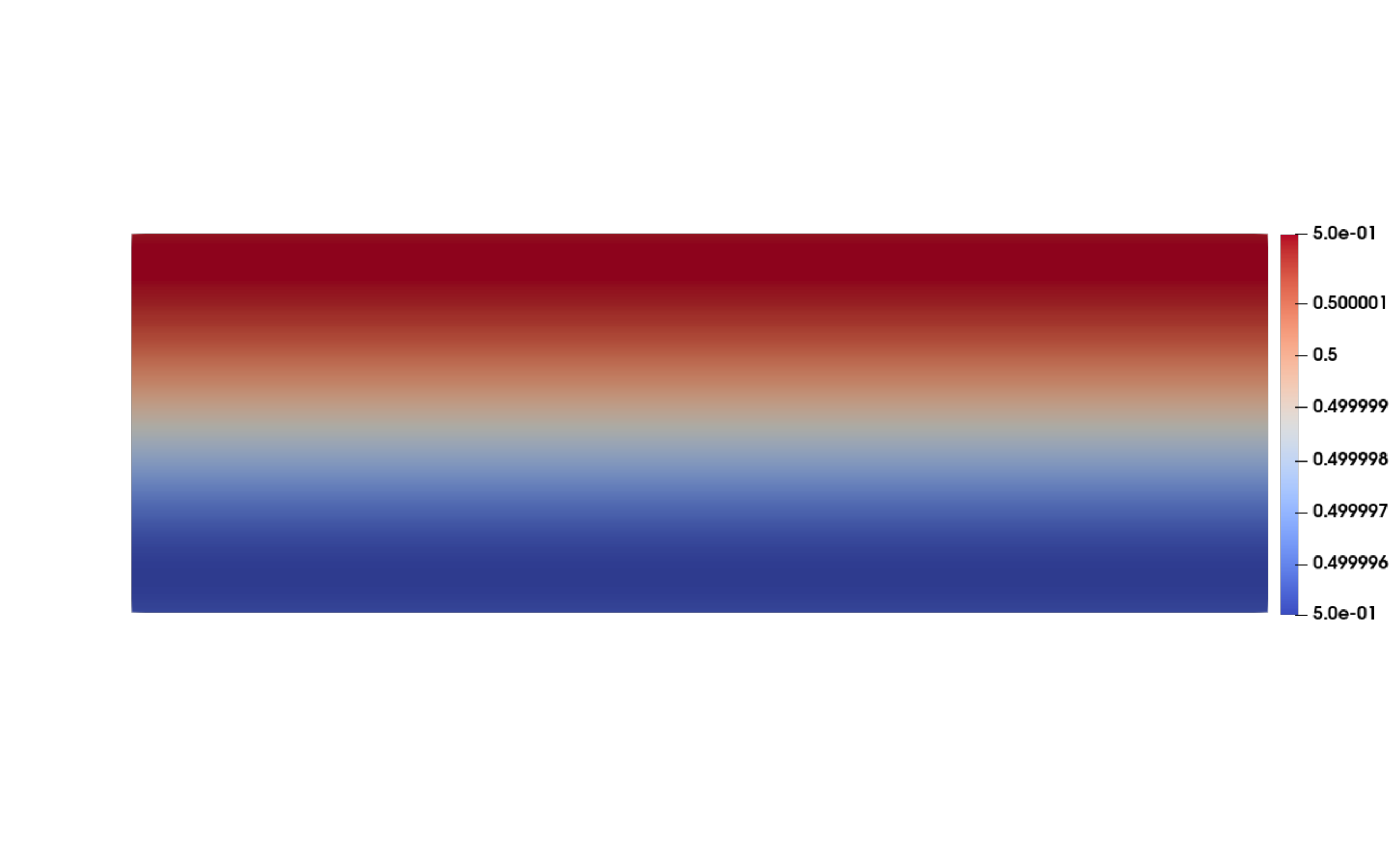}\\[-1cm]
\refstepcounter{figure}\label{phi_chi01323}
Figure \arabic{figure}: $\phi_h(t,\cdot)$ at times $t=0.00,10.00,50.00,500.00,1000.00$
\end{center}


\subsubsection{Convergence study} For the convergence study we choose $L_1=L_2=1$. The final time for the convergence study is $T=2$. The meshes are given by $\mathcal{T}_h^k = \mathcal{T}^{k,1}_h \cup \mathcal{T}^{k,2}_h$, where
\begin{align*}
    \mathcal{T}^{k,1}_h &= \left\{\mathrm{conv}\!\left\{\left.\left(\frac{\ell}{2^{k+3}},\frac{m}{2^{k+3}}\right), \left(\frac{\ell}{2^{k+3}},\frac{m+1}{2^{k+3}}\right), \left(\frac{\ell+1}{2^{k+3}},\frac{m+1}{2^{k+3}}\right)\right\}\right| (\ell,m)\in\{0,\dots,2^{k+3}-1\}^2\right\}, \\[2mm]
    \mathcal{T}^{k,2}_h &= \left\{\mathrm{conv}\!\left\{\left.\left(\frac{\ell}{2^{k+3}},\frac{m}{2^{k+3}}\right), \left(\frac{\ell+1}{2^{k+3}},\frac{m}{2^{k+3}}\right), \left(\frac{\ell+1}{2^{k+3}},\frac{m+1}{2^{k+3}}\right)\right\}\right| (\ell,m)\in\{0,\dots,2^{k+3}-1\}^2\right\},
\end{align*}
$k\in\{0,\dots,5\}$. In particular, $h_k=2^{-(k+3)}\sqrt{2}$, $k\in\{0,\dots,5\}$. The time steps are chosen as $(\deltat)_k 
= h_k/(40\sqrt{2})$, $k\in\{0,\dots,5\}$. $\gamma, s, \fatF, M, f, g, \eta$ are chosen as described in \eqref{parameters_and_functions}--\eqref{g_potential}. The Flory-Huggins interaction parameter is fixed as $\chi=\ln(3)/6$. The discrete initial data $\phi_h^0$ and $\fatu_h^0$ are obtained via interpolation of the continuous initial data
\begin{align*}
    \phi_0(\fatx) = 0.5 + 0.001\cos(6\pi x_1)\cos(2\pi x_2), \qquad \fatu_0(\fatx) = \bm{0}
\end{align*}
(cf. Remark \ref{remark_apriori_estimates}). For the errors 
\begin{align}
    e_{h}^{z}(k) = z_{h_5} - z_{h_k}, \qquad k\in\{0,...,5\}, \qquad z\in\{\phi,\mu,\bm{u},p\}, \notag 
\end{align}
we obtain the following results:

\begin{table}[ht]
\begin{center}
\begin{tabular}{c||c|c||c|c||c|c}
$k$ & $||e_{h}^{\phi}||_{L^{\infty}(H^1)}$ & EOC & $||e_{h}^{\bm{u}}||_{L^{\infty}(L^2)}$ & EOC & $||e_{h}^{\mu}||_{L^2(H^1)}$ & EOC \vphantom{\scalebox{1.5}{A}} \\ \hline 
0 & 8.212e-01 & --- & 8.653e-03 & --- & 2.698e-01 & --- \vphantom{\scalebox{1.5}{A}} \\ 
1 & 3.910e-01 & 1.071 & 4.711e-04 & 4.199 & 1.241e-01 & 1.121 \vphantom{\scalebox{1.5}{A}} \\ 
2 & 1.913e-01 & 1.031 & 5.400e-05 & 3.125 & 6.235e-02 & 0.993 \vphantom{\scalebox{1.5}{A}} \\ 
3 & 9.299e-02 & 1.041 & 7.201e-06 & 2.907 & 3.072e-02 & 1.021 \vphantom{\scalebox{1.5}{A}} \\ 
4 & 4.155e-02 & 1.162 & 9.926e-07 & 2.859 & 1.379e-02 & 1.156 \vphantom{\scalebox{1.5}{A}} 
\end{tabular}
\end{center}
\end{table}

\begin{table}[ht]
\begin{center}
\begin{tabular}{c||c|c||c|c}
$k$ & $||e_{h}^{p}||_{L^2(L^2)}$ & EOC & $||e_{h}^{\bm{u}}||_{L^2(H^1)}$ & EOC \vphantom{\scalebox{1.5}{A}} \\ \hline 
0 & 6.076e-03 & --- & 3.189e-01 & --- \vphantom{\scalebox{1.5}{A}} \\ 
1 & 1.402e-03 & 2.116 & 5.683e-02 & 2.488 \vphantom{\scalebox{1.5}{A}} \\ 
2 & 3.430e-04 & 2.031 & 1.530e-02 & 1.893 \vphantom{\scalebox{1.5}{A}} \\ 
3 & 8.329e-05 & 2.042 & 4.303e-03 & 1.830 \vphantom{\scalebox{1.5}{A}} \\ 
4 & 1.816e-05 & 2.197 & 1.066e-03 & 2.014 \vphantom{\scalebox{1.5}{A}} 
\end{tabular} \\[0.5cm]
\refstepcounter{table}\label{eoc_tables}
Table \arabic{table}: errors and experimental orders of convergence 
\end{center}
\end{table}

Here, the $k$-th EOC for a quantity $z\in\{\phi,\mu,\bm{u},p\}$ with respect to some norm $||\cdot||$ is computed from the errors $e_{h}^{z}(k-1)$, $e_{h}^{z}(k)$ via the formula
\begin{align}
    \mathrm{EOC}(k) = \log_2\!\left(\frac{||e_h^{z}(k-1)||}{||e_h^{z}(k)||}\right), \qquad k\in\{1,\dots,4\}. \notag
\end{align}

\section{Summary and outlook}
In this paper, we investigate the flow behaviour of dense melts of flexible and semiflexible ring polymers near confining walls using a hybrid multiscale modelling approach. At the microscopic level, we conduct MD simulations and utilise the Irving-Kirkwood formalism to compute an averaged stress tensor for input into our macroscopic model. This macroscopic model is based on a Cahn-Hilliard-Navier-Stokes system, incorporating both dynamic and no-slip boundary conditions. We perform numerical simulations of the macroscopic flow using a semi-implicit finite element method and provide rigorous proofs demonstrating the solvability and  energy stability of our numerical scheme. Phase segregation under flow between flexible and semiflexible rings, as observed in microscopic simulations, can be enforced in the macroscopic description by introducing effective attractive forces.
In the future, it will be interesting to investigate the relation between phase segregation and critical values of the Flory-Huggins interaction parameter more deeply. The goal will be to derive a data-driven study for the respective potential functions responsible for attractive forces. Having obtained energy stability estimates of finite element solutions, a further goal is to derive a rigorous error analysis of the numerical scheme using the relative energy technique.

\section*{Acknowledgements}
This project was funded by the Deutsche Forschungsgemeinschaft (DFG, German Research Foundation) in the framework to the collaborative research center ``Multiscale Simulation Methods for Soft-Matter Systems'' (TRR 146) under Project No. 233630050. M.L. gratefully acknowledges support of the Gutenberg Research College of the University of Mainz and the Mainz Institute of Multiscale
Modeling.

\bibliographystyle{plain}
\bibliography{references_phys,references_math}

\appendix
\section{Viscosity fitting}\label{sec_fitting}
In the following, we list the values for the parameters $\{\eta_{i,0}, \eta_{i,\infty}, a_{i,1}, a_{i,2}, a_{i,3}\}_{i\,=\,0,\dots,6}$ resulting from a least-squares fitting of the MD data shown in Figure \ref{fig1}a to the Carreau-Yasuda model \eqref{carreau-yasuda}. 

\begin{center}
\begin{tabular}{c|c|c||c|c|c}
    visc. & param. & value & visc. & param. & value \\ 
    part & & & part & &  \\ \hline
    $\eta_0$ & $\eta_{0,0}$\footnotemark & 2525.691875603924 & $\eta_1$ & $\eta_{1,0}$ & 424.982 \vphantom{$\overline{\overline{A}}$} \\
    & $\eta_{0,\infty}$ & 1.426364395540124 & & $\eta_{1,\infty}$ & 1.1046117170073273 \\
    & $a_{0,1}$ & $-0.21522922980127146$ & & $a_{1,1}$ & $-0.6487544203489711$ \\
    & $a_{0,2}$ & 59790.90467037572 & & $a_{1,2}$ & 11706.366708114712 \\
    & $a_{0,3}$ & 3.6355896735077615 & & $a_{1,3}$ & 1.0702121238758442 
\end{tabular}
\footnotetext{This value is a result of the fitting. Due to time constraints, it was not possible to determine it via MD simulations.}
         
\vspace{0.5cm}

\begin{tabular}{c|c|c||c|c|c}    
    visc. & param. & value & visc. & param. & value \\ 
    part & & & part & &  \\ \hline
    $\eta_3$ & $\eta_{3,0}$ & 40.5981 & $\eta_4$ & $\eta_{4,0}$ & 22.0876 \vphantom{$\overline{\overline{A}}$} \\
    & $\eta_{3,\infty}$ & 0.8298572446649171 & & $\eta_{4,\infty}$ & 0.7213002896042175  \\
    & $a_{3,1}$ & $-0.45749510263507903$ & & $a_{4,1}$ & $-0.34756459397139766$ \\
    & $a_{3,2}$ & 762.3394789734967 & & $a_{4,2}$ & 366.8389770486728 \\
    & $a_{3,3}$ & 1.2528181514824517 & & $a_{4,3}$ & 1.4842731261602664   
\end{tabular}
    
\vspace{0.5cm}

\begin{tabular}{c|c|c||c|c|c}
    visc. & param. & value & visc. & param. & value \\ 
    part & & & part & &  \\ \hline
    $\eta_5$ & $\eta_{5,0}$ & 15.2896 & $\eta_6$ & $\eta_{6,0}$ & 8.96233 \vphantom{$\overline{\overline{A}}$} \\
    & $\eta_{5,\infty}$ & 0.2981165425675458 & & $\eta_{6,\infty}$ & 1.7711189990810314  \\
    & $a_{5,1}$ & $-0.25999457614582533$ & & $a_{6,1}$ & $-1.1046100081035466$ \\
    & $a_{5,2}$ & 279.906851371738 & & $a_{6,2}$ & 23.252692619833642 \\
    & $a_{5,3}$ & 1.6164517304463373 & & $a_{6,3}$ & 0.9266624247139768    
\end{tabular}
    
\vspace{0.5cm}

\begin{tabular}{c|c|c}
    visc. & param. & value \\ 
    part & &  \\ \hline
    $\eta_7$ & $\eta_{7,0}$ & 5.98412 \vphantom{$\overline{\overline{A}}$} \\
    & $\eta_{7,\infty}$ & 0.9641347243968266 \\
    & $a_{7,1}$ & $-0.24083731713558557$ \\
    & $a_{7,2}$ & 41.67183783711502 \\
    & $a_{7,3}$ & 1.7990824755243957 
\end{tabular}

\end{center}

\section{Inequalities}\label{sec_inequalities}
Here, we list the inequalities used in Section \ref{sec_macroscopic}. We start with the inequalities attributed to Young and Hölder. Both inequalities are proven in the proof of \citep[Theorem 2.4]{Adams}.
\begin{theorem}[Young's inequality]
    Let $a,b\geq 0$ and $p,q\in(1,\infty)$ such that $1/p+1/q=1$. Then
    \begin{align}
        ab \leq \frac{a^p}{p} + \frac{b^q}{q}. \label{young}
    \end{align}
    Equality holds if and only if $a^p=b^q$.
\end{theorem}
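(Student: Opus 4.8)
The plan is to derive \eqref{young} from the concavity of the natural logarithm, which is the quickest route given that the hypothesis $1/p+1/q=1$ already exhibits $1/p$ and $1/q$ as convex weights summing to $1$. First I would dispose of the degenerate cases separately: if either $a=0$ or $b=0$, then the left-hand side equals $0$ while the right-hand side is non-negative, so the inequality holds trivially. This reduces the main argument to the case $a,b>0$.

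For $a,b>0$ I would apply Jensen's inequality (equivalently, the defining property of concavity) for $\ln$ on $(0,\infty)$ to the two points $a^p$ and $b^q$ with weights $1/p$ and $1/q$:
\begin{align}
    \ln\!\left(\frac{a^p}{p} + \frac{b^q}{q}\right) \;\geq\; \frac{1}{p}\,\ln\!\left(a^p\right) + \frac{1}{q}\,\ln\!\left(b^q\right) = \ln a + \ln b = \ln(ab). \notag
\end{align}
Since $\ln$ is strictly increasing, exponentiating both sides yields \eqref{young} at once.

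For the equality statement I would exploit that $\ln$ is in fact strictly concave, so that the Jensen step above is an equality if and only if the two points coincide, i.e.\ $a^p=b^q$; together with the degenerate cases (where equality forces $a^p=b^q=0$) this gives the claimed characterization. As an alternative to Jensen's inequality one could fix $b$ and minimize $f(a)=a^p/p + b^q/q - ab$ over $a\geq 0$: the stationarity condition $a^{p-1}=b$ is equivalent to $a^p=b^q$, one checks $f=0$ at that point, and $f''>0$ confirms a strict global minimum.

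I do not anticipate a genuine obstacle, as the inequality is elementary; the only point that requires some care is the equality characterization, where merely invoking concavity is insufficient and one must appeal to the \emph{strict} concavity of $\ln$ (or, in the calculus variant, to the strict convexity of $f$).
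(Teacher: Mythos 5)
Your proof is correct, including the separate treatment of the degenerate cases $a=0$ or $b=0$ and the appeal to the \emph{strict} concavity of $\ln$ (rather than mere concavity) for the equality characterization, which is exactly the point that is often glossed over. Note that the paper itself contains no proof of this statement -- it defers to the proof of Theorem 2.4 in Adams's monograph on Sobolev spaces -- and the argument given there is essentially the same concavity-of-the-logarithm computation (equivalently, the one-variable calculus minimization you sketch as an alternative), so your route coincides with the cited one.
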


\begin{theorem}[Hölder's inequality]\label{thm_hölder}
    Let $p\in[1,\infty)$ and $q=p/(p-1)$ if $p>1$ and $q=\infty$ if $p=1$. Then for every pair of functions $(f,g)\in L^p(\Omega)\times L^q(\Omega)$ the product $fg$ is an element of $L^1(\Omega)$ and, in particular,
    \begin{align}
        ||fg||_{L^1(\Omega)} \leq ||f||_{L^p(\Omega)}||g||_{L^q(\Omega)}. \label{hölder}
    \end{align}
\end{theorem}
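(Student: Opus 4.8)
The plan is to reduce \eqref{hölder} to a pointwise application of Young's inequality \eqref{young} after a suitable normalisation, first disposing of the degenerate cases. Note at the outset that, since $f\in L^p(\Omega)$ and $g\in L^q(\Omega)$, both norms appearing on the right-hand side are finite, and the product $fg$ is measurable because $f$ and $g$ are; so the only issue is integrability together with the quantitative bound.

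First I would treat the case $p=1$, $q=\infty$ separately, since here Young's inequality does not apply. In this case one has the pointwise estimate $|f(\fatx)g(\fatx)|\leq |f(\fatx)|\,||g||_{L^\infty(\Omega)}$ for almost every $\fatx\in\Omega$; integrating over $\Omega$ immediately yields $fg\in L^1(\Omega)$ together with $||fg||_{L^1(\Omega)}\leq ||f||_{L^1(\Omega)}||g||_{L^\infty(\Omega)}$, which is \eqref{hölder}.

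For the main case $1<p<\infty$, I would first observe that if $||f||_{L^p(\Omega)}=0$ or $||g||_{L^q(\Omega)}=0$, then $f=0$ or $g=0$ almost everywhere, so $fg=0$ almost everywhere and both sides of \eqref{hölder} vanish. Hence I may assume both norms are strictly positive (and, as noted, finite). I would then set, for almost every $\fatx\in\Omega$,
\[
a(\fatx) = \frac{|f(\fatx)|}{||f||_{L^p(\Omega)}}, \qquad b(\fatx) = \frac{|g(\fatx)|}{||g||_{L^q(\Omega)}},
\]
and apply Young's inequality \eqref{young} with exponents $p,q$ (which satisfy $1/p+1/q=1$) to obtain the pointwise bound
\[
\frac{|f(\fatx)g(\fatx)|}{||f||_{L^p(\Omega)}\,||g||_{L^q(\Omega)}} \leq \frac{|f(\fatx)|^p}{p\,||f||_{L^p(\Omega)}^p} + \frac{|g(\fatx)|^q}{q\,||g||_{L^q(\Omega)}^q}.
\]

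Finally I would integrate this inequality over $\Omega$. By the definition of the $L^p$- and $L^q$-norms, the right-hand side integrates to $\frac{1}{p}+\frac{1}{q}=1$; its finiteness shows in particular that $fg\in L^1(\Omega)$. The left-hand side integrates to $||fg||_{L^1(\Omega)}/\big(||f||_{L^p(\Omega)}||g||_{L^q(\Omega)}\big)$, so multiplying through by $||f||_{L^p(\Omega)}||g||_{L^q(\Omega)}$ delivers \eqref{hölder}. I do not anticipate a genuine obstacle: the crux is simply that Young's inequality \eqref{young} furnishes exactly the pointwise estimate whose integral collapses, via the conjugacy relation $1/p+1/q=1$, to the bound of unity. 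The only points demanding a little care are the separate handling of the endpoint $p=1$ (where \eqref{young} is unavailable) and the bookkeeping in the degenerate subcases.
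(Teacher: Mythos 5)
Your proposal is correct and follows essentially the same route as the paper's source: the paper defers the proof to \citep[Theorem 2.4]{Adams}, where H\"older's inequality is derived exactly as you do, by normalising $f$ and $g$ and applying Young's inequality \eqref{young} pointwise with the conjugate exponents $1/p+1/q=1$. Your separate treatment of the endpoint $p=1$ and of the degenerate cases $||f||_{L^p(\Omega)}=0$ or $||g||_{L^q(\Omega)}=0$ is exactly the bookkeeping the standard argument requires.
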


In the following results, the constants also depend on the geometry of $\Omega$. Since we consider $\Omega$ to be fixed, we suppress this dependence in the notation. All of the results remain valid
if we replace $\Omega$ with an arbitrary Lipschitz domain $U\subset\reals^d$. The corresponding constants will then depend on the geometry of $U$. The first of the aforementioned results is the trace theorem. For its proof, we refer to \citep[Satz 6.15]{Dobrowolski}.
\begin{theorem}[trace theorem]\label{thm_trace}
    Let $q\in[1,4]$ if $d=3$ and $q\in[1,\infty)$ if $d=2$. There exists a unique continuous linear operator $\tr:H^1(\Omega)\to L^q(\partial\Omega)$ satisfying $\tr[u]=u|_{\partial\Omega}$ for all $u\in H^1(\Omega)\cap C(\overline{\Omega})$. In particular, there exists a constant $C_{\tr}=C_{\tr}(q,d)$ such that
    \begin{align}
        ||\tr[u]||_{L^q(\partial\Omega)} \leq C_{\tr}\,||u||_{H^1(\Omega)} \label{trace_inequality}
    \end{align}
    for all $u\in H^1(\Omega)$. The operator $\mathrm{tr}$ is called $\mathrm{trace}$ $\mathrm{operator}$ or simply $\mathrm{trace}$.
\end{theorem}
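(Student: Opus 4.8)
The plan is to reduce the statement to an a priori inequality on a dense subspace and then invoke the bounded--linear--extension principle. Since $\Omega$ is a box, it is a Lipschitz domain and $H^1(\Omega)\cap C^1(\overline{\Omega})$ is dense in $H^1(\Omega)$ (by mollification combined with reflection across the flat faces). For $u\in H^1(\Omega)\cap C^1(\overline{\Omega})$ the classical restriction $u|_{\po}$ is well defined and the map $u\mapsto u|_{\po}$ is linear. Once I establish \eqref{trace_inequality} for such $u$, this map is a bounded linear operator from a dense subspace of $H^1(\Omega)$ into the Banach space $L^q(\po)$; it therefore extends uniquely to a continuous linear operator $\tr\colon H^1(\Omega)\to L^q(\po)$, and continuity together with density forces $\tr[u]=u|_{\po}$ whenever $u\in H^1(\Omega)\cap C(\overline{\Omega})$. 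Thus the whole theorem follows from the estimate on smooth functions, which is the content of the remaining steps.

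First I would prove the $L^1$--trace estimate
\[
\intpo |w|\,\dsx \le C\Big(||w||_{L^1(\Omega)} + ||\gradx w||_{L^1(\Omega)}\Big), \qquad w\in C^1(\overline{\Omega}).
\]
Because the faces of the box are axis aligned, the vector field $\Phi(\fatx)=\fatx-\fatx_0$, with $\fatx_0$ the centre of $\Omega$, satisfies $\Phi\cdot\fatn = L_k/2>0$ on each face orthogonal to $\fate_k$, so that $\Phi\cdot\fatn\ge c_0>0$ on all of $\po$. Applying the divergence theorem to $|w|\Phi$ and using $\divx\Phi=d$ together with $|\Phi|\le\operatorname{diam}(\Omega)$ yields the estimate, since $c_0\intpo|w|\,\dsx \le \intpo |w|(\Phi\cdot\fatn)\,\dsx = \into \divx(|w|\Phi)\,\dx = \into(\gradx|w|\cdot\Phi + |w|\,\divx\Phi)\,\dx$.

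The crucial step is to bootstrap this $L^1$ bound to the $L^q$ bound with the sharp exponent. I would apply the $L^1$--trace estimate to $w=|u|^q$ (for $q=1$ first regularising $t\mapsto|t|$ by $\sqrt{t^2+\varepsilon^2}$ and letting $\varepsilon\to0$), using the chain rule $|\gradx w|\le q\,|u|^{q-1}|\gradx u|$. This gives
\[
\intpo |u|^q\,\dsx \le C\Big(\into |u|^q\,\dx + q\into |u|^{q-1}|\gradx u|\,\dx\Big).
\]
Hölder's inequality \eqref{hölder} bounds the second integral by $q\,||u||_{L^{2(q-1)}(\Omega)}^{q-1}\,||\gradx u||_{L^2(\Omega)}$. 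Now I invoke the Sobolev inequality \eqref{sobolev}, i.e.\ $H^1(\Omega)\hookrightarrow L^{2^\ast}(\Omega)$ with $2^\ast=2d/(d-2)$ for $d=3$ and $2^\ast$ arbitrarily large for $d=2$. The requirement $2(q-1)\le 2^\ast$ is exactly $q\le 4$ when $d=3$ and $q<\infty$ when $d=2$ -- precisely the admissible range in the hypothesis. Under this condition $||u||_{L^{2(q-1)}(\Omega)}\le C||u||_{H^1(\Omega)}$ and likewise $||u||_{L^q(\Omega)}\le C||u||_{H^1(\Omega)}$, so both terms are controlled by $C||u||_{H^1(\Omega)}^q$, yielding \eqref{trace_inequality}.

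The main obstacle is this last matching of exponents: the borderline case $q=4$ in three dimensions relies on the endpoint Sobolev embedding $H^1\hookrightarrow L^6$, so the estimate is genuinely sharp and leaves no room to spare. By contrast, the $L^1$--trace step and the density/extension step are essentially routine, and the only delicate points there -- the chain rule for $|u|^q$ at $q=1$ and the density of $C^1(\overline{\Omega})$ in $H^1(\Omega)$ -- are handled by the standard approximation arguments indicated above.
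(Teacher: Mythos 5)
The paper does not actually prove this theorem: it is stated as a known result with the proof delegated to \citep[Satz 6.15]{Dobrowolski}, so any self-contained argument is by construction a different route. Your proof is correct and is the classical one for this setting: the vector field $\Phi(\fatx)=\fatx-\fatx_0$ indeed satisfies $\Phi\cdot\fatn=L_k/2>0$ on each face orthogonal to $\fate_k$, so the divergence theorem applied to $|w|\Phi$ yields the $L^1$ trace inequality with $c_0=\min_k L_k/2$, and the bootstrap $w=|u|^q$ combined with H\"older and the Sobolev embedding \eqref{sobolev} reproduces exactly the stated exponent range: $2(q-1)\leq 2^\ast=6$ is $q\leq 4$ for $d=3$, including the endpoint $q=4$ via $H^1\hookrightarrow L^6$, and $q<\infty$ for $d=2$. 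What the paper's citation buys is generality (arbitrary Lipschitz domains, no use of the flat-face structure); what your argument buys is a short, transparent proof exploiting the box geometry, together with an explicit explanation of why the admissible range of $q$ in the hypothesis is what it is. Two points are glossed and deserve a line in a polished write-up: (i) the divergence theorem is applied to $|w|\Phi$ with $|w|$ merely Lipschitz when $w\in C^1(\overline{\Omega})$, which is legitimate but should be justified, e.g.\ by the same $\sqrt{w^2+\varepsilon^2}$ regularisation you already use for $q=1$; (ii) the identification $\tr[u]=u|_{\po}$ for $u\in H^1(\Omega)\cap C(\overline{\Omega})$ does not follow from $H^1$-density alone --- you need an approximating sequence whose boundary restrictions converge to $u|_{\po}$ in $L^q(\po)$, which your mollification-plus-reflection construction does supply since it converges uniformly on $\overline{\Omega}$ for continuous $u$; stating this explicitly closes the argument. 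Neither point is a genuine gap, only a routine justification to be added.
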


Next, we turn to the Poincaré inequalities. For the proof of inequalities \eqref{mean_poincare} and \eqref{poincare_bdry} we refer to \citep[Chapter II, Section 1.4]{Temam}. Inequality \eqref{poincare} is an immediate consequence of inequality \eqref{poincare_bdry}.
\begin{theorem}[Poincaré inequalities]~
    \begin{enumerate}
        \item[$\mathrm{(i)}$]{There exists a constant $C_P^{(1)}=C_P^{(1)}(d)$ such that for any $\mathcal{U}\in H^1(\Omega)\cup H^1(\Omega)^d$ 
        \begin{align}
            ||\mathcal{U}||^2_{H^1(\Omega)} \leq C_P^{(1)}\left(||\gradx \mathcal{U}||^2_{L^2(\Omega)} + \left|\int_\Omega \mathcal{U}\;\dx\right|^2\right).\label{mean_poincare}
        \end{align}
        } 
        \item[$\mathrm{(ii)}$]{There exists a constant $C_P^{(2)}=C_P^{(2)}(d)$ such that for any $\mathcal{V}\in H^1_{0,\mathrm{per}}(\Omega)\cup H^1_{0,\mathrm{per}}(\Omega)^d$ 
        \begin{align}
            ||\mathcal{V}||_{H^1(\Omega)} \leq C_P^{(2)}\,||\gradx \mathcal{V}||_{L^2(\Omega)}.\label{poincare}
        \end{align}
        }
        \item[$\mathrm{(iii)}$]{There exists a constant $C_P^{(3)}=C_P^{(3)}(d)$ such that for any $\mathcal{W}\in H^1(\Omega)\cup H^1(\Omega)^d$ 
        \begin{align}
            ||\mathcal{W}||^2_{H^1(\Omega)} \leq C_P^{(3)}\Big(||\gradx \mathcal{W}||^2_{L^2(\Omega)} + ||\mathrm{tr}[\mathcal{W}]||^2_{L^2(\partial\Omega)}\Big).\label{poincare_bdry}
        \end{align}
        }
    \end{enumerate}
\end{theorem}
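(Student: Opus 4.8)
The plan is to treat the three inequalities separately: I would obtain \eqref{mean_poincare} and \eqref{poincare_bdry} by a compactness argument and then read off \eqref{poincare} from \eqref{poincare_bdry}. All three are instances of the same principle, namely that on $H^1(\Omega)$ a functional controlling the gradient together with a zeroth-order quantity that is \emph{not} annihilated by the constants (the mean, respectively the boundary trace) defines a norm equivalent to $||\cdot||_{H^1(\Omega)}$. The proofs are standard; the only genuine work is to make the compactness step and the passage from \eqref{poincare_bdry} to \eqref{poincare} precise.

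For \eqref{poincare_bdry}, which I regard as the crux, I would argue by contradiction using the compact embedding $H^1(\Omega)\hookrightarrow L^2(\Omega)$ (Rellich–Kondrachov). Suppose no constant $C_P^{(3)}$ works; then there is a sequence $\{w_n\}\subset H^1(\Omega)$ with $||w_n||_{H^1(\Omega)}=1$ but $||\gradx w_n||^2_{L^2(\Omega)} + ||\tr[w_n]||^2_{L^2(\po)}\to 0$. A subsequence converges in $L^2(\Omega)$ to some $w$; since in addition $\gradx w_n\to\fatzero$ in $L^2(\Omega)$, closedness of the weak gradient gives $\gradx w=\fatzero$, so the convergence is in fact in $H^1(\Omega)$ and $w$ is constant on the connected box $\Omega$ (componentwise in the vector-valued case). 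Continuity of the trace (Theorem \ref{thm_trace}) yields $\tr[w_n]\to\tr[w]$ in $L^2(\po)$, whence $\tr[w]=0$; a nonzero constant has nonzero trace on the nonempty set $\po$, so $w=0$, contradicting $||w||_{H^1(\Omega)}=\lim_n||w_n||_{H^1(\Omega)}=1$. The same scheme, with $||\tr[\cdot]||^2_{L^2(\po)}$ replaced by the squared mean $\big(\into\mathcal U\,\dx\big)^2$, proves \eqref{mean_poincare}; the only change is that the limit $w$ is ruled out by having zero mean rather than zero trace.

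Granting \eqref{poincare_bdry}, the reduction to \eqref{poincare} is where I expect the only real subtlety. For $\mathcal{V}\in H^1_{0,\mathrm{per}}(\Omega)$ the trace on $\Gamma$ vanishes by definition, and if one reads \eqref{poincare_bdry} over the periodic domain — where the lateral faces $\Gamma_j$ are identified in pairs so that the genuine boundary is $\Gamma$ — the trace term drops out and \eqref{poincare} follows with $C_P^{(2)}=\sqrt{C_P^{(3)}}$. The obstacle is exactly that if one insists on the full boundary $\po$, then the lateral contributions $||\tr[\mathcal{V}]||_{L^2(\Gamma_j)}$ survive, and bounding them through Theorem \ref{thm_trace} would be circular. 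To keep the argument self-contained I would instead prove \eqref{poincare} directly by a slice-wise Friedrichs estimate: since $\mathcal{V}(\fatx',x_d)=\int_0^{x_d}\partial_{x_d}\mathcal{V}(\fatx',t)\,\mathrm{d}t$ by the vanishing trace on $\{x_d=0\}$, Cauchy–Schwarz gives $|\mathcal{V}(\fatx',x_d)|^2\leq L_d\int_0^{L_d}|\partial_{x_d}\mathcal{V}(\fatx',t)|^2\,\mathrm{d}t$, and integration over $\Omega$ yields $||\mathcal{V}||^2_{L^2(\Omega)}\leq L_d^2\,||\gradx\mathcal{V}||^2_{L^2(\Omega)}$, so that \eqref{poincare} holds with $C_P^{(2)}=\sqrt{1+L_d^2}$. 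This estimate uses only the vanishing on $\Gamma$ and is insensitive to the periodic faces, thereby sidestepping the obstacle entirely.
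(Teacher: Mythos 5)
Your proposal is correct, but it takes a genuinely different route from the paper: the paper does not prove \eqref{mean_poincare} and \eqref{poincare_bdry} at all (it cites Temam, Chapter II, Section 1.4) and simply asserts that \eqref{poincare} is an \emph{immediate} consequence of \eqref{poincare_bdry}, whereas you give a self-contained compactness proof of (i) and (iii) and a direct Friedrichs slice estimate for (ii). Your compactness argument is the standard one and is sound, including the closedness of the weak gradient and the componentwise treatment of the vector-valued case; it yields non-constructive constants, which is consistent with the paper's convention of suppressing the dependence on the fixed domain $\Omega$. More interestingly, your scepticism about the reduction from \eqref{poincare_bdry} to \eqref{poincare} is well founded as a literal reading: for $\mathcal{V}\in H^1_{0,\mathrm{per}}(\Omega)$ only the trace on $\Gamma$ vanishes, while \eqref{poincare_bdry} as stated carries $||\tr[\mathcal{W}]||_{L^2(\po)}$ over the \emph{full} boundary, and the periodic traces on the lateral faces $\Gamma_j$ do not vanish, so the trace term does not simply drop. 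The paper's one-line deduction is really invoking the sharper, standard form of the generalized Poincar\'e inequality in which the trace is taken only over a boundary portion of positive surface measure (here $\Gamma$) --- a form which Temam's result covers and which your own compactness argument proves verbatim if you replace $||\tr[w_n]||_{L^2(\po)}$ by $||\tr[w_n]||_{L^2(\Gamma)}$, since a nonzero constant already has nonzero trace on $\Gamma$. Your alternative, bypassing (iii) entirely via $\mathcal{V}(\fatx',x_d)=\int_0^{x_d}\partial_{x_d}\mathcal{V}(\fatx',t)\,\mathrm{d}t$ and Cauchy--Schwarz, is equally valid (the vanishing trace on $\{x_d=0\}$ justifies the fundamental-theorem representation along a.e.\ line, by density of smooth functions vanishing near $\{x_d=0\}$ or by the absolute-continuity-on-lines characterization of $H^1$), and it buys something the paper's route does not: the explicit constant $C_P^{(2)}=\sqrt{1+L_d^2}$, obtained without any compactness. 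In short, the paper buys brevity by citation; you buy self-containedness, an explicit constant in (ii), and a correct diagnosis of why the paper's ``immediate'' step needs the $\Gamma$-version of \eqref{poincare_bdry} rather than the $\po$-version as printed.
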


We further need the following Sobolev inequalities. For inequality \eqref{sobolev} we refer to \citep[Theorem 4.12, Part I]{Adams}. Inequality \eqref{sobolev_bdry} is a consequence of inequality \eqref{sobolev} and the definition of the spaces $L^q(\Gamma)$, $H^1(\Gamma)$.
\begin{theorem}[Sobolev's inequality]
    Let $p,q$ be numbers such that
    \begin{align*}
        p\in\left\{
        \begin{array}{cl}
            [1,\infty) & \text{if $d=2$,} \\[2mm]
            [1, 6] & \text{if $d=3$,}  
        \end{array}\right.
        \qquad q\in[1,\infty).
    \end{align*}
    \begin{enumerate}
        \item[$(\mathrm{i})$]{There exists a constant $C_S^{(1)}=C_S^{(1)}(p,d)$ such that for any $\mathcal{U}\in H^1(\Omega)\cup H^1(\Omega)^d$
        \begin{align}
            ||\mathcal{U}||_{L^p(\Omega)}\leq C_S^{(1)}\,||\mathcal{U}||_{H^1(\Omega)}.\label{sobolev}
        \end{align}
        }
        \item[$(\mathrm{ii})$]{There exists a constant $C_S^{(2)}=C_S^{(2)}(q,d)$ such that for any $\mathcal{V}\in H^1(\Gamma)\cup H^1(\Gamma)^d$
        \begin{align}
            ||\mathcal{V}||_{L^q(\Gamma)}\leq C_S^{(2)}\,||\mathcal{V}||_{H^1(\Gamma)}.\label{sobolev_bdry}
        \end{align}
        }
    \end{enumerate}
    
\end{theorem}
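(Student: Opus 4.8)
The plan is to establish part (i) by reducing to the whole-space Gagliardo--Nirenberg--Sobolev inequality and then to obtain part (ii) by applying part (i) on the flat boundary faces. For part (i), the starting point is the elementary $W^{1,1}$ estimate on $\reals^d$: for $u\in C_c^\infty(\reals^d)$ one writes each $|u(\fatx)|$ as an integral of the corresponding partial derivative along each coordinate line, multiplies these $d$ representations together, and integrates one variable at a time using the generalized H\"older inequality to reach $\|u\|_{L^{d/(d-1)}(\reals^d)}\le C\|\gradx u\|_{L^1(\reals^d)}$. Applying this to $|u|^\gamma$ and invoking H\"older's inequality \eqref{hölder} bootstraps it to the scaling-critical estimate: for $d=3$ the choice $\gamma=4$ yields $\|u\|_{L^6(\reals^d)}\le C\|\gradx u\|_{L^2(\reals^d)}$, which is exactly the endpoint $p=2^*=6$; for $d=2$ the exponent $2=d$ is borderline, and the same bootstrap instead produces $\|u\|_{L^p(\reals^d)}\le C(p)\|u\|_{H^1(\reals^d)}$ for every finite $p$, with $C(p)$ growing as $p\to\infty$. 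This covers the full stated range in both dimensions, and exponents below the endpoint are recovered on the bounded domain by \eqref{hölder}.

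The transfer to the box $\Omega=\prod_k(0,L_k)$ proceeds through a bounded linear extension operator $E\colon H^1(\Omega)\to H^1(\reals^d)$, which exists because $\Omega$ is a Lipschitz (indeed polyhedral) domain and satisfies $\|Eu\|_{H^1(\reals^d)}\le C\|u\|_{H^1(\Omega)}$ (see \citep{Adams}). Approximating $u$ by smooth functions, applying the whole-space inequality to $Eu$, and restricting back to $\Omega$ gives $\|u\|_{L^p(\Omega)}\le C_S^{(1)}\|u\|_{H^1(\Omega)}$, with the full $H^1$-norm appearing on the right because the extension controls it. The vector-valued case $H^1(\Omega)^d$ follows by applying the scalar estimate componentwise and summing.

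For part (ii), the definitions in the notation section identify $H^1(\Gamma)$ with the functions whose restrictions to each of the two flat faces $(\bm{0}',\bm{L}')\times\{0\}$ and $(\bm{0}',\bm{L}')\times\{L_d\}$ lie in $H^1((\bm{0}',\bm{L}'))$, a box of dimension $d-1$. When $d=3$ each face is two-dimensional, so part (i) applied with $d$ replaced by $2$ gives $H^1((\bm{0}',\bm{L}'))\hookrightarrow L^q((\bm{0}',\bm{L}'))$ for every $q\in[1,\infty)$; when $d=2$ each face is a one-dimensional interval, and the one-dimensional embedding $H^1(I)\hookrightarrow C(\overline{I})$ covers all $q\in[1,\infty)$ as well. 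Summing the two face estimates and comparing with the explicit formulas for $\|\mathcal{V}\|_{L^q(\Gamma)}$ and $\|\mathcal{V}\|_{H^1(\Gamma)}$ recorded in the notation section yields $\|\mathcal{V}\|_{L^q(\Gamma)}\le C_S^{(2)}\|\mathcal{V}\|_{H^1(\Gamma)}$, with the vector-valued case again handled componentwise.

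The main obstacle will be the sharp treatment of the endpoint exponent. For $d=3$ the value $p=6$ is precisely the critical Sobolev exponent, so it is attained only through the optimal Gagliardo--Nirenberg--Sobolev bootstrap and cannot be reached by soft interpolation from subcritical estimates; for $d=2$ one must accept that the constant $C(p)$ degenerates as $p\to\infty$, reflecting the genuine failure of the embedding into $L^\infty$. Once the endpoint case is secured, the extension argument for part (i) and the face-by-face reduction for part (ii) are both routine.
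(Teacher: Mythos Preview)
Your proposal is correct and aligns with the paper's approach. The paper does not give an independent proof of part~(i) but simply refers to \cite[Theorem~4.12, Part~I]{Adams}; your Gagliardo--Nirenberg--Sobolev bootstrap plus extension operator is precisely the standard route behind that citation. For part~(ii) the paper states only that it ``is a consequence of inequality~\eqref{sobolev} and the definition of the spaces $L^q(\Gamma)$, $H^1(\Gamma)$'', which is exactly your face-by-face reduction to the $(d-1)$-dimensional box~$(\bm{0}',\bm{L}')$.
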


\begin{theorem}[Korn's inequality]
    The following inequality holds for all $\fatu\in H^1_{0,\mathrm{per}}(\Omega)^d$: 
    \begin{align}
        ||\gradx\fatu||^2_{L^2(\Omega)} \leq 2\,||\fatD{\fatu}||^2_{L^2(\Omega)}. \label{korn}
    \end{align}
\end{theorem}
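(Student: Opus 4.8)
The plan is to reduce \eqref{korn} to a pointwise algebraic identity combined with an integration-by-parts argument. Writing the symmetric gradient componentwise as $\fatD{\fatu}_{ij}=\tfrac12(\partial_i u_j+\partial_j u_i)$, squaring and summing over $i,j$, a short computation yields the pointwise identity
\[
    ||\fatD{\fatu}||_2^2 = \frac12\,||\gradx\fatu||_2^2 + \frac12\sum_{i,j\,=\,1}^{d}\partial_i u_j\,\partial_j u_i,
\]
valid almost everywhere for any $\fatu\in H^1(\Omega)^d$ (both relabelled sums $\sum_{i,j}(\partial_i u_j)^2$ and $\sum_{i,j}(\partial_j u_i)^2$ equal $||\gradx\fatu||_2^2$). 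Integrating over $\Omega$, inequality \eqref{korn} follows at once provided the cross term $\sum_{i,j}\langle\partial_i u_j,\partial_j u_i\rangle_\Omega$ is nonnegative.

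To control this cross term, I would integrate by parts twice. For a fixed pair $(i,j)$, moving $\partial_i$ off $u_j$ and then $\partial_j$ off $u_i$ transforms $\langle\partial_i u_j,\partial_j u_i\rangle_\Omega$ into $\langle\partial_i u_i,\partial_j u_j\rangle_\Omega$ plus two boundary integrals over $\partial\Omega$. Summing over $i,j$, the interior part collapses to $||\divx\fatu||_{L^2(\Omega)}^2\geq 0$, which is exactly the sign we need; hence the entire argument reduces to showing that the boundary integrals vanish.

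These boundary integrals, namely $\sum_{i,j}\int_{\partial\Omega}u_j\,(\partial_j u_i)\,n_i\,\dsx$ and $\sum_{i,j}\int_{\partial\Omega}u_j\,(\partial_i u_i)\,n_j\,\dsx$, I would treat by splitting $\partial\Omega$ into the faces $\Gamma$ (where $x_d\in\{0,L_d\}$) and the lateral faces in the periodic directions $j\in\{1,\dots,d-1\}$. On $\Gamma$ each integrand carries the factor $u_j=\mathrm{tr}[u_j]=0$ by the no-slip condition encoded in $H^1_{0,\mathrm{per}}(\Omega)$, so these contributions vanish. On each opposing pair of lateral faces $\{x_j=0\}$ and $\{x_j=L_j\}$, periodicity forces the traces of $\fatu$ and of its derivatives to coincide, while the outward normals $-\fate_j$ and $+\fate_j$ are opposite, so the two face integrals cancel. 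Both defining properties of $H^1_{0,\mathrm{per}}(\Omega)$ thus enter essentially.

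The main obstacle is regularity: the double integration by parts passes through a term of the form $\langle u_j,\partial_i\partial_j u_i\rangle_\Omega$, which is only meaningful for $\fatu\in H^2$, and the boundary integrals above involve traces of first derivatives that need not exist for a generic $H^1$ field. I would therefore first establish the identity $\sum_{i,j}\langle\partial_i u_j,\partial_j u_i\rangle_\Omega=||\divx\fatu||_{L^2(\Omega)}^2$ for smooth vector fields that are periodic in the lateral directions and vanish on $\Gamma$, for which all traces and boundary integrals are classically defined, and then pass to the limit, using that such fields are dense in $H^1_{0,\mathrm{per}}(\Omega)^d$ and that both sides depend continuously on $\fatu$ in the $H^1$-norm. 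Combining the limiting identity with the pointwise decomposition gives $||\fatD{\fatu}||_{L^2(\Omega)}^2=\tfrac12\,||\gradx\fatu||_{L^2(\Omega)}^2+\tfrac12\,||\divx\fatu||_{L^2(\Omega)}^2\geq\tfrac12\,||\gradx\fatu||_{L^2(\Omega)}^2$, which is precisely \eqref{korn}.
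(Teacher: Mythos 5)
Your proposal is correct and takes essentially the same route as the paper's own two-step proof: expand $||\fatD{\fatu}||^2_{L^2(\Omega)}$ pointwise, integrate the cross term by parts twice so that it collapses to $||\divx(\fatu)||^2_{L^2(\Omega)}\geq 0$, and justify the formal computation by proving the identity for a dense class of smooth fields (periodic, together with their gradients, in the lateral directions) and passing to the $H^1$-limit. The only minor difference is how the boundary terms on $\Gamma$ are killed: you make the smooth approximants vanish on $\Gamma$, whereas the paper's extension-and-mollification construction makes them periodic in \emph{all} $d$ directions (the zero trace on $\Gamma$ is what permits the periodic gluing across $x_d$), so that the $\Gamma$-terms cancel between opposite faces just like the lateral ones — but the algebra and the final identity $2\,||\fatD{\fatu}||^2_{L^2(\Omega)} = ||\gradx\fatu||^2_{L^2(\Omega)} + ||\divx(\fatu)||^2_{L^2(\Omega)}$ are identical.
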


\begin{proof}
    We only sketch the proof. \\[2mm]
    \textbf{Step 1.} We first observe that for any $\fatu\in H^1_{0,\mathrm{per}}(\Omega)^d$ there exists a sequence $(\fatu_m)_{m\,\in\,\naturals}\subset C^\infty(\overline{\Omega})^d\cap H^1(\Omega)^d$ satisfying 
    \begin{align}
        ||\fatu_m-\fatu||_{H^1(\Omega)} \xlongrightarrow{\,m\,\to\,\infty\,} 0 \qquad \text{and} \qquad (\fatu_m,\gradx\fatu_m)(\fatx) = (\fatu_m,\gradx\fatu_m)(\fatx+L_j\fate_j) \notag 
    \end{align}
    for all $\fatx\in \partial\Omega\cap \{x_j=0\}$, $j\in\{1,\dots,d-1\}$, and all $m\in\naturals$. To see this, we define the function $\fatv\in L^2(\reals^d)^d$ via
    \begin{align}
        \fatv(\fatx) = \left\{\begin{array}{cl}
            \fatu\!\left(x_1-\left\lfloor\dfrac{x_1}{L_1}\right\rfloor L_1, \dots, x_d-\left\lfloor\dfrac{x_d}{L_d}\right\rfloor L_d\right) & \text{if $\fatx\in(-\bm{L},2\bm{L})\backslash \partial\Omega$,} \\[4mm]
            \bm{0} & \text{else.} 
        \end{array}
        \right. \notag
    \end{align}
    Clearly, $\fatv\in H^1((z_1L_1, (z_1+1)L_1\times\dots\times(z_dL_d, (z_d+1)L_d))^d$ for all $\fatz\in\{-1,0,1\}^d$. Using Green's formula for Sobolev functions (cf. \citep[Theorem 3.1.1]{Necas}) we deduce that the obvious (square integrable) candidates are indeed weak derivatives of $\fatv$ in $(-\bm{L},2\bm{L})$ whence $\fatv\in H^1((-\bm{L},2\bm{L}))^d$. Since $\fatv$ is an extension of $\fatu$, the desired sequence can be obtained by mollification of $\fatv$ (cf. \citep[Lemma 3.16]{Adams}). \\[2mm]
    \textbf{Step 2.} Using the properties of the sequence $(\fatu_m)_{m\,\in\,\naturals}$ and Green's formula, we deduce that
    \begin{align}
        2\,||\fatD{\fatu_m}||^2_{L^2(\Omega)} &= ||\gradx\fatu_m||^2_{L^2(\Omega)} + \sum_{k,\ell\,=\,1}\int_\Omega \frac{\partial u_{m,k}}{\partial x_\ell}\,\frac{\partial u_{m,\ell}}{\partial x_k}\;\dx \notag \\[2mm]
        &= ||\gradx\fatu_m||^2_{L^2(\Omega)} - \sum_{k,\ell\,=\,1}\int_\Omega \frac{\partial^2 u_{m,k}}{\partial x_k \partial x_\ell}\,u_{m,\ell}\;\dx \notag \\[2mm]
        &= ||\gradx\fatu_m||^2_{L^2(\Omega)} + \sum_{k,\ell\,=\,1}\int_\Omega \frac{\partial u_{m,k}}{\partial x_k}\,\frac{\partial u_{m,\ell}}{\partial x_\ell}\;\dx \geq ||\gradx\fatu_m||^2_{L^2(\Omega)} \notag 
    \end{align}
    for all $m\in\naturals$. Sending $m\to\infty$, we obtain \eqref{korn}.
\end{proof}

\section{Triangulations and inf-sup stability of \texorpdfstring{$P_2-P_1$}{P2-P1} finite elements}
\label{sec_inf-sup}
We define admissible triangulations of $\Omega$.
\begin{definition}\label{def_triangulation}~
    \begin{enumerate}
        \item[(a)]{A finite set $\mathcal{T}$ of closed $d$-simplices is called an \textit{admissible triangulation of $\Omega$} if it has the following properties:
        \begin{itemize}
            \item[(1)]{$\overline{\Omega} = \bigcup_{K\,\in\,\mathcal{T}} K$.
            } 
            \item[(2)]{If $K,L\in\mathcal{T}$, then the intersections $K\cap L$ and $\{(L_j\fate_j+K)\cap L\}_{j\,\in\,\{1,\dots,d-1\}}$ are either empty, a common vertex, a common edge or a common face.
            }
            \item[(3)]{If $d=2$, every $K\in\mathcal{T}$ has at most one edge on $\Gamma$. If $d=3$, every $K\in\mathcal{T}$ has at least three edges that are not contained in $\Gamma$.
            }
        \end{itemize}
        The number $h=\max\{h_K\,|\,K\in\mathcal{T}\}$ characterizing the mesh size is called \textit{mesh parameter}. 
        }
        \item[(b)]{A family $\{\mathcal{T}_i\}_{i\,\in\,I}$ of admissible triangulations of $\Omega$ is called \textit{uniform} if there exists a number $\varepsilon>0$ such that $\varepsilon h_i \leq r_K\leq h_K \leq h_i$, whenever $K\in\mathcal{T}_i$ for some $i\in I$. Here, $r_K$ is the radius of the largest ball contained in $K$, $h_K = \mathrm{diam}(K)$ and $h_i=\max\{h_K\,|\,K\in\mathcal{T}_i\}$. 
        }
    \end{enumerate}
\end{definition}
\begin{remark}
    Often, families $\{\mathcal{T}_i\}_{i\,\in\,I}$ of triangulations are chosen in such a way that their members are uniquely characterized by their mesh parameter. Such families are typically indexed by their mesh parameters. 
\end{remark}
The inf-sup stability of $P_2-P_1$ finite elements is proven in \citep[Chapter VI.6]{Brezzi}.
\begin{theorem}[Inf-sup stability of $P_2-P_1$ finite elements]\label{thm_inf-sup}
    Let $\{\mathcal{T}_h\}_{h\,\in\,(0,H]}$ be a uniform family of admissible triangulations of $\Omega$. Then there exists a constant $\beta>0$ such that 
    \begin{align}
        \sup_{\fatv\,\in\,\mathcal{V}_h^d\backslash\{\bm{0}\}}\left\{\int_\Omega\frac{\divx(\fatv)\,q}{||\fatv||_{H^1(\Omega)}}\;\dx \right\}\geq \beta\left|\left|q-\frac{1}{|\Omega|}\int_\Omega q\;\dx\right|\right|_{L^2(\Omega)}, \label{inf-sup}
    \end{align}
    whenever $q\in \mathcal{Q}_h$ for some $h\in(0,H]$.
\end{theorem}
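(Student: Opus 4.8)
The plan is to derive the discrete condition \eqref{inf-sup} from the continuous inf-sup condition by exhibiting a uniformly bounded Fortin operator. First I would record a reduction: every $\fatv\in\mathcal{V}_h^d$ vanishes on $\Gamma$ and is periodic across the lateral faces, so the divergence theorem gives $\into\divx(\fatv)\,\dx=0$ and hence $\into\divx(\fatv)\,q\,\dx=\into\divx(\fatv)\,(q-\tfrac{1}{|\Omega|}\into q\,\dx)\,\dx$ for every $\fatv\in\mathcal{V}_h^d$. Consequently it suffices to prove \eqref{inf-sup} for pressures of zero mean, and I may replace any $q_h\in\mathcal{Q}_h$ by its mean-zero part. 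I would then isolate the two classical ingredients. The continuous inf-sup condition states that for each $q\in L^2(\Omega)$ with $\into q\,\dx=0$ there is $\fatv\in H^1_{0,\mathrm{per}}(\Omega)^d$ with $\divx(\fatv)=q$ and $||\fatv||_{H^1(\Omega)}\leq C_\Omega\,||q||_{L^2(\Omega)}$; this is the surjectivity of the divergence onto mean-zero functions (a Bogovskii/de Rham type result), which holds on the box $\Omega$ with our mixed no-slip/periodic boundary conditions.

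The second ingredient, and the crux, is a linear Fortin operator $\Pi_h:H^1_{0,\mathrm{per}}(\Omega)^d\to\mathcal{V}_h^d$ that is uniformly stable, $||\Pi_h\fatv||_{H^1(\Omega)}\leq C_F\,||\fatv||_{H^1(\Omega)}$ with $C_F$ independent of $h$, and that preserves the divergence tested against $\mathcal{Q}_h$, i.e. $\into\divx(\Pi_h\fatv-\fatv)\,q_h\,\dx=0$ for all $q_h\in\mathcal{Q}_h$. Granting such a $\Pi_h$, the result follows: for a mean-zero $q_h\in\mathcal{Q}_h$ I take the continuous lift $\fatv$ with $\divx(\fatv)=q_h$ and estimate
\begin{align*}
    \sup_{\fatw\,\in\,\mathcal{V}_h^d\backslash\{\bm{0}\}}\frac{\into\divx(\fatw)\,q_h\,\dx}{||\fatw||_{H^1(\Omega)}} &\geq \frac{\into\divx(\Pi_h\fatv)\,q_h\,\dx}{||\Pi_h\fatv||_{H^1(\Omega)}} = \frac{\into\divx(\fatv)\,q_h\,\dx}{||\Pi_h\fatv||_{H^1(\Omega)}} \\[1mm]
    &= \frac{||q_h||^2_{L^2(\Omega)}}{||\Pi_h\fatv||_{H^1(\Omega)}} \geq \frac{||q_h||_{L^2(\Omega)}}{C_\Omega C_F}\,,
\end{align*}
so that \eqref{inf-sup} holds with $\beta=1/(C_\Omega C_F)$.

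The hard part will be constructing $\Pi_h$ and proving its $h$-independent stability. I would take $\Pi_h=\Pi_h^{\mathrm{SZ}}+\Pi_h^{\mathrm{bub}}$, where $\Pi_h^{\mathrm{SZ}}$ is a Scott-Zhang (or Clément) quasi-interpolant into the continuous $P_2$ velocity space --- $H^1$-stable with constants depending only on the shape-regularity furnished by the uniformity of $\{\mathcal{T}_h\}_{h\,\in\,(0,H]}$ (Definition \ref{def_triangulation}(b)) --- and $\Pi_h^{\mathrm{bub}}$ is a correction built from quadratic edge bubbles ($d=2$) or face bubbles ($d=3$). Since $\gradx q_h$ is elementwise constant, integrating by parts reduces the divergence-matching identity to prescribing, on each interior edge/face $F$, the flux $\int_F(\Pi_h\fatv\cdot\fatn)\,\dsx$ to equal $\int_F(\fatv\cdot\fatn)\,\dsx$; the quadratic bubbles supply exactly enough degrees of freedom to enforce this, and the standard local scaling estimates keep the correction uniformly $H^1$-stable. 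This is precisely where the geometric hypotheses (3) of Definition \ref{def_triangulation} are needed: they guarantee that each element has enough edges/faces off $\Gamma$ (where the homogeneous Dirichlet condition would kill the bubbles) for the elementwise flux-correction systems to be solvable with $h$-uniform bounds. An alternative would be Stenberg's macroelement technique, verifying a local inf-sup condition on each macroelement and patching the constants; either way the only delicate point is the uniformity in $h$, which rests on shape-regularity together with condition (3), and for the remaining routine estimates I would refer to \citep[Chapter VI.6]{Brezzi}.
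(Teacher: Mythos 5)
The overall architecture you propose (continuous inf-sup condition plus a uniformly bounded Fortin operator) is sound, and your preliminary reduction to mean-zero pressures via the divergence theorem is correct. But the step you yourself identify as the crux --- the bubble correction $\Pi_h^{\mathrm{bub}}$ --- fails for this particular element pair, on two counts. First, the reduction via integration by parts is wrong for a \emph{continuous} $P_1$ pressure: since $q_h\in\mathcal{Q}_h\subset H^1_{\mathrm{per}}(\Omega)$ and $\fatv\in H^1_{0,\mathrm{per}}(\Omega)^d$, one has $\into\divx(\fatv)\,q_h\,\dx=-\into\fatv\cdot\gradx q_h\,\dx$ with $\gradx q_h$ piecewise constant, so the divergence-matching identity is equivalent to matching the \emph{element volume averages} $\int_K(\Pi_h\fatv-\fatv)\,\dx$ against $\gradx q_h|_K$, not the mean normal fluxes $\int_F(\Pi_h\fatv-\fatv)\cdot\fatn\,\dsx$ across faces. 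Matching mean face fluxes only controls $\int_K\divx(\Pi_h\fatv-\fatv)\,\dx$, i.e.\ testing against \emph{discontinuous} piecewise-constant pressures; if you instead integrate by parts elementwise, the face terms carry the linear weight $q_h$, so you would need first moments of the flux as well. Second, even the corrected requirement cannot be enforced inside $\mathcal{V}_h$: the element bubble needed to adjust $\int_K\fatv\,\dx$ is $\lambda_1\lambda_2\lambda_3$ (cubic) for $d=2$ and $\lambda_1\lambda_2\lambda_3\lambda_4$ (quartic) for $d=3$, and a face bubble on a tetrahedron is $\lambda_i\lambda_j\lambda_k$, also cubic --- none of these lies in the $P_2$ velocity space, contrary to your claim that quadratic edge/face bubbles ``supply exactly enough degrees of freedom.'' This is precisely why Taylor--Hood is known \emph{not} to admit the naive flux-correcting local Fortin operator that works for $P_2$--$P_0$, Bernardi--Raugel or Crouzeix--Raviart pairs; constructing a genuine Fortin operator for Taylor--Hood is a delicate matter and not the standard route.

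The classical proofs --- including the one in \citep[Chapter VI.6]{Brezzi}, which is all the paper itself invokes (the theorem is established there by citation, with no argument in the text) --- proceed differently: either via Verf\"urth's trick, where one uses tangential quadratic \emph{edge} bubbles $\lambda_i\lambda_j\,\bm{t}_e$ (which do lie in $\mathcal{V}_h$) to prove the weak bound $\sup_{\fatv_h\,\in\,\mathcal{V}_h^d\backslash\{\bm{0}\}}\into\divx(\fatv_h)\,q_h\,\dx\,/\,||\fatv_h||_{H^1(\Omega)}\gtrsim ||h\gradx q_h||_{L^2(\Omega)}$ and then combines this with the continuous inf-sup condition and a Cl\'ement/Scott--Zhang interpolant whose approximation error is controlled by $h$; or via Stenberg's macroelement technique. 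Condition (3) of Definition \ref{def_triangulation} enters in the former argument: it guarantees each element has enough \emph{interior edges} so that the piecewise-constant $\gradx q_h$ can be sensed in $d$ independent tangential directions --- not, as you suggest, to make elementwise flux-correction systems solvable. Your fallback sentence mentioning Stenberg's technique points to a viable repair, but as written the main line of your argument breaks at the bubble construction; replacing the face-flux Fortin correction by the Verf\"urth-trick edge-bubble argument (with attention to the periodic identifications in $H^1_{0,\mathrm{per}}$, which your mean-zero reduction already handles correctly) yields a complete proof.
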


\section{The properties of the map \texorpdfstring{$\mathcal{S}_h$}{Sh}}\label{sec_sh}
Before we turn to the proof of the properties of the map $\mathcal{S}_h$ appearing in Section \ref{sec_proof}, we recall two classical results. The first of them is the equivalence of norms in finite-dimensional vector spaces; see \citep[Satz 2.5]{Dobrowolski}.
\begin{defandthm}\label{thm_equivalence_of_norms}
    Let $X$ be a vector space. Two norms $||\cdot||_1, ||\cdot||_2$ on $X$ are called $\mathrm{equivalent}$ if there exist constants $0<m_*\leq m^*$ such that
    \begin{align}
        m_*||x||_1 \leq ||x||_2 \leq m^*||x||_1 \qquad \text{for all $x\in X$.} \notag
    \end{align}
    If $X$ is finite-dimensional, then all norms on $X$ are equivalent.
\end{defandthm}

The second result is the Riesz representation theorem; see \citep[Satz 2.25]{Dobrowolski}.
\begin{theorem}[Riesz representation theorem]\label{riesz}
    Let $(H,\langle\cdot,\cdot\rangle_H)$ be a (real) Hilbert space. For any continuous linear map $f:H\to\reals$ there exists a unique element $x\in H$ such that
    \begin{align*}
        f(y) = \langle x, y\rangle_H \qquad \text{for all $y\in H$.}
    \end{align*}
    In particular, 
    \begin{align*}
        ||x||_H = \sup_{y\,\in\,H}\left\{\frac{|f(y)|}{||y||_H}\right\}.
    \end{align*}
\end{theorem}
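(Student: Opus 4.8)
The plan is to give the classical Hilbert-space argument whose crux is the orthogonal projection theorem. First I would dispose of the trivial case: if $f\equiv 0$ on $H$, then $x=0$ represents $f$ and the norm identity holds trivially. So assume $f$ is not identically zero.

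The key structural fact is that the kernel $N=\ker f=\{y\in H\,|\,f(y)=0\}$ is a closed linear subspace (closed because $f$ is continuous) that is proper (because $f\not\equiv 0$). The main work lies in showing that such a proper closed subspace admits a nonzero orthogonal complement, and this is precisely where completeness of $H$ is indispensable. I would invoke the projection theorem: for the closed subspace $N$ and any fixed $z_0\notin N$, the infimum $\inf_{y\in N}||z_0-y||_H$ is attained at a unique $y_0\in N$, and the residual $z=z_0-y_0$ is nonzero and orthogonal to $N$. Existence of the minimizer is obtained by taking a minimizing sequence and using the parallelogram law to show it is Cauchy, then passing to the limit via completeness; the orthogonality $z\perp N$ comes from a first-order optimality computation.

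With $z\in N^{\perp}\setminus\{0\}$ in hand, the representer emerges from a short algebraic identity. For arbitrary $y\in H$ the vector $f(y)\,z-f(z)\,y$ lies in $N$, since $f$ annihilates it. Pairing it with $z$ and using $z\perp N$ gives $f(y)\,||z||_H^2=f(z)\,\langle z,y\rangle_H$, hence
$$f(y)=\left\langle \frac{f(z)}{||z||_H^2}\,z,\;y\right\rangle_H \qquad \text{for all $y\in H$,}$$
so that $x=\bigl(f(z)/||z||_H^2\bigr)\,z$ is the sought element (using symmetry of the real inner product).

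It remains to settle uniqueness and the norm formula. If $x_1$ and $x_2$ both represent $f$, then $\langle x_1-x_2,y\rangle_H=0$ for every $y\in H$; choosing $y=x_1-x_2$ forces $x_1=x_2$. For the norm identity, the Cauchy-Schwarz inequality yields $|f(y)|=|\langle x,y\rangle_H|\le ||x||_H\,||y||_H$, so the supremum is bounded above by $||x||_H$; evaluating the quotient at $y=x$ (when $x\neq 0$) gives exactly $||x||_H$, so the bound is attained. I expect the genuine obstacle to be the projection theorem itself — every remaining step is elementary — and most of the care would go into the parallelogram-law estimate establishing that the minimizing sequence is Cauchy.
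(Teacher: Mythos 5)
Your proof is correct and complete: the kernel/projection-theorem argument you give is the standard proof of the Riesz representation theorem, and each step (the closed proper kernel, the parallelogram-law minimizing-sequence argument for the projection, the algebraic identity $f(y)z - f(z)y \in N$, uniqueness, and the Cauchy--Schwarz norm identity) is carried out correctly. The paper itself does not prove this statement at all --- it cites it as Satz 2.25 of Dobrowolski's textbook --- so your argument is essentially the canonical one behind that reference; the only cosmetic remark is that the supremum in the norm formula must be read over $y \neq 0$, which your case distinction between $x = 0$ and $x \neq 0$ already handles correctly.
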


\noindent \textbf{Proof of the properties of $\mathcal{S}_h$.} We choose an arbitrary
$(\mu_h,\Delta\phi_h,\fatu_h,p_h,r_h)\in X_h$ and consider the linear functionals 
\begin{align}
    f_6:X_h\to\reals, \qquad (\psi_h,w_h,\fatv_h,q_h,s_h) \mapsto f_6(\psi_h,w_h,\fatv_h,q_h,s_h) &= -\big\langle f'_{\mathrm{vex}}\!\left(\phi_h^n+\deltat\Delta\phi_h\right), w_h\big\rangle_{\Omega}, \notag \\[2mm]
    f_{13}:X_h\to\reals, \qquad (\psi_h,w_h,\fatv_h,q_h,s_h) \mapsto f_{13}(\psi_h,w_h,\fatv_h,q_h,s_h) &= \frac{1}{4}\,\big\langle[(\fatu_h + \fatu_h^n)\cdot\gradx]\fatv_h, \fatu_h\big\rangle_\Omega \notag
\end{align}
as examples. Using Hölder's inequality \eqref{hölder}, Korn's inequality \eqref{korn} and Assumption (\hyperlink{A5}{A5}), we deduce that
\begin{align}
    |f_6(\psi_h,w_h,\fatv_h,q_h,s_h)| &\leq ||f'_{\mathrm{vex}}(\phi_h^n+\deltat\Delta\phi_h)||_{L^2(\Omega)}||w_h||_{L^2(\Omega)} \notag \\[2mm]
    &\leq ||f'_{\mathrm{vex}}(\phi_h^n+\deltat\Delta\phi_h)||_{L^2(\Omega)}||(\psi_h,w_h,\fatv_h,q_h,s_h)||_{X_h}, \notag \\[2mm]
    |f_{13}(\psi_h,w_h,\fatv_h,q_h,s_h)| &\leq ||\fatu_h+\fatu_h^n||_{L^4(\Omega)}||\fatu_h||_{L^4(\Omega)}||\gradx\fatv_h||_{L^2(\Omega)} \leq \sqrt{2}\,||\fatu_h+\fatu_h^n||_{L^4(\Omega)}||\fatu_h||_{L^4(\Omega)}||\fatD{\fatv_h}||_{L^2(\Omega)} \notag \\[2mm]
    &\leq (2/\underline{\eta})^{1/2}\,||\fatu_h+\fatu_h^n||_{L^4(\Omega)}||\fatu_h||_{L^4(\Omega)}||(\psi_h,w_h,\fatv_h,q_h,s_h)||_{X_h}. \notag
\end{align}
Consequently, $f_6$ and $f_{13}$ are bounded and hence continuous and the Riesz representation theorem yields the existence and uniqueness of $\mathcal{S}_h^{(6)}(\mu_h,\Delta\phi_h,\fatu_h,p_h,r_h),\mathcal{S}_h^{(13)}(\mu_h,\Delta\phi_h,\fatu_h,p_h,r_h)\in X_h$ such that 
\begin{align}
    f_6(\psi_h,w_h,\fatv_h,q_h,s_h) &= -\big\langle f'_{\mathrm{vex}}\!\left(\phi_h^n+\deltat\Delta\phi_h\right), w_h\big\rangle_{\Omega} = \left\langle\mathcal{S}_h^{(6)}(\mu_h,\Delta\phi_h,\fatu_h,p_h,r_h), (\psi_h,w_h,\fatv_h,q_h,s_h)\right\rangle_{X_h}, \notag \\[2mm]
    f_{13}(\psi_h,w_h,\fatv_h,q_h,s_h) &= \frac{1}{4}\,\big\langle[(\fatu_h + \fatu_h^n)\cdot\gradx]\fatv_h, \fatu_h\big\rangle_\Omega = \left\langle\mathcal{S}_h^{(13)}(\mu_h,\Delta\phi_h,\fatu_h,p_h,r_h), (\psi_h,w_h,\fatv_h,q_h,s_h)\right\rangle_{X_h}\notag 
\end{align}
for all $(\psi_h,w_h,\fatv_h,q_h,s_h)\in X_h$. To prove the continuous dependence of $\mathcal{S}_h^{(6)}, \mathcal{S}_h^{(13)}$ on $\fatx_h = (\mu_h,\Delta\phi_h,\fatu_h,p_h,r_h)$, we choose an arbitrary sequence $\{\fatx_{h,k}\}_{k\,\in\,\naturals} = \{(\mu_{h,k},\Delta\phi_{h,k},\fatu_{h,k},p_{h,k},r_{h,k})\}_{k\,\in\,\naturals}\subset X_h$ with limit $\fatx_h$. Focusing on $\mathcal{S}_h^{(6)}$, we set $z_{h,k}=\phi_h^n + \deltat\Delta\phi_{h,k}$ and $z_{h}=\phi_h^n + \deltat\Delta\phi_{h}$. Using Hölder's inequality \eqref{hölder}, we deduce that
\begin{align}
    \left|\left\langle\mathcal{S}_h^{(6)}(\fatx_{h,k}) - \mathcal{S}_h^{(6)}(\fatx_h), \faty_h\right\rangle_{X_h}\right| 
    &\leq |\Omega|^{1/2}||f'_{\mathrm{vex}}(z_{h,k})-f'_{\mathrm{vex}}(z_h)||_{L^\infty(\Omega)}||w_h||_{L^2(\Omega)} \notag \\[2mm]
    &\leq |\Omega|^{1/2}||f'_{\mathrm{vex}}(z_{h,k})-f'_{\mathrm{vex}}(z_h)||_{C(\overline{\Omega})}||\faty_h||_{X_h} \label{est_Sh6}
\end{align}
for all $\faty_h = (\psi_h,w_h,\fatv_h,q_h,s_h)\in X_h$. Due to the equivalence of all norms in $X_h$ (see Definition and Theorem \ref{thm_equivalence_of_norms}) we have
\begin{align}
    ||z_{h,k}-z_h||_{C(\overline{\Omega})} = 
    \deltat\,||\Delta\phi_{h,k}-\Delta\phi_h||_{C(\overline{\Omega})} \to 0 \label{delta_phi_convergence}
\end{align}
as $k\to\infty$. In particular, there exists a compact set $K_h\subset\reals$ such that
\begin{align}
    z_h(\overline{\Omega})\cup\bigcup_{k\,\in\,\naturals}z_{h,k}(\overline{\Omega})\subset K_h. \notag
\end{align}
Moreover, since $f'_{\mathrm{vex}}$ is continuous on $\reals$, it is uniformly continuous on $K_h$ and thus it follows from \eqref{delta_phi_convergence} that
\begin{align}
    ||f'_{\mathrm{vex}}(z_{h,k})-f'_{\mathrm{vex}}(z_h)||_{C(\overline{\Omega})} \to 0 \notag
\end{align}
as $k\to\infty$. Combining this fact with the Riesz representation theorem and \eqref{est_Sh6}, we deduce that
\begin{align}
    \left|\left|\mathcal{S}_h^{(6)}(\fatx_{h,k}) - \mathcal{S}_h^{(6)}(\fatx_h)\right|\right|_{X_h} &= \sup_{\faty_h\,\in\,X_h\backslash\{\bm{0}\}}\left\{\frac{1}{||\faty_h||_{X_h}}\left|\left\langle\mathcal{S}_h^{(6)}(\fatx_{h,k}) - \mathcal{S}_h^{(6)}(\fatx_h), \faty_h\right\rangle_{X_h}\right|\right\} \notag \\[2mm]
    &\leq |\Omega|^{1/2}||f'_{\mathrm{vex}}(z_{h,k})-f'_{\mathrm{vex}}(z_h)||_{C(\overline{\Omega})} \to 0 \notag
\end{align}
as $k\to\infty$. This proves the continuity of $\mathcal{S}_h^{(6)}$. 
To prove the continuity of $\mathcal{S}_h^{(13)}$, we use Hölder's inequality \eqref{hölder}, Sobolev's inequality \eqref{sobolev}, Poincaré's inequality \eqref{poincare}, Korn's inequality \eqref{korn} and Assumption (\hyperlink{A5}{A5}) to deduce that
\begin{align}
    &\left|\left\langle\mathcal{S}_h^{(13)}(\fatx_{h,k}) - \mathcal{S}_h^{(13)}(\fatx_h), \faty_h\right\rangle_{X_h}\right| \notag \\[2mm]
    &\quad = \frac{1}{4}\,\Big|\big\langle[(\fatu_{h,k}+\fatu_h^n)\cdot\gradx]\fatv_{h}, \fatu_{h,k}\big\rangle_\Omega - \big\langle[(\fatu_{h}+\fatu_h^n)\cdot\gradx]\fatv_{h}, \fatu_{h}\big\rangle_\Omega\Big| \notag \\[2mm]
    &\quad = \frac{1}{4}\,\Big|\big\langle[(\fatu_{h,k}-\fatu_h)\cdot\gradx]\fatv_{h}, \fatu_{h,k}\big\rangle_\Omega + \big\langle(\fatu_{h}^n\cdot\gradx)\fatv_h, \fatu_{h,k}-\fatu_h\big\rangle_\Omega + \big\langle(\fatu_{h,k}\cdot\gradx)\fatv_h, \fatu_{h,k}-\fatu_h\big\rangle_\Omega\Big| \notag \\[2mm]
    &\quad \leq \left(||\fatu_{h,k}||_{L^4(\Omega)} + ||\fatu_{h}^n||_{L^4(\Omega)}\right)||\fatu_{h,k}-\fatu_h||_{L^4(\Omega)}||\gradx\fatv_h||_{L^2(\Omega)} \notag \\[2mm]
    &\quad \leq \left(||\fatu_{h,k}-\fatu_h||_{L^4(\Omega)} + ||\fatu_{h}||_{L^4(\Omega)} + ||\fatu_{h}^n||_{L^4(\Omega)}\right)||\fatu_{h,k}-\fatu_h||_{L^4(\Omega)}||\gradx\fatv_h||_{L^2(\Omega)} \notag \\[2mm]
    &\quad \leq (2/\underline{\eta})C_S^{(1)}C_P^{(2)}\left((2/\underline{\eta})^{1/2}C_S^{(1)}C_P^{(2)}||\fatx_{h,k}-\fatx_h||_{X_h} + ||\fatu_{h}||_{L^4(\Omega)} + ||\fatu_{h}^n||_{L^4(\Omega)}\right)||\fatx_{h,k}-\fatx_h||_{X_h}||\faty_h||_{X_h} \notag 
\end{align}
for all $\faty_h=(\psi_h,w_h,\fatv_h,q_h,s_h)\in X_h$. It follows that
\begin{align}
    \left|\left|\mathcal{S}_h^{(13)}(\fatx_{h,k}) - \mathcal{S}_h^{(13)}(\fatx_h)\right|\right|_{X_h} &= \sup_{\faty_h\,\in\,X_h\backslash\{\bm{0}\}}\left\{\frac{1}{||\faty_h||_{X_h}}\left|\left\langle\mathcal{S}_h^{(13)}(\fatx_{h,k}) - \mathcal{S}_h^{(13)}(\fatx_h), \faty_h\right\rangle_{X_h}\right|\right\} \to 0 \notag
\end{align}
as $k\to\infty$. The remaining terms on the right-hand side of \eqref{reformulated_problem} can be treated similarly. \hfill $\Box$

\section{Schaefer's fixed point theorem}\label{sec_schaefer}
We recall Schaefer's fixed point theorem for finite-dimensional vector spaces. It is an immediate consequence of the general case proven in \citep{Schaefer}. 
\begin{theorem}[Schaefer's fixed point theorem]\label{schaefer}
    Let $X$ be a finite-dimensional vector space and $\mathcal{S}:X\to X$ a continuous mapping. If the set 
    \[\big\{\fatx\in X\,\big|\,\fatx = \lambda \mathcal{S}(\fatx) \;\text{for some $\lambda\in[0,1]$}\big\}\] 
    is bounded, then $\mathcal{S}$ has a fixed point.
\end{theorem}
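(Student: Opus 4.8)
The plan is to reduce the statement to Brouwer's fixed point theorem via a radial truncation of $\mathcal{S}$. Since $X$ is finite-dimensional it is linearly homeomorphic to $\reals^n$ for some $n$, and I may fix an arbitrary norm $||\cdot||$ on $X$; by Definition and Theorem \ref{thm_equivalence_of_norms} the specific choice is immaterial. The hypothesis furnishes a radius $R>0$ such that every $\fatx$ with $\fatx=\lambda\mathcal{S}(\fatx)$ for some $\lambda\in[0,1]$ satisfies $||\fatx||\leq R$. I would then fix any $M>R$ and set $\overline{B}_M=\{\fatx\in X\mid ||\fatx||\leq M\}$, which is a nonempty, compact, convex subset of $X$.

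Next I would introduce the radial retraction $r:X\to\overline{B}_M$ defined by $r(\fatx)=\fatx$ when $||\fatx||\leq M$ and $r(\fatx)=M\fatx/||\fatx||$ otherwise. This map is continuous, so the composition $\mathcal{T}=r\circ\mathcal{S}:\overline{B}_M\to\overline{B}_M$ is a continuous self-map of a compact convex set. Brouwer's fixed point theorem then applies and yields a point $\fatx^\ast\in\overline{B}_M$ with $\mathcal{T}(\fatx^\ast)=\fatx^\ast$.

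It remains to upgrade this to a fixed point of $\mathcal{S}$ itself, which I would do by contradiction. Suppose $||\mathcal{S}(\fatx^\ast)||>M$. Then by definition of $r$ we have $\fatx^\ast=\mathcal{T}(\fatx^\ast)=\lambda\mathcal{S}(\fatx^\ast)$ with $\lambda=M/||\mathcal{S}(\fatx^\ast)||\in(0,1)$, so $\fatx^\ast$ lies in the bounded set of the hypothesis and hence $||\fatx^\ast||\leq R<M$; on the other hand $||\fatx^\ast||=||\mathcal{T}(\fatx^\ast)||=M$, a contradiction. Therefore $||\mathcal{S}(\fatx^\ast)||\leq M$, so $r$ acts as the identity on $\mathcal{S}(\fatx^\ast)$ and $\mathcal{S}(\fatx^\ast)=\mathcal{T}(\fatx^\ast)=\fatx^\ast$, as desired.

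There is no genuine obstacle here, since the substantive content is imported wholesale from Brouwer's theorem (valid precisely because $X$ is finite-dimensional). The only point requiring care is the strict inequality $M>R$: it ensures that a boundary fixed point of the truncated map, where $||\fatx^\ast||=M$, can only arise in the already-resolved case $\mathcal{S}(\fatx^\ast)=\fatx^\ast$. As an alternative, one could simply specialize the general Banach-space version of \citep{Schaefer} to finite dimensions, but the Brouwer-based argument above is entirely self-contained.
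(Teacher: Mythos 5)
Your proof is correct, but it takes a genuinely different route from the paper: the paper gives no argument at all for Theorem \ref{schaefer}, instead noting that the finite-dimensional statement is an immediate consequence of the general (locally convex / Banach space) version proven in \citep{Schaefer}. Your Brouwer-based argument is sound in every step: the radial retraction $r$ onto $\overline{B}_M$ is continuous (the two defining formulas agree on the sphere $||\fatx||=M$), so $\mathcal{T}=r\circ\mathcal{S}$ is a continuous self-map of a compact convex set in a finite-dimensional space and Brouwer applies; and your dichotomy at the fixed point $\fatx^\ast$ is exactly right, with the case $||\mathcal{S}(\fatx^\ast)||>M$ producing $\fatx^\ast=\lambda\mathcal{S}(\fatx^\ast)$ for $\lambda=M/||\mathcal{S}(\fatx^\ast)||\in(0,1)$ and hence the contradiction $M=||\fatx^\ast||\leq R<M$. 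You also correctly isolate the one delicate point, namely that the strict inequality $M>R$ is indispensable (with $M=R$ the contradiction evaporates). What each approach buys: the paper's citation is shortest and emphasizes that the discrete solvability argument is a special case of a standard functional-analytic tool; your proof is self-contained and makes visible why no compactness hypothesis on $\mathcal{S}$ appears in the finite-dimensional statement --- in a Banach space Schaefer's theorem needs $\mathcal{S}$ to be compact so that Schauder's theorem can replace Brouwer's, whereas in finite dimensions continuity alone suffices since bounded sets are relatively compact. Your argument is in fact the standard proof of the general case with Schauder swapped out for Brouwer, so it also clarifies the logical dependence of the result used in Section \ref{sec_proof}.
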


\end{document}